\newtheorem{theorem}{Theorem}[section]
\theoremstyle{definition}
\newtheorem{corollary}[theorem]{Corollary}
\newtheorem{lemma}[theorem]{Lemma}
\newtheorem{proposition}[theorem]{Proposition}
\newtheorem{question}[theorem]{Question}
\newtheorem{definition}[theorem]{Definition}
\newtheorem{example}[theorem]{Example}
\newtheorem{remark}[theorem]{Remark}
\numberwithin{equation}{section}
\def\id{\mathrm{id}}
\def\op{\mathrm{op}}
\def\Conj{\mathrm{Conj}}
\def\Hol{\mathrm{Hol}}
\def\Soc{\mathrm{Soc}}
\def\Aut{\mathrm{Aut}}
\def\Out{\mathrm{Out}}
\def\Ann{\mathrm{Ann}}
\def\Imm{\mathrm{Im}\,}
\def\Inn{\mathrm{Inn}}
\begin{document}

\sloppy

\hfill{16T25, 20N99 (MSC2020)}

\begin{center}
{\Large
Rota---Baxter groups, skew left braces, \\
and the Yang---Baxter equation}

\smallskip

Valeriy G. Bardakov, Vsevolod Gubarev
\end{center}

\begin{abstract}
Braces were introduced by W. Rump in 2006 as an algebraic system
related to the quantum Yang---Baxter equation.
In 2017, L. Guarnieri and L. Vendramin
defined for the same purposes a~more general notion of a~skew left brace.
In 2021, L. Guo, H.~Lang and Y. Sheng gave a definition of what is a Rota---Baxter operator on a group.
We connect these two notions as follows.
It is shown that every Rota---Baxter group gives rise to a skew left brace.
Moreover, every skew left brace can be injectively embedded into a Rota---Baxter group.
When the additive group of a skew left brace is complete, then this brace is induced
by a Rota---Baxter group.
We interpret some notions of the theory of skew left braces in terms of Rota---Baxter operators.

{\it Keywords}:
Rota---Baxter operator, Rota---Baxter group, skew left brace, Yang---Baxter equation.
\end{abstract}

\section{Introduction}

The Yang---Baxter equation from mathematical physics has been studied since 1960s--1970s,
and there exist a~lot of quite different versions of it.
In 1992, V.G. Drinfeld~\cite{Drinfeld} highlighted
the importance of the study of set-theoretical solutions
to the quantum Yang---Baxter equation.

Recall that a~set-theoretical solution to the (quantum) Yang---Baxter equation
on a~set~$X$ is a~bijective map $S\colon X\times X\to X\times X$ such that
$$
(S \times \id) (\id \times S) (S \times \id)
 = (\id \times S) (S \times \id) (\id \times S).
$$
Note the important works of P. Etingof, T. Schedler, and A.~Soloviev~\cite{ESS,Solovev} 
and J.~Lu, M.~Yan and Y.~Zhu~\cite{LYZ00} in this direction. 
Actually, set-theoretic solutions to the Yang---Baxter equation
were studied before V.G.~Drinfeld formulated his question.
In 1980s, D.~Joyce \cite{Joyce} and S.~Matveev~\cite{Matveev} introduced
quandles as invariants of knots and links,
and every quandle gives a~set-theoretic solution to the Yang---Baxter equation.

In 2006, W. Rump introduced~\cite{Rump2,Rump} braces to study involutive set-theoretical solutions
to the Yang---Baxter equation, although the same algebraic objects were already considered by A.G.~Kurosh~\cite{Kurosh} in 1970s. 
In~2014, this notion was reformulated by
F.~Ced\'{o}, E.~Jespers, and J.~Okni\'{n}ski in~\cite{CedoJespersOkninski}.
In 2017, L. Guarnieri and L. Vendramin defined~\cite{GV2017} skew left braces
which give non-involutive solutions to the Yang---Baxter equation.

A set~$A$ with two binary operations $\cdot$ and $\circ$ is called a~skew left brace,
if $(A,\cdot)$ and $(A,\circ)$ are groups and the identity
$$
a \circ (b \cdot  c) =  (a \circ b) \cdot a^{-1} \cdot (a \circ c)
$$
holds for all $a,b,c\in A$.

In the last few years, theory of braces and skew left braces is a~dynamically
developed area with a lot of connections which include knot theory, Hopf---Galois theory,
and radical rings.

The main aim of the work is to study the deep connection between skew left braces 
and Rota---Baxter groups introduced in 2020 by L. Guo, H. Lang, Y.~Sheng~\cite{Guo2020}.
A~Rota---Baxter group is a group~$G$ endowed with a~map $B\colon G\to G$
satisfying the identity
$$
B(g)B(h) = B( g B(g) h B(g)^{-1} ),
$$
where $g,h\in G$.

This notion appeared as a group analogue of Rota---Baxter operators
defined on an algebra.
Rota---Baxter operators on algebras are known since the middle of the previous
century~\cite{Baxter,Tricomi} and they have in turn connections
with mathematical physics (classical and associative Yang---Baxter equations),
number theory, operad theory, Hopf algebras,
combinatorics et cetera, see the monograph~\cite{GuoMonograph}.

After the initial work~\cite{Guo2020},
the study of Rota---Baxter groups have been continued in~\cite{BG,CS2022,Goncharov2020,JSZ}.

We show that given a Rota---Baxter group~$(G,\cdot,B)$ one obtains a~skew left brace
$(G,\cdot,\circ_B)$, where $x \circ_B y = xB(x)yB(x)^{-1}$.
Let $(G,\cdot,\circ)$ be a~skew left brace satisfying the condition that
the~group $(G,\cdot)$ is complete,
then there exists a Rota---Baxter operator on $(G,\cdot)$ such that
$x \circ y = x \circ_B y$.
Moreover, we prove that every skew left brace can be (injectively) embedded into
a~Rota---Baxter group. The proof and the ideology which is behind it come
from the analogous result stated for Rota---Baxter algebras
and so called postalgebras~\cite{Embedding}.
Further, we apply the enveloping Rota---Baxter group to define left center and strong left nilpotency of skew left braces.

We apply the constructions of Rota---Baxter groups~\cite{BG,Guo2020}
to provide both known and new constructions of skew left braces.
Recently C. Tsang studied~\cite{Tsang2} skew left braces
$(A,\cdot,\circ)$ with $(A,\cdot) = G^n$ and $(A,\circ)\cong G^n$,
where $G$ is a~non-abelian simple finite group.
We discuss the connection between the results of C. Tsang and the construction of
Rota---Baxter operators on $G^n$ from~\cite{BG}.

We state that differentiation (in the definite sense) of a skew left brace which both groups are Lie ones
is a~post-Lie algebra.
It is shown how the free skew left brace comes from the free Rota---Baxter group.

We get the explicit formula of the non-degenerate
set-theoretical solution to the Yang---Baxter equation
arisen from a Rota---Baxter group~$(G,\cdot,B)$ via its induced skew left brace,
$$
S \colon G \times G \to G\times G,\quad
S(a, b) = (\lambda_a(b), a^{\lambda_a(b)B(\lambda_a(b))}),
$$
where $\lambda_a(b) = B(a)b B(a)^{-1}$ for $a,b \in G$.

\newpage

Let us give short {\bf outline} of the work.
In~\S2, we state the required preliminaries on Rota---Baxter groups
and skew left braces.

In~\S3, we study the connection between Rota---Baxter groups
and skew left braces. 
In~\S4, we apply constructions of Rota---Baxter groups
to get skew left braces.

In~\S5, some notions of skew left brace theory is interpreted via Rota---Baxter groups.

In~\S6, we consider the Yang---Baxter equation and solutions to it concerned
skew left braces and Rota---Baxter groups.

In~\S7, we introduce skew left multibraces motivated by the fact that
given a Rota---Baxter group $(G,\cdot,B)$, we get a new Rota---Baxter group
structure~$(G,\circ,B)$ on the same set~$G$.

\section{Preliminaries}

Let us recall the definition of Rota---Baxter operator on an algebra.

Let $A$ be an algebra over a field~$\Bbbk$. A linear operator $R$ on $A$ is called
a Rota---Baxter operator of weight~$\lambda\in \Bbbk$ if
\begin{equation*}\label{RBAlgebra}
R(x)R(y) = R( R(x)y + xR(y) + \lambda xy )
\end{equation*}
for all $x,y\in A$.
An algebra endowed with a Rota---Baxter operator
is called a~Rota---Baxter algebra (see, for example, \cite{GuoMonograph}).

Let us consider an analogue of Rota---Baxter operator of weight~$\pm1$ on a group.

\subsection{Rota---Baxter operators on groups}
\begin{definition}[\cite{Guo2020}]
Let $G$ be a group.

a) A map $B\colon G\to G$ is called a {\it Rota---Baxter operator} of weight~1 if
\begin{equation}\label{RB}
B(g)B(h) = B( g B(g) h B(g)^{-1} )
\end{equation}
for all $g,h\in G$.

b)  A map $C\colon G\to G$ is called a {\it Rota---Baxter operator} of weight~$-1$ if
$$
C(g) C(h) = C( C(g) h C(g)^{-1} g )
$$
for all $g,h\in G$.
\end{definition}

There is a~bijection between Rota---Baxter operators
of weights~1 and $-1$ on a~group~$G$. We will call Rota---Baxter
operators of weight~1 simply Rota---Baxter operators (RB-operators).
A group endowed with a Rota---Baxter operator is called
a~{\it Rota---Baxter group} (RB-group).

\begin{remark}
Note that the identity~\eqref{RB} has already appeared in the context of skew left braces but in quite difference sense.
In~\cite{CCD} (see also~\cite[Lemma\,1.1.17]{Bach16}), 
the authors defined so called gamma function on a finite group~$G$ as the map $\gamma\colon G\to \Aut(G)$ satisfying the equality
$\gamma(g^{\gamma(h)}h) = \gamma(g)\gamma(h)$
for all $g,h\in G$.
The difference is that an RB-operator acts from $G$ into~$G$ but not in $\Aut(G)$, and this point is important.
\end{remark}

A group $G$ is called factorizable if $G = HL$ for some its subgroups $H$ and $L$.
The expression $G = HL$ is called a factorization of $G$.
If additionally $H\cap L = \{e\}$, then such factorization is called exact.

\begin{example}[\cite{Guo2020}]\label{RBexamples}
Let $G$ be a group. Then

a) the map $B_0(g) = e$ is an RB-operator on $G$,

b) the map $B_{-1}(g) = g^{-1}$ is an RB-operator on $G$,

c) given an exact factorization $G = HL$,
a map $B\colon G\to G$ defined as follows,
$B(hl) = l^{-1}$ is a Rota---Baxter operator on~$G$.
\end{example}

We call 
the RB-operator arisen from an exact factorization (Example~\ref{RBexamples}c) as {\it splitting RB-operator}.

The following result can be useful in attempt to construct RB-operators $B$ on a given group.

\begin{lemma}[\cite{Guo2020}] \label{lem:tilda}
Let $B$ be a Rota---Baxter operator on a group~$G$.
Then $\widetilde{B}(g) = g^{-1}B(g^{-1})$ is also a Rota---Baxter operator on a group~$G$.
\end{lemma}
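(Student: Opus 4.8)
The plan is to verify the defining identity \eqref{RB} directly for the map $\widetilde{B}$, using that $B$ itself satisfies \eqref{RB}. Writing $\widetilde{B}(g) = g^{-1}B(g^{-1})$, I must show that
$$\widetilde{B}(g)\widetilde{B}(h) = \widetilde{B}\bigl(g\,\widetilde{B}(g)\,h\,\widetilde{B}(g)^{-1}\bigr)$$
for all $g,h\in G$. The left-hand side is immediately $g^{-1}B(g^{-1})\,h^{-1}B(h^{-1})$, so the entire task reduces to simplifying the right-hand side down to this same word.

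First I would simplify the argument $w := g\,\widetilde{B}(g)\,h\,\widetilde{B}(g)^{-1}$. Since $\widetilde{B}(g) = g^{-1}B(g^{-1})$, the leading factor $g$ cancels the $g^{-1}$, while $\widetilde{B}(g)^{-1} = B(g^{-1})^{-1}g$; collecting the terms yields
$$w = B(g^{-1})\,h\,B(g^{-1})^{-1}\,g.$$
Next I would compute $w^{-1} = g^{-1}B(g^{-1})\,h^{-1}\,B(g^{-1})^{-1}$ and substitute it into $\widetilde{B}(w) = w^{-1}B(w^{-1})$, obtaining
$$\widetilde{B}(w) = g^{-1}B(g^{-1})\,h^{-1}\,B(g^{-1})^{-1}\cdot B\bigl(g^{-1}B(g^{-1})\,h^{-1}\,B(g^{-1})^{-1}\bigr).$$

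The key step is to recognize that the argument of the inner $B$ is exactly $a\,B(a)\,b\,B(a)^{-1}$ with $a = g^{-1}$ and $b = h^{-1}$, that is, precisely the right-hand argument of \eqref{RB} for $B$. Applying \eqref{RB} rewrites that inner term as $B(g^{-1})B(h^{-1})$, after which the trailing factor $B(g^{-1})^{-1}$ cancels the leading $B(g^{-1})$ of this product, leaving $g^{-1}B(g^{-1})\,h^{-1}B(h^{-1})$, which matches the left-hand side. There is no genuine obstacle here: the whole proof is a single application of \eqref{RB} wrapped in bookkeeping, and the only point requiring care is tracking the cancellations of the $g^{\pm1}$ and $B(g^{-1})^{\pm1}$ factors so that the inner argument lands exactly in RB-form.
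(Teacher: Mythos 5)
Your proof is correct, and every step checks out: $w=B(g^{-1})\,h\,B(g^{-1})^{-1}g$, hence $w^{-1}=g^{-1}B(g^{-1})\,h^{-1}B(g^{-1})^{-1}$, which is exactly $aB(a)bB(a)^{-1}$ with $a=g^{-1}$, $b=h^{-1}$, so \eqref{RB} collapses $\widetilde{B}(w)$ to $g^{-1}B(g^{-1})h^{-1}B(h^{-1})=\widetilde{B}(g)\widetilde{B}(h)$. The paper itself states Lemma~\ref{lem:tilda} only with a citation to \cite{Guo2020} and offers no proof (it verifies only the involutivity $\widetilde{\widetilde{B}}=B$ afterwards), and your direct verification is precisely the standard argument behind the cited result.
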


In particular, $\widetilde{B_0} = B_{-1}$. 
We have the equality $\widetilde{\widetilde{B}} = B$, since
$$
\widetilde{\widetilde{B}}(g)
 = g^{-1}{\widetilde{B}}(g^{-1})
 = g^{-1}(g^{-1})^{-1}B((g^{-1})^{-1})
 = B(g).
$$

The following observation is very important when one is interested on the classification of
all RB-operators on a given group.

\begin{lemma}[\cite{BG}] \label{lem:Aut}
Let $B$ be a Rota---Baxter operator on a group~$G$.
Let $\varphi$ be an automorphism of $G$. Then $B^{(\varphi)} = \varphi^{-1}B\varphi$
is a Rota---Baxter operator on a group~$G$.
\end{lemma}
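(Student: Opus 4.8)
The plan is to verify directly that $B^{(\varphi)} = \varphi^{-1}B\varphi$ satisfies the defining identity~\eqref{RB}. Since that identity involves nothing but the group multiplication and the operator itself, and since both $\varphi$ and $\varphi^{-1}$ are homomorphisms, the statement should follow simply by transporting the identity for $B$ along $\varphi$. Throughout I would write $B^{(\varphi)}(g) = \varphi^{-1}(B(\varphi(g)))$.

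First I would expand the left-hand side of~\eqref{RB} for $B^{(\varphi)}$. Using that $\varphi^{-1}$ is a homomorphism,
$$
B^{(\varphi)}(g)B^{(\varphi)}(h)
 = \varphi^{-1}(B(\varphi(g)))\,\varphi^{-1}(B(\varphi(h)))
 = \varphi^{-1}\bigl(B(\varphi(g))B(\varphi(h))\bigr).
$$
Applying the Rota---Baxter identity for $B$ to the pair $\varphi(g),\varphi(h)$ rewrites the inner product as $B\bigl(\varphi(g)B(\varphi(g))\varphi(h)B(\varphi(g))^{-1}\bigr)$.

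Next I would compute the right-hand side. Setting $u = gB^{(\varphi)}(g)hB^{(\varphi)}(g)^{-1}$ and applying the homomorphism $\varphi$, the relation $\varphi\circ\varphi^{-1} = \id$ yields $\varphi(B^{(\varphi)}(g)) = B(\varphi(g))$, whence $\varphi(u) = \varphi(g)B(\varphi(g))\varphi(h)B(\varphi(g))^{-1}$. Therefore $B^{(\varphi)}(u) = \varphi^{-1}(B(\varphi(u)))$ coincides exactly with the expression obtained for the left-hand side, which establishes the identity.

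I do not expect any genuine obstacle here: the content of the lemma is simply that~\eqref{RB} is \emph{natural} with respect to group automorphisms. The only point requiring a little care is the bookkeeping of inverses---making sure that $\varphi^{-1}$ is pulled across the \emph{entire} product on the left, and that $B^{(\varphi)}(g)^{-1}$ is treated as $\varphi^{-1}(B(\varphi(g))^{-1})$ when $\varphi$ is applied to $u$---but this is routine.
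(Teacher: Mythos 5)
Your verification is correct and is exactly the expected argument: the paper itself cites~\cite{BG} for this lemma without reproducing a proof, and your transport-along-$\varphi$ computation is the same one the paper carries out in~\S4 when it checks $\varphi(g\circ_{B^{(\varphi)}}h) = \varphi(g)\circ_B\varphi(h)$. In particular your key step $\varphi(u) = \varphi(g)B(\varphi(g))\varphi(h)B(\varphi(g))^{-1}$, followed by applying identity~\eqref{RB} to the pair $\varphi(g),\varphi(h)$, is precisely how the two sides are matched there, so there is nothing to add.
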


Given a RB-group $(G,B)$, we may define a new binary operation $\circ \colon G\to G$~\cite{Guo2020}.

\begin{proposition}[\cite{Guo2020}]\label{prop:Derived}
Let $(G,B)$ be a Rota---Baxter group.

a) The pair $(G,\circ)$, with the multiplication
\begin{equation}\label{R-product}
g\circ h = gB(g)hB(g)^{-1},
\end{equation}
where $g,h\in G$, is also a group.

b) The operator $B$ is a Rota---Baxter operator on the group $(G, \circ)$.

c) The map $B\colon (G, \circ )\to (G, \cdot)$
is a homomorphism of Rota---Baxter groups.
\end{proposition}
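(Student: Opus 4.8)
The plan is to extract from the Rota--Baxter identity~\eqref{RB} the single structural fact that drives all three parts: the map~$B$ carries $\circ$-products to $\cdot$-products. Indeed, comparing the definition~\eqref{R-product} of $g\circ h = gB(g)hB(g)^{-1}$ with~\eqref{RB} gives at once
\[
B(g\circ h) = B\big(gB(g)hB(g)^{-1}\big) = B(g)B(h)
\]
for all $g,h\in G$. I would establish this identity first and then use it repeatedly, as it says precisely that $B\colon(G,\circ)\to(G,\cdot)$ is a group homomorphism (and hence sends $\circ$-inverses to $\cdot$-inverses).

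For part~(a) I would verify the group axioms for $(G,\circ)$ directly. Associativity: using $g\circ h = g\,B(g)\,h\,B(g)^{-1}$ together with $B(g\circ h)=B(g)B(h)$, both $(g\circ h)\circ k$ and $g\circ(h\circ k)$ reduce to the common word $gB(g)hB(h)kB(h)^{-1}B(g)^{-1}$, so they coincide. For the unit, setting $g=h=e$ in~\eqref{RB} yields $B(e)^2=B(e)$, whence $B(e)=e$ by cancellation in $(G,\cdot)$; then $g\circ e = gB(g)B(g)^{-1}=g$ and $e\circ g = B(e)gB(e)^{-1}=g$, so $e$ is a two-sided $\circ$-unit. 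For inverses I would propose $\bar g := B(g)^{-1}g^{-1}B(g)$ and check $g\circ\bar g = gB(g)\,\bar g\,B(g)^{-1}=e$ in one line; the homomorphism fact then forces $B(g)B(\bar g)=B(g\circ\bar g)=B(e)=e$, i.e.\ $B(\bar g)=B(g)^{-1}$, and substituting this into $\bar g\circ g = \bar g\,B(\bar g)\,g\,B(\bar g)^{-1}$ collapses it to $e$. Thus $\bar g$ is a two-sided inverse and $(G,\circ)$ is a group.

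Parts~(b) and~(c) then become almost formal. To prove~(b), that $B$ is a Rota--Baxter operator on $(G,\circ)$, I would expand the right-hand side $B\big(x\circ B(x)\circ y\circ \overline{B(x)}\big)$ by distributing $B$ over $\circ$ via the homomorphism property, getting $B(x)\,B(B(x))\,B(y)\,B(B(x))^{-1}$; by the definition~\eqref{R-product} this is exactly $B(x)\circ B(y)$, which is the asserted identity. For~(c), a morphism of Rota--Baxter groups is a group homomorphism intertwining the two operators; here the map is $B$ itself, with the \emph{same} operator $B$ on source and target, so the intertwining condition reads $B(B(x))=B(B(x))$ and holds trivially, while the homomorphism property was already noted.

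The only genuinely substantive point is recognizing the identity $B(g\circ h)=B(g)B(h)$ at the outset, since it converts the potentially messy verification of~(b) — where a naive expansion of $x\circ B(x)\circ y\circ\overline{B(x)}$ produces long words in $B(g)$ and $B(B(g))$ — into a two-line argument. After that observation everything is routine computation inside $(G,\cdot)$, and I expect no real obstacle; the main care needed is simply tracking $\circ$-inverses against $\cdot$-inverses consistently.
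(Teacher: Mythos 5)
Your proof is correct, and there is nothing in the paper to compare it against line by line: the paper states Proposition~\ref{prop:Derived} with the citation \cite{Guo2020} and gives no proof of its own. Your central observation --- that the Rota--Baxter identity~\eqref{RB}, read against the definition~\eqref{R-product}, says precisely $B(g\circ h)=B(g)B(h)$, so that $B\colon(G,\circ)\to(G,\cdot)$ is a homomorphism from the outset --- is exactly the mechanism behind the original argument in the cited reference, and all the subsequent verifications (associativity collapsing to the common word $gB(g)hB(h)kB(h)^{-1}B(g)^{-1}$, the idempotency argument giving $B(e)=e$, the inverse $B(g)^{-1}g^{-1}B(g)$ with $B(\bar g)=B(g)^{-1}$ deduced from the homomorphism property, and the reduction of the weight-1 identity on $(G,\circ)$ to $B(x)B(B(x))B(y)B(B(x))^{-1}$ on both sides) check out. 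One small remark: in part~(c) you correctly note the intertwining condition is the tautology $B(B(x))=B(B(x))$ because the same operator $B$ serves on source and target; it is worth saying explicitly, as you implicitly do, that this uses part~(b) to know $B$ is a legitimate Rota--Baxter structure on the source $(G,\circ)$.
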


\subsection{Skew left braces}
Skew left braces were introduced in \cite{GV2017} for studying non-involutive
set-theoretic solutions to the Yang---Baxter equation.
In this section we recall definitions and some known facts (see~\cite{GV2017,SV}).

A bigroupoid $(A,\cdot,\circ)$ is a set with two binary algebraic operations.
A bigroupoid $(A,\cdot,\circ)$ is called a~{\it skew left brace},
if $A^{(\cdot)} := (A,\cdot)$ and
$A^{(\circ)} := (A,\circ)$ are groups which we will call additive and multiplicative,
respectively, and
\begin{equation}\label{LSbrace}
a \circ (b \cdot  c) =  (a \circ b) \cdot a^{-1} \cdot (a \circ c)
\end{equation}
for all $a,b,c\in A$, where $a^{-1}$ denotes the additive inverse of~$a$.
For simplicity we will denote a~skew left brace $(A,\cdot,\circ)$ by~$A$.

A bigroupoid $(A,\cdot,\circ)$ is called a~{\it skew right brace},
if $A^{(\cdot)} = (A,\cdot)$ and $A^{(\circ)} = (A,\circ)$ are groups, and
\begin{equation}\label{RightBrace}
(b \cdot c) \circ a  =  (b \circ a) \cdot a^{-1} \cdot (c\circ a)
\end{equation}
holds for all $a,b,c\in A$.

If $(A,\cdot,\circ)$ is both skew left and right brace, 
then we say that $A$ is a \emph{two-sided skew brace}.

A skew left brace~$A$ is said to be a~\emph{brace} if $A^{(\cdot)}$ is an abelian group.

Given a~skew left brace $(G, \cdot, \circ)$, we call it trivial, if
either $\cdot = \circ$ or $x\cdot y = y\circ x$ for all $x,y\in G$.

Let $(G, \cdot, \circ)$ be a~skew left brace.
Then, as it is proved in~\cite{GV2017}, the map
$$
\lambda \colon (G, \circ) \to \Aut (G, \cdot),\quad a \mapsto \lambda_a,
$$
where $\lambda_a(b) = a^{-1} (a \circ b)$, is a~group homomorphism.
The inverse $a^{\circ(-1)}$ of $a \in G$ with respect to $\circ$ is given by
$\lambda_a^{-1} (a^{-1})$.
The map $\lambda_a \colon G \to G$ is bijective with inverse
\begin{equation} \label{lambda-inverse}
\lambda_a^{-1} \colon G \to G,\quad b \mapsto a^{\circ(-1)} \circ (ab).
\end{equation}
It follows that
$a \circ b = a \lambda_a(b)$, $ab = a \circ  \lambda_a^{-1}(b)$.
Hence, any skew left brace defines a~group homomorphism~$\lambda$.
Conversely, if we have a~group homomorphism~$\lambda$, then we can construct
a~skew left brace, see~\cite[Proposition~1.9]{GV2017}.

Let $G$ be a group. The holomorph of $G$ is the group
$\Hol(G) := \Aut(G)\ltimes G$, in which the product is given by
$$
(f,a)(g,b)=(fg,af(b))
$$
for all $a,b \in G$ and $f,g \in \Aut(G)$.
Any subgroup $H$ of $\Hol(G)$ acts on $G$ as follows
$$
(f,a) \cdot b = af(b),\quad a,b\in G,\ f \in \Aut (G).
$$
A~subgroup~$H$ of $\Hol(G)$ is said to be {\it regular} if for each $a \in G$
there exists a unique $(f,x) \in H$ such that $xf(a)=e$.
It is equivalent to the fact that the action of~$H$ on~$G$ is free and transitive.
Let $\pi_2\colon \Hol(G)\to G$ denote the projection map
from $\Hol(G)$ onto the second component~$G$.

The following theorem from~\cite{GV2017} provides a~connection
between skew left braces and regular subgroups.

\begin{theorem}[\cite{GV2017}]\label{gv2017}
Let $(G, \cdot, \circ)$ be a skew left brace. Then
$\{(\lambda_a, a) \mid a \in G\}$ is a regular subgroup of $\Hol(G)$,
where $\lambda_a(b) = a^{-1}(a \circ b)$ for all $b\in A$.

Conversely, if $(G, \cdot)$ is a group and~$H$ is a~regular subgroup of
$\Hol(G^{(\cdot)})$, then $(G, \cdot, \circ)$ is a skew left brace such that
$(G, \circ) \cong H$, where $a \circ b = a f(b)$
with $(\pi_2|_H)^{-1}(a)=(f, a)\in H$.
\end{theorem}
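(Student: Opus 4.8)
The plan is to establish both directions by directly unwinding the definitions, relying on the two facts recorded above: that $\lambda\colon(G,\circ)\to\Aut(G,\cdot)$ is a group homomorphism and that $a\circ b=a\lambda_a(b)$.

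For the forward direction, I would first show that $S:=\{(\lambda_a,a)\mid a\in G\}$ is a subgroup of $\Hol(G)$, in fact that $a\mapsto(\lambda_a,a)$ is an isomorphism from $(G,\circ)$ onto $S$. Multiplying in $\Hol(G)$,
$$
(\lambda_a,a)(\lambda_b,b)=(\lambda_a\lambda_b,\,a\lambda_a(b))=(\lambda_{a\circ b},\,a\circ b),
$$
where the second equality uses $\lambda_a\lambda_b=\lambda_{a\circ b}$ and $a\lambda_a(b)=a\circ b$. This gives closure and the homomorphism property at once; since the common identity $e$ of the two group structures yields the unit $(\lambda_e,e)=(\id,e)$ and inverses are $(\lambda_{a^{\circ(-1)}},a^{\circ(-1)})$, the set $S$ is a subgroup isomorphic to $(G,\circ)$. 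For regularity I would evaluate the action on $e$: since $\lambda_a(e)=e$, we get $(\lambda_a,a)\cdot e=a\lambda_a(e)=a$, so $(\lambda_a,a)\mapsto(\lambda_a,a)\cdot e=a$ is a bijection $S\to G$. This is exactly simple transitivity, i.e.\ $S$ is regular.

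For the converse, the decisive preliminary step is to observe that regularity forces $\pi_2|_H\colon H\to G$ to be a bijection. Indeed, for $(f,x)\in H$ one has $(f,x)\cdot e=xf(e)=x$ because $f\in\Aut(G,\cdot)$ fixes $e$; hence $\pi_2(h)=h\cdot e$ for every $h\in H$, and simple transitivity of the action says precisely that each $x\in G$ equals $h\cdot e$ for a unique $h\in H$. This legitimizes the notation $(\pi_2|_H)^{-1}(a)=(f_a,a)$ and the definition $a\circ b=af_a(b)$. I would then transport the group law of $H$ along $\pi_2|_H$: from $(f_a,a)(f_b,b)=(f_af_b,\,af_a(b))$, closure of $H$ and uniqueness of the second coordinate give $(f_af_b,\,a\circ b)=(f_{a\circ b},\,a\circ b)$, so $\pi_2(h_1h_2)=\pi_2(h_1)\circ\pi_2(h_2)$. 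Being a bijection that intertwines the product of $H$ with $\circ$, the map $\pi_2|_H$ exhibits $(G,\circ)$ as a group isomorphic to $H$, with identity $e$.

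It then remains to verify the skew left brace identity \eqref{LSbrace}, and here the automorphism property of $f_a$ does all the work:
$$
a\circ(b\cdot c)=af_a(b\cdot c)=af_a(b)f_a(c),
$$
whereas
$$
(a\circ b)\cdot a^{-1}\cdot(a\circ c)=af_a(b)\cdot a^{-1}\cdot af_a(c)=af_a(b)f_a(c),
$$
so the two sides agree. The individual computations are short; the only genuine content, and the step I would take care to state cleanly, is the identification of regularity with the bijectivity of $\pi_2|_H$, since the well-definedness of $\circ$ and the isomorphism $(G,\circ)\cong H$ both rest on it.
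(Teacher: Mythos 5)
The paper does not prove this theorem---it is quoted from~\cite{GV2017} without proof---so there is no internal argument to compare against; your proof is correct and is essentially the standard one from the cited source: transport of structure along $\pi_2|_H$, with regularity correctly identified with bijectivity of the orbit map $h\mapsto h\cdot e$ (your forward direction likewise reduces to $(\lambda_a,a)\mapsto a$ being that orbit map). The only facts you import---that $\lambda\colon(G,\circ)\to\Aut(G,\cdot)$ is a homomorphism with $a\circ b=a\lambda_a(b)$, and that the identities of the two group structures coincide---are recorded in the paper's preliminaries, so the argument is complete as written.
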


\section{Rota---Baxter operators and skew left braces}

The next statement gives a connection between RB-groups and skew left braces.

\begin{proposition}\label{RBToSLB}
Let $(G,\cdot)$ be a group and $B \colon G \to G$ be a Rota---Baxter operator. Put
$x \circ_B y = xB(x)yB(x)^{-1}$. Then $(G, \cdot, \circ_B)$ is a skew left brace.
\end{proposition}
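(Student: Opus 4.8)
The plan is to check the two defining requirements of a skew left brace separately: first that $(G,\circ_B)$ is a group, and then that the compatibility identity \eqref{LSbrace} holds, with the additive group taken to be $(G,\cdot)$ and the multiplicative group to be $(G,\circ_B)$.

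For the group structure I note that the operation $\circ_B$ coincides exactly with the operation $\circ$ of Proposition~\ref{prop:Derived}, namely $g\circ h = gB(g)hB(g)^{-1}$. Hence part~(a) of that proposition already guarantees that $(G,\circ_B)$ is a group, and in particular associativity—the one delicate axiom, where the Rota---Baxter identity~\eqref{RB} is genuinely used—comes for free. This step therefore requires no new work beyond citing the proposition.

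The remaining task is to verify~\eqref{LSbrace}. I would expand the left side directly from the definition to obtain
$$
a \circ_B (b\cdot c) = aB(a)\,(bc)\,B(a)^{-1},
$$
and then expand the right side as
$$
(a\circ_B b)\cdot a^{-1}\cdot (a\circ_B c) = aB(a)bB(a)^{-1}\cdot a^{-1}\cdot aB(a)cB(a)^{-1}.
$$
Here the middle block $B(a)^{-1}\,a^{-1}\,a\,B(a)$ collapses to the identity, leaving $aB(a)\,(bc)\,B(a)^{-1}$ as well, so both sides agree.

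The main point to emphasize is that the brace identity~\eqref{LSbrace} is a purely formal consequence of the conjugation-twisted shape of~$\circ_B$ and does not invoke the Rota---Baxter axiom at all; the Rota---Baxter condition enters only through Proposition~\ref{prop:Derived} to make $(G,\circ_B)$ associative. Consequently there is no real obstacle once that proposition is in hand, and the verification of~\eqref{LSbrace} reduces to the short cancellation above.
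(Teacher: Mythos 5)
Your proposal is correct and matches the paper's proof essentially verbatim: both cite Proposition~\ref{prop:Derived} for the group structure of $(G,\circ_B)$ and then verify~\eqref{LSbrace} by the same cancellation of the middle block $B(a)^{-1}a^{-1}aB(a)$. Your closing remark—that the Rota---Baxter identity is needed only for associativity of $\circ_B$ and not for the brace identity itself—is a fair observation that the paper leaves implicit.
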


\begin{proof}
By~Proposition~\ref{prop:Derived}, $(G,\circ_B)$ is a group.
It remains to check~\eqref{LSbrace},
$$
(g\circ_B h)g^{-1}(g\circ_B f)
 = gB(g)h\underline{B(g)^{-1}g^{-1}gB(g)}fB(g)^{-1}
 = gB(g)hfB(g)^{-1}
 = g\circ_B(hf). \qedhere
$$
\end{proof}

\begin{corollary}
Let $(G,\cdot,B)$ be a~Rota---Baxter group. If $\Imm B\subset Z(G)$,
then the skew left brace $(G, \cdot, \circ_B)$ is the trivial one.
\end{corollary}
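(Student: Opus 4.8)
The plan is to show that under the hypothesis $\Imm B \subset Z(G)$, the operation $\circ_B$ collapses to one of the two trivial cases in the definition, namely $x \cdot y = y \circ_B x$ for all $x,y \in G$. I would start directly from the defining formula $x \circ_B y = xB(x)yB(x)^{-1}$ and use centrality to move the factors $B(x)$ and $B(x)^{-1}$ freely past the other elements.

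First I would observe that since $B(x) \in Z(G)$, the conjugating factors commute with everything, so $B(x) y B(x)^{-1} = y$ for every $y$. Substituting this into the definition immediately gives $x \circ_B y = xy$, so that $\circ_B = \cdot$. This already matches the first alternative in the definition of a trivial skew left brace (the case $\cdot = \circ$), and so the brace $(G,\cdot,\circ_B)$ is trivial by definition. I would write this as the short display
\[
x \circ_B y = xB(x)yB(x)^{-1} = x\,B(x)B(x)^{-1}\,y = xy,
\]
where the middle equality uses $B(x)\in Z(G)$.

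I do not anticipate a genuine obstacle here, since the whole argument is a one-line consequence of centrality; the only point requiring a moment of care is to confirm that the resulting brace indeed falls under the paper's definition of trivial, which explicitly allows either $\cdot = \circ$ or $x\cdot y = y \circ x$. The computation lands in the first of these two cases, so the conclusion follows immediately with no further verification needed.
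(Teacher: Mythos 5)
Your proof is correct and is precisely the intended argument: the paper states this corollary without proof as an immediate consequence of Proposition~\ref{RBToSLB}, and your one-line computation $x \circ_B y = xB(x)yB(x)^{-1} = xy$ (using $B(x)\in Z(G)$) showing $\circ_B = \cdot$ is exactly it. Note only a slip in your opening sentence, which announces the second alternative $x\cdot y = y\circ_B x$, whereas your computation in fact lands in the first alternative $\cdot = \circ_B$, as you yourself correctly observe afterwards.
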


We denote the skew left brace $(G, \cdot, \circ_B)$ by $G(B)$.
Using the operators $B_0$ and $B_{-1}$
we can construct the following skew left braces:

\begin{example}
a) $G(B_0)$ is a trivial skew left brace,

b) $G(B_{-1})$ is also a trivial skew left brace, since
$x \circ y = y \cdot x$ for all $x, y \in G$.
\end{example}

For RB-operators of weight $-1$ we can prove the following analogue of Proposition~\ref{RBToSLB}.

\begin{proposition}
Let $(G, \cdot)$ be a group and $C \colon G \to G$
be a~Rota---Baxter operator of weight $-1$.
Put $x \circ_C y = C(x)yC(x)^{-1}x$.
Then $(G, \cdot, \circ_C)$ is a~skew left brace.
If $(G, \cdot)$ is an abelian group, then
$(G, \cdot, \circ_C)$ is a trivial skew left brace.
\end{proposition}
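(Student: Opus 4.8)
The plan is to avoid reproving the group axioms and the brace identity from scratch, and instead to reduce the statement to Proposition~\ref{RBToSLB} by making explicit the bijection between Rota--Baxter operators of weight~$1$ and of weight~$-1$ announced in the remark following the definition. The candidate for this bijection is suggested by comparing the two circle products: since $x\circ_B y = xB(x)yB(x)^{-1}$ and $x\circ_C y = C(x)yC(x)^{-1}x$, these agree as soon as $C(x) = xB(x)$. So first I would \emph{define} $B(g) := g^{-1}C(g)$ and aim to show that $B$ is a genuine Rota--Baxter operator of weight~$1$.

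The verification is a single manipulation of the weight~$-1$ identity. Recording $gB(g) = C(g)$ and $B(g)^{-1} = C(g)^{-1}g$, one gets $gB(g)hB(g)^{-1} = C(g)hC(g)^{-1}g$, and hence
$$
B\bigl(gB(g)hB(g)^{-1}\bigr)
 = \bigl(C(g)hC(g)^{-1}g\bigr)^{-1} C\bigl(C(g)hC(g)^{-1}g\bigr).
$$
Now applying the weight~$-1$ relation $C(C(g)hC(g)^{-1}g) = C(g)C(h)$ and simplifying should collapse the right-hand side to $g^{-1}C(g)h^{-1}C(h) = B(g)B(h)$, which is exactly~\eqref{RB}. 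This computation is the only real content of the proof, and I expect it to be the main obstacle; everything afterwards is formal.

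Once $B$ is known to satisfy~\eqref{RB}, the identity $C(x) = xB(x)$ gives $x\circ_C y = xB(x)yB(x)^{-1} = x\circ_B y$, so $(G,\cdot,\circ_C)$ and $(G,\cdot,\circ_B)$ are literally the same bigroupoid; the latter is a skew left brace by Proposition~\ref{RBToSLB}, which proves the first assertion. For the second assertion, I would simply specialize to the abelian case: then every factor commutes, so $x\circ_C y = C(x)C(x)^{-1}yx = yx = xy$, i.e.\ $\circ_C$ coincides with $\cdot$, which is precisely one of the two ways in which a skew left brace was declared trivial.
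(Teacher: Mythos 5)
Your proposal is correct, and the one step you flagged as the main obstacle does close. With $B(g):=g^{-1}C(g)$ one indeed has $gB(g)hB(g)^{-1}=C(g)hC(g)^{-1}g$, and then
$$
B\bigl(gB(g)hB(g)^{-1}\bigr)
 = \bigl(C(g)hC(g)^{-1}g\bigr)^{-1}\,C\bigl(C(g)hC(g)^{-1}g\bigr)
 = g^{-1}C(g)h^{-1}C(g)^{-1}\cdot C(g)C(h)
 = B(g)B(h),
$$
which is exactly~\eqref{RB}; the rest of your reduction ($\circ_C=\circ_B$, triviality in the abelian case via $x\circ_C y=yx=xy$) is formal and sound. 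As for comparison: the paper states this proposition without any proof at all, having only asserted after the definition that a bijection between weight~$1$ and weight~$-1$ operators exists, without writing it down. Your argument instantiates that bijection explicitly as $C(x)=xB(x)$, which buys you both the group axioms for $(G,\circ_C)$ (inherited through Proposition~\ref{prop:Derived}) and the brace identity (through Proposition~\ref{RBToSLB}) at no extra cost. The alternative, in the style of the paper's proof of Proposition~\ref{RBToSLB}, is the direct check
$$
(g\circ_C h)\,g^{-1}\,(g\circ_C f)
 = C(g)hC(g)^{-1}\underline{gg^{-1}}C(g)fC(g)^{-1}g
 = C(g)hfC(g)^{-1}g
 = g\circ_C(hf),
$$
but that route still needs a weight~$-1$ analogue of Proposition~\ref{prop:Derived} to know $(G,\circ_C)$ is a group, which your reduction supplies automatically; so your version is arguably the more complete one.
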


Let $G(B)$ be a skew left brace defined by an RB-operator~$B$ on $(G,\cdot)$,
then the lambda-map is defined by the rule
$\lambda_a(b) = B(a)bB(a)^{-1}$,
i.\,e., it is an inner automorphism of~$G^{(\cdot)}$.

\begin{proposition}
Given a skew left brace $G(B) = (G, \cdot, \circ_B)$,
the group $(G, \circ_B)$ is abelian
if and only if the following identity
\begin{equation} \label{RBlsbAbelianCirc}
[y, B(x)^{-1}] [B(y)^{-1}, x] = [y, x]
\end{equation}
holds in the Rota---Baxter group $(G,\cdot,B)$ for all $x, y \in G$.
\end{proposition}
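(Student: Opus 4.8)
The plan is to reduce commutativity of $(G,\circ_B)$ to a symmetric equation in the additive group $(G,\cdot)$ and then repackage that equation into commutators by a chain of elementary, reversible multiplications. First I would observe that $(G,\circ_B)$ is abelian precisely when $x\circ_B y = y\circ_B x$ for all $x,y\in G$. Using the defining rule $x\circ_B y = xB(x)yB(x)^{-1}$ (equivalently $x\circ_B y = x\lambda_x(y)$ with $\lambda_x(y)=B(x)yB(x)^{-1}$, as noted just above the statement), this commutativity is exactly the identity
\[
xB(x)yB(x)^{-1} = yB(y)xB(y)^{-1}\qquad(\ast)
\]
for all $x,y\in G$. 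Thus the whole proposition becomes the assertion that $(\ast)$ and \eqref{RBlsbAbelianCirc} are equivalent as identities on $(G,\cdot,B)$, and since both quantifiers range over all $x,y$, it suffices to transform one identity into the other by steps valid in the free-product sense, i.e.\ by multiplying on the left or right by group elements.

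Fixing the commutator convention $[a,b]=a^{-1}b^{-1}ab$, I would then carry out the following sequence. Right-multiply $(\ast)$ by $B(y)x^{-1}B(y)^{-1}$; the right-hand side telescopes to $y$, giving $xB(x)yB(x)^{-1}B(y)x^{-1}B(y)^{-1}=y$. Next left-multiply by $x^{-1}$ and then by $y^{-1}$, so that the leading block becomes $y^{-1}B(x)yB(x)^{-1}=[y,B(x)^{-1}]$ and the right-hand side becomes $y^{-1}x^{-1}y$. Finally right-multiply by $x$: the trailing block $B(y)x^{-1}B(y)^{-1}x$ is recognized as $[B(y)^{-1},x]$, and the right-hand side $y^{-1}x^{-1}yx$ is $[y,x]$. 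This yields precisely \eqref{RBlsbAbelianCirc}. Because every manipulation is multiplication by an element of $G$, each is invertible, so the chain is a genuine equivalence and both directions of the proposition are obtained at once.

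The content here is entirely computational, so I do not expect a conceptual obstacle; the only care needed is bookkeeping. The critical points are to fix the commutator convention so that the two mixed blocks $y^{-1}B(x)yB(x)^{-1}$ and $B(y)x^{-1}B(y)^{-1}x$ are correctly identified as $[y,B(x)^{-1}]$ and $[B(y)^{-1},x]$, and to make sure that every step really is a (reversible) left or right multiplication, so that no implication is lost. If the paper's convention for $[\,\cdot\,,\cdot\,]$ turned out to be $aba^{-1}b^{-1}$ instead, the same scheme would work after replacing each commutator block by its inverse and reordering the factors accordingly, but the derivation above confirms that the convention $[a,b]=a^{-1}b^{-1}ab$ is the one under which \eqref{RBlsbAbelianCirc} reads off cleanly from $(\ast)$.
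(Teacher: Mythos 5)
Your proof is correct and takes essentially the same route as the paper: both reduce commutativity of $(G,\circ_B)$ to the identity $xB(x)yB(x)^{-1}=yB(y)xB(y)^{-1}$ and turn it into \eqref{RBlsbAbelianCirc} by reversible group multiplications, under the same convention $[a,b]=a^{-1}b^{-1}ab$. The only cosmetic difference is that the paper passes through the intermediate form $[y,B(x)^{-1}]=[y,x]\,[x,B(y)^{-1}]$ via the relation $yx=xy[y,x]$, whereas you multiply through directly and read off the commutator blocks.
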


\begin{proof}
By the definition of $\circ_B$, the identity
$x\circ_B y = y\circ_B x$ is equivalent to the identity
$$
x B(x) y B(x)^{-1} = y B(y) x B(y)^{-1}.
$$
Rewrite it in the form
$$
x y B(x)^y B(x)^{-1} = y x B(y)^x B(y)^{-1}.
$$
Since, $yx = xy[y, x]$,  we get
$$
[y, B(x)^{-1}] = [y, x] [x, B(y)^{-1}]
$$
which is equivalent to~\eqref{RBlsbAbelianCirc}.
\end{proof}

We state a group analogue of the general result~\cite{Embedding}
holding for postalgebras and Rota---Baxter algebras
of an arbitrary variety.

\begin{theorem} \label{Embedding}
Every skew left brace can be embedded into a Rota---Baxter group.
\end{theorem}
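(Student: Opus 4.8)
The plan is to realize the given skew left brace $(A,\cdot,\circ)$ as a sub-skew-brace of a brace induced, via Proposition~\ref{RBToSLB}, by a Rota---Baxter group built directly out of $A$. Write $N=(A,\cdot)$ and $H=(A,\circ)$, and recall the homomorphism $\lambda\colon H\to\Aut(N)$, $a\mapsto\lambda_a$, from the preliminaries. First I would form the semidirect product $G=N\rtimes_\lambda H$ with multiplication
$$
(n_1,h_1)(n_2,h_2)=(n_1\lambda_{h_1}(n_2),\,h_1\circ h_2).
$$
Inside $G$ sit three natural subgroups: the copy $N'=\{(a,e):a\in A\}\cong N$, the copy $H'=\{(e,h):h\in A\}\cong H$, and the ``diagonal'' $\Delta=\{(a,a):a\in A\}$, which is a subgroup isomorphic to $H$ precisely because $(a,a)(b,b)=(a\lambda_a(b),a\circ b)=(a\circ b,a\circ b)$, using $a\circ b=a\lambda_a(b)$. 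The target of the embedding will be $N'$, via $\iota(a)=(a,e)$, which visibly respects the additive operation since $(a,e)(b,e)=(ab,e)$.

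The heart of the matter is to equip $G$ with a Rota---Baxter operator $B$ for which the induced product $\circ_B$ restricts on $N'$ to the original $\circ$. The cleanest source of operators here is Example~\ref{RBexamples}c: an exact factorization $G=KL$ yields the splitting operator $B(kl)=l^{-1}$. The naive factorization $G=N'H'$ is exact, but its splitting operator vanishes on $N'$ and collapses $N'$ to a \emph{trivial} brace, so it is useless. Instead I would use the factorization $G=\Delta\cdot H'$, which is again exact (one checks $\Delta\cap H'=\{(e,e)\}$ and that $(a,a)(e,h)=(a,a\circ h)$ sweeps out all of $G$). Writing $g=(x,y)$ in this factorization gives $\Delta$-part $(x,x)$ and $H'$-part $(e,x^{\circ(-1)}\circ y)$, so the corresponding splitting operator is
$$
B(x,y)=(e,\,y^{\circ(-1)}\circ x),
$$
which is a Rota---Baxter operator by Example~\ref{RBexamples}c. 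Its crucial feature is that on $N'$ it reads $B(a,e)=(e,a)$, feeding back exactly the $\lambda_a$-twist needed below.

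It then remains to verify that $\iota$ preserves $\circ$. For a splitting operator one has the short identity $g_1\circ_B g_2=k_1\,g_2\,l_1$, where $g_1=k_1l_1$ is the factorization of $g_1$; taking $g_1=(a,e)$ with $k_1=(a,a)$ and $l_1=(e,a^{\circ(-1)})$ yields $(a,e)\circ_B(b,e)=(a,a)(b,e)(e,a^{\circ(-1)})=(a\circ b,e)=\iota(a\circ b)$. Since $\iota$ is clearly injective (the first coordinate recovers $a$) and $N'$ is a subgroup for both $\cdot$ and $\circ_B$, the triple $(N',\cdot,\circ_B)$ is a sub-skew-brace of $G(B)$ isomorphic to $A$, completing the embedding into the Rota---Baxter group $(G,B)$. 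The main obstacle I anticipate is exactly the choice made in the second paragraph: getting $\circ_B$ to reproduce $\circ$ rather than degenerate, which forces one to factorize through the diagonal $\Delta$ instead of the obvious complement $N'$; once the correct exact factorization is identified, all remaining steps are routine verifications.
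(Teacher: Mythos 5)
Your proof is correct and is essentially the paper's own argument: your semidirect product $N\rtimes_\lambda H$ is exactly the paper's $\widetilde{G}=G\ltimes G$ with the two coordinates written in the opposite order, your exact factorization $G=\Delta\cdot H'$ through the diagonal is their $\widetilde{G}=H*L$, and your splitting operator $B(x,y)=(e,\,y^{\circ(-1)}\circ x)$ together with the embedding $\iota(a)=(a,e)$ correspond precisely to their $B((x,y))=(x^{\circ(-1)}\circ y,\,e)$ and $\psi(g)=(e,g)$.
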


\begin{proof}
Let $(G,\cdot,\circ)$ be a skew left brace.
Consider the semi-direct product $\widetilde{G} = G\ltimes G$ with the operation
\begin{equation}\label{CircProduct}
(x,y)*(z,t)
 = (x\circ z,y\lambda_x(t)).
\end{equation}

Consider the subgroups
$H = \{(g,g)\mid g\in G\}$ and $L = \{(g,e)\mid g\in G\}$
in $\widetilde{G}$.
We may decompose $\widetilde{G} = H*L$, since
$(x,y) = (y,y)*(y^{\circ(-1)}\circ x,e)$.
Define a splitting RB-operator~$B$ on $g = h*l\in\widetilde{G}$ as
$B(h*l) = l^{*(-1)}$. So, $B((x,y)) = (x^{\circ(-1)}\circ y,e)$.
Thus, $\widetilde{G}$~is an RB-group.

Embed $G$ into $\widetilde{G}$ as follows, $\psi\colon g\to (e,g)$.
Let us check that $\psi$ is an isomorphism of skew left braces
$G$ and $\Imm(\psi)$, where $\Imm(\psi)$ is considered
as a~subbrace of
$\widetilde{G}(B) = (\widetilde{G},*,\circ_B)$. Indeed,
$$
\psi(g)*\psi(h)
 = (e,g)*(e,h)
 = (e,g\cdot h)
 = \psi(g\cdot h),
$$

\vspace{-0.95cm}

\begin{multline*}
\psi(g) \circ_B \psi(h)
 = (e,g)*B((e,g))*(e,h)*(B((e,g)))^{-1} \\
 = (e,g)*(g,e)*(e,h)*(g^{\circ(-1)},e)
 = (g,g)*(g^{\circ(-1)},h)
 = (e,g\lambda_g(h))
 = (e,g\circ h)
 = \psi(g\circ h). \qedhere
\end{multline*}
\end{proof}

The following example gives some illustration of this theorem.

\begin{example}
Let $G = (\mathbb{Z}, +, \circ)$ be the left brace, where $+$
is the addition on the set of integers and
$a \circ b = a + (-1)^a b$, $a, b \in \mathbb{Z}$.
It is easy to see that inverse element under~$\circ$
is defined by the rule $a^{\circ(-1)} = (-1)^{a+1} a$.
Let us construct the RB-group containing~$G$.
Define the group operation on the set
$\widetilde{G} = \mathbb{Z} \times \mathbb{Z}$:
$$
(a, b) * (c, d) = (a+(-1)^a c, b +(-1)^a d),\quad
(a, b),(c, d) \in \widetilde{G}.
$$
One can see that the inverse element in~$\widetilde{G}$ equals
$$
(a, 0)^{*(-1)} = ((-1)^{a+1} a, 0) = (a^{\circ(-1)}, 0),\quad a \in \mathbb{Z}.
$$
Consider the subgroups
$H = \{(g,g)\mid g\in \mathbb{Z}\}$ and
$L = \{(g,0)\mid g\in \mathbb{Z}\}$ in $\widetilde{G}$.
We decompose $\widetilde{G} = H*L$, since
$$
(x,y)
 = (y,y)*(y^{\circ(-1)}\circ x,0)
 = (y, y)*((-1)^{y+1}(y-x), 0).
$$
We can define an RB-operator
$B\colon \widetilde{G} \to \widetilde{G}$ by
$$
B((g, h))
 = ((-1)^{h+1}(h-g),0)^{*(-1)}
 = ((-1)^{g+1}(h-g),0),\quad (g,h)\in G.
$$
Hence, $G$ embeds into the RB-group $(\widetilde{G}, *,B)$ by $g\mapsto(0,g)$.
\end{example}

\begin{remark} \label{tilde{G}Nilpotent}
The construction of~$\widetilde{G}$ has already appeared
(see~\cite[\S4]{NilpotentBraces} and~\cite{BR}) in the context of nilpotent skew left braces. More detailed, one can define the new operation~$\star$ on a~skew left brace~$G$,
$g\star h = g^{-1}(g\circ h)h^{-1}$.
Note that in terms of the group~$\widetilde{G}$, we have
$$
[(e,h),(g,e)]
 = (e,g\star h).
$$
This observation helps to the authors of~\cite{NilpotentBraces}
to clarify the construction of left and right series of~$G$ and hence,
the definition of left (right) nilpotency of~$G$. 
We will concern them in~\S5. 
Given an RB-group $(G,B)$, we have the equality $g\star h = [B(g)^{-1},h^{-1}]$.
\end{remark}

\begin{remark}
To prove the embedding in Theorem~\ref{Embedding}, one may take the group
$\lambda(G)\times G=\{(\lambda_g,h)\mid g,h\in G\}$
instead of~$\widetilde{G}$. Thus, we get a~subgroup of $\Hol(G^{(\cdot)})$,
and $H = \{(g,g)\mid g\in G\}$~is its regular subgroup.
However, further we will apply exactly the group $\widetilde{G}$.
\end{remark}

Now, we reprove Problem 19.90(d) from Kourovka Notebook~\cite{Kourovka}.

\begin{corollary}[\cite{TsangChao}] \label{coro:Kegel}
Let $(G,\cdot,\circ)$ be a finite skew left brace.
If $G^{(\circ)}$ is nilpotent, then $G^{(\cdot)}$ is solvable.
\end{corollary}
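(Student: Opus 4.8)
The plan is to deduce the statement from Theorem~\ref{Embedding} together with the classical Kegel--Wielandt theorem, which asserts that a finite group that factorizes as the product of two nilpotent subgroups is solvable. Thus it suffices to realize $G^{(\cdot)}$ as a subgroup of a finite group that admits such a factorization in which both factors are copies of the nilpotent group $G^{(\circ)}$.

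First I would invoke the embedding of Theorem~\ref{Embedding}: form $\widetilde{G} = G\ltimes G$ with the product~\eqref{CircProduct}, which is finite since $G$ is, and recall the exact factorization $\widetilde{G} = H*L$ with $H = \{(g,g)\mid g\in G\}$ and $L = \{(g,e)\mid g\in G\}$. A direct computation identifies both factors with the multiplicative group: from $\lambda_g(e)=e$ one gets $(g,e)*(h,e) = (g\circ h, e)$, so $g\mapsto (g,e)$ is an isomorphism $G^{(\circ)}\to L$; and since $g\lambda_g(h) = g\cdot g^{-1}(g\circ h) = g\circ h$, one gets $(g,g)*(h,h) = (g\circ h, g\circ h)$, so $g\mapsto (g,g)$ is an isomorphism $G^{(\circ)}\to H$. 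Hence $H\cong L\cong G^{(\circ)}$ are both nilpotent by hypothesis.

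Next I would locate $G^{(\cdot)}$ inside $(\widetilde{G},*)$. The map $\psi\colon g\mapsto (e,g)$ of Theorem~\ref{Embedding} satisfies $\psi(g)*\psi(h) = (e, g\cdot h)$ (using $\lambda_e=\id$), so $\Imm(\psi) = \{(e,g)\mid g\in G\}$ is a subgroup of $(\widetilde{G},*)$ isomorphic to $G^{(\cdot)}$. Thus the additive group of the brace embeds as an ordinary subgroup of the finite group $(\widetilde{G},*)$.

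Finally, since $(\widetilde{G},*) = H*L$ is a finite group written as the product of the two nilpotent subgroups $H$ and $L$, the Kegel--Wielandt theorem gives that $(\widetilde{G},*)$ is solvable; as every subgroup of a solvable group is solvable, the subgroup $\Imm(\psi)\cong G^{(\cdot)}$ is solvable, which is the claim. The only nontrivial external ingredient is the Kegel--Wielandt theorem itself; the contribution of the Rota--Baxter machinery is precisely to produce, out of an arbitrary finite skew left brace with nilpotent multiplicative group, a finite group carrying an exact factorization into two nilpotent subgroups and containing the additive group as a subgroup. I expect the main point to be conceptual rather than computational: recognizing that the enveloping group $\widetilde{G}$ supplies exactly the factorization Kegel--Wielandt requires, and that \emph{both} factors $H$ and $L$ are copies of $G^{(\circ)}$ (not of $G^{(\cdot)}$), so that the nilpotency hypothesis transfers to both of them simultaneously.
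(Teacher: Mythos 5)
Your proposal is correct and follows essentially the same route as the paper: both embed $G^{(\cdot)}$ as the subgroup $\{(e,g)\mid g\in G\}$ of the enveloping group $\widetilde{G}$ from Theorem~\ref{Embedding}, note the factorization $\widetilde{G}=H*L$ with $H\cong L\cong G^{(\circ)}$ nilpotent, and conclude solvability of $\widetilde{G}$ by the Kegel--Wielandt theorem~\cite{Kegel}. Your explicit verifications that $H$ and $L$ are copies of $G^{(\circ)}$ and that $\psi$ lands in a subgroup isomorphic to $G^{(\cdot)}$ are details the paper leaves implicit, but the argument is the same.
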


\begin{proof}
Given a finite skew left brace~$G$,
we may construct $\widetilde{G}$ as in the proof of Theorem~\ref{Embedding}.
We have also $\widetilde{G} = H*L$, where
$H = \{(g,g)\mid g\in G\}$ and $L = \{(g,e)\mid g\in G\}$.
Note that $H\cong L\cong G^{(\circ)}$.
Since $G^{(\circ)}$ is nilpotent, $\widetilde{G}$
is solvable by the Kegel's theorem~\cite{Kegel}.
Thus, its subgroup
$M = \{(e,g)\mid g\in G\}\cong G^{(\cdot)}$ is also solvable.
\end{proof}

\begin{corollary}[\cite{Nasybullov,TsangChao}]
Let $G$ be a skew left brace.
If $G^{(\circ)}$ is abelian, then $G^{(\cdot)}$ is metabelian.
\end{corollary}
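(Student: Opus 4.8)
The plan is to reuse the enveloping Rota--Baxter group $\widetilde{G}$ constructed in the proof of Theorem~\ref{Embedding}, exactly as in the preceding Corollary~\ref{coro:Kegel}, but to replace the appeal to Kegel's theorem (which requires finiteness) by a theorem of It\^o valid for arbitrary groups. Recall that $\widetilde{G} = G\ltimes G$ carries the operation $(x,y)*(z,t) = (x\circ z, y\lambda_x(t))$ and admits the exact factorization $\widetilde{G} = H*L$ with $H = \{(g,g)\mid g\in G\}$ and $L = \{(g,e)\mid g\in G\}$. First I would record the three relevant subgroups together with their isomorphism types: a direct computation, using $g\cdot\lambda_g(h) = g\circ h$ and $\lambda_g(e)=e$, gives $H\cong L\cong G^{(\circ)}$, while $M = \{(e,g)\mid g\in G\}\cong G^{(\cdot)}$ because $\lambda_e = \id$.

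The key step is the observation that $\widetilde{G}$ is the product of the two subgroups $H$ and $L$, both of which are abelian under the hypothesis that $G^{(\circ)}$ is abelian. By It\^o's theorem, a group that is the product of two abelian subgroups has abelian commutator subgroup, that is, it is metabelian. Hence $\widetilde{G}$ is metabelian, and no finiteness assumption is needed at this point.

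Finally, metabelianness is inherited by subgroups, so $M\cong G^{(\cdot)}$ is metabelian, which is the claim. I do not expect a genuine obstacle here: the entire content is to notice that the factorization $\widetilde{G}=H*L$ coming from Theorem~\ref{Embedding} is a product of \emph{two abelian} subgroups precisely when $G^{(\circ)}$ is abelian, at which point It\^o's theorem applies directly. The only items to verify are the identifications $H\cong L\cong G^{(\circ)}$ and the passage of metabelianness to the subgroup $M$, both of which are routine.
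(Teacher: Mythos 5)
Your proposal is correct and matches the paper's own proof essentially verbatim: the paper likewise reuses the factorization $\widetilde{G} = H*L$ with $H\cong L\cong G^{(\circ)}$ from the proof of Corollary~\ref{coro:Kegel} and invokes It\^o's theorem (valid without finiteness) to conclude that $\widetilde{G}$ is metabelian, hence so is its subgroup $M = \{(e,g)\mid g\in G\}\cong G^{(\cdot)}$. Your extra verifications of the isomorphism types of $H$, $L$, and $M$ are accurate and simply spell out details the paper leaves implicit.
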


\begin{proof}
We may repeat the proof of Corollary~\ref{coro:Kegel} to get
the factorization $\widetilde{G} = H*L$.
By the Ito's theorem~\cite{Ito}, $\widetilde{G}$ is metabelian.
Thus, its subgroup
$M = \{(e,g)\mid g\in G\}\cong G^{(\cdot)}$ is also metabelian.
\end{proof}

A group $G$ with trivial center and trivial group $\Out(G)$
of outer automorphisms is called {\it complete}.

\begin{proposition}
Let $(G,\cdot,\circ)$ be a skew left brace such that $G^{(\cdot)}$ is complete.
Then there exists a Rota---Baxter operator~$B$ on $G^{(\cdot)}$ such that $G = G(B)$.
\end{proposition}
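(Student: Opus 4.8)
The plan is to use completeness to lift each automorphism $\lambda_a$ to a uniquely determined group element, and then to read off the Rota--Baxter identity from the fact that $\lambda$ is a homomorphism. First I would recall that for the skew left brace $(G,\cdot,\circ)$ the map $\lambda\colon (G,\circ)\to\Aut(G^{(\cdot)})$, $a\mapsto\lambda_a$ with $\lambda_a(b)=a^{-1}(a\circ b)$, is a group homomorphism and satisfies $a\circ b=a\lambda_a(b)$. Since $G^{(\cdot)}$ is complete, its center is trivial and $\Out(G^{(\cdot)})$ is trivial, so $\Aut(G^{(\cdot)})=\Inn(G^{(\cdot)})$ and the conjugation assignment $g\mapsto(b\mapsto gbg^{-1})$ is an isomorphism $G^{(\cdot)}\xrightarrow{\sim}\Aut(G^{(\cdot)})$: it is injective because $Z(G^{(\cdot)})=\{e\}$ and surjective because every automorphism is inner.

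Consequently, for each $a\in G$ there is a unique element, which I denote $B(a)$, with $\lambda_a(b)=B(a)bB(a)^{-1}$ for all $b\in G$. This defines a map $B\colon G\to G$, and by construction
$$
x\circ y=x\lambda_x(y)=xB(x)yB(x)^{-1}=x\circ_B y,
$$
so that $G=G(B)$ will follow as soon as $B$ is shown to be a Rota--Baxter operator on $G^{(\cdot)}$. The key step is to observe that $B$ is a homomorphism from $(G,\circ)$ to $(G,\cdot)$. Indeed, since $\lambda$ is a homomorphism, $\lambda_{a\circ b}=\lambda_a\lambda_b$, and comparing with conjugation shows that $B(a\circ b)$ and $B(a)B(b)$ induce the same inner automorphism of $G^{(\cdot)}$; as the center is trivial, these two elements coincide, whence $B(a\circ b)=B(a)B(b)$.

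With multiplicativity of $B$ in hand, the Rota--Baxter axiom is immediate. Since $gB(g)hB(g)^{-1}=g\circ_B h=g\circ h$, I obtain
$$
B\bigl(gB(g)hB(g)^{-1}\bigr)=B(g\circ h)=B(g)B(h),
$$
which is exactly~\eqref{RB}. Thus $B$ is an RB-operator on $G^{(\cdot)}$ realizing the given $\circ$, and the proof is complete.

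I expect the main obstacle to be the well-definedness of $B$, i.e.\ the step where completeness is genuinely used. Triviality of $\Out(G^{(\cdot)})$ is what guarantees that $\lambda_a$ is inner, so that some $B(a)$ exists at all; triviality of the center is what makes $B(a)$ unique and, crucially, what allows the passage from the homomorphism law for $\lambda$ to the multiplicativity $B(a\circ b)=B(a)B(b)$. Once $B$ is known to be a $\circ$-to-$\cdot$ homomorphism, the verification of the Rota--Baxter identity is purely formal and requires no further computation.
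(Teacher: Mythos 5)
Your proposal is correct and follows essentially the same route as the paper: use triviality of $\Out(G^{(\cdot)})$ to realize each $\lambda_a$ as conjugation by a unique element (uniqueness from $Z(G^{(\cdot)})=\{e\}$), define $B(a)$ accordingly, and reduce the Rota--Baxter identity to multiplicativity $B(g\circ h)=B(g)B(h)$, which follows from $\lambda_{g\circ h}=\lambda_g\lambda_h$ together with the trivial center. The only cosmetic difference is that you invoke the known fact that $\lambda\colon (G,\circ)\to\Aut(G^{(\cdot)})$ is a homomorphism (stated in the paper's preliminaries via~\cite{GV2017}), whereas the paper reverifies this identity inline from the skew left brace axiom~\eqref{LSbrace}.
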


\begin{proof}
Let us define $B$ in such a way that
\begin{equation}\label{CompleteRB}
g\circ h = gB(g)hB(g)^{-1},
\end{equation}
i.\,e., $\lambda_g(h) = h^{B(g)^{-1}}$.
Since $\Out(G)$ is trivial, we may find an element $x\in G$ such that
$\lambda_g(h) = h^x$. The element $x$ satisfying this equality is unique,
since $Z(G)$ is trivial. Let us define $B(g) = x^{-1}$.

It remains to check that $B$ is an RB-operator on $G^{(\cdot)}$.
By~\eqref{CompleteRB}, we have to show that
$B(g)B(h) = B(g\circ h)$. Since $Z(G) = \{e\}$, it is equivalent to verify
$$
s^{B(h)^{-1}B(g)^{-1}}
 = s^{B(g\circ h)^{-1}}
$$
for all $s\in G$. In terms of~$\lambda$ we check that
$$
g^{-1}(g\circ(h^{-1}(h\circ s)))
 = \lambda_g(\lambda_h(s))
 = \lambda_{g\circ h}(s)
 = (g\circ h)^{-1}(g\circ h\circ s).
$$
To confirm this we apply~\eqref{LSbrace} and its consequence
$a^{-1}(a\circ c) = a\circ (a^{\circ(-1)}\cdot c)$,
$$
g^{-1}(g\circ(h^{-1}(h\circ s)))
 = g\circ (g^{\circ(-1)}h^{-1}(h\circ s))
 = g\circ h\circ ((h^{\circ(-1)}\circ g^{\circ(-1)})s)
 = (g\circ h)^{-1}(g\circ h\circ s). \qedhere
$$
\end{proof}

\begin{remark}
Suppose that $(G,\cdot,\circ)$ is a skew left brace
such that both $G^{(\cdot)}$ and $G^{(\circ)}$ are Lie groups.
We want to clarify what an algebraic structure we get after differentiation of~$G$ in the sense which we clarify below.

By Theorem~\ref{Embedding}, we may embed~$G$
into $\widetilde{G}(B)$, where~$B$ is smooth by the definition.
Further, we differentiate the Lie RB-group $(\widetilde{G},B)$
and get the Lie RB-algebra $(L(\widetilde{G}),P)$~\cite{Guo2020}.

To make the last step, we need a definition.
A space~$A$ with two bilinear operations $[,]$ and $\cdot$
is called a {\it post-Lie algebra}~\cite{Burde_41,Vallette2007}
if $[,]$ is a Lie bracket and the next two identities hold
\begin{gather*}
x\cdot [y,z] = [x \cdot y,z] + [y,x\cdot z], \\
(x \cdot y) \cdot z - x \cdot (y \cdot z)
- (y \cdot x) \cdot z + y \cdot (x \cdot z) = [y,x]\cdot z.
\end{gather*}

It is known that every Lie algebra~$(L,[,])$ with an RB-operator~$R$
of weight~1 is a~post-Lie algebra under the products $[,]$ and
$x\cdot y = [R(x),y]$~\cite{Lax}.
Thus, differentiation of $G$ inside $(\widetilde{G},B)$
gives a~post-Lie algebra~$(L(G),[,],\cdot)$.
\end{remark}

In~\cite[Problem~12]{Problems}, it was stated the question how looks like
a~free skew left brace. In~\cite{Orza}, some implicit
construction of the free skew left brace was suggested.
Let us show that the construction of a~free Rota---Baxter group leads to the knowledge
of how a~free skew left brace looks like. Recall that a~free Rota---Baxter group
is a~free algebraic structure in the signature~$\langle\cdot,{}^{-1},B\rangle$
with one binary and two unary operations.

\begin{proposition}
Let $(G,B)$ be a free RB-group generated by a~set~$X$.
Denote by $S$ the skew left subbrace of $G(B)$ generated by~$X$.
Then $S$~is a~free skew left brace generated by~$X$.
\end{proposition}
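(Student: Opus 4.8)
The plan is to verify directly the universal property of the free skew left brace: I must show that for every skew left brace $A$ and every set map $f\colon X\to A$ there is a unique homomorphism of skew left braces $\phi\colon S\to A$ with $\phi|_X=f$. The two ingredients that make this possible are the freeness of $(G,B)$ in the signature $\langle\cdot,{}^{-1},B\rangle$ and the embedding theorem (Theorem~\ref{Embedding}); the bridge between them is the observation that the passage $(G,\cdot,B)\mapsto G(B)$ is functorial, i.e.\ it turns homomorphisms of Rota---Baxter groups into homomorphisms of the associated skew left braces.

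First I would record this functoriality. If $\theta\colon(G_1,\cdot,B_1)\to(G_2,\cdot,B_2)$ is a homomorphism of Rota---Baxter groups (so $\theta$ is a group homomorphism with $\theta B_1=B_2\theta$), then for all $x,y$,
$$
\theta(x\circ_{B_1}y)=\theta(xB_1(x)yB_1(x)^{-1})=\theta(x)B_2(\theta(x))\theta(y)B_2(\theta(x))^{-1}=\theta(x)\circ_{B_2}\theta(y),
$$
so $\theta$ is a homomorphism $G_1(B_1)\to G_2(B_2)$ of skew left braces, automatically compatible with both group structures and hence with both inverses.

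Next I would carry out the construction. Given $A$ and $f$, apply Theorem~\ref{Embedding} to obtain a Rota---Baxter group $(\widehat A,\widehat B)$ together with an embedding of skew left braces $\iota\colon A\hookrightarrow\widehat A(\widehat B)$, so that $\iota(A)$ is a subbrace of $\widehat A(\widehat B)$. By the universal property of the free Rota---Baxter group $(G,B)$, the map $\iota\circ f\colon X\to\widehat A$ extends to a unique Rota---Baxter group homomorphism $F\colon G\to\widehat A$. By the functoriality just noted, $F\colon G(B)\to\widehat A(\widehat B)$ is a homomorphism of skew left braces; restrict it to $S$. Because $S$ is by definition the subbrace of $G(B)$ generated by $X$ and $F(X)=\iota(f(X))\subseteq\iota(A)$ with $\iota(A)$ a subbrace, the image $F(S)$ lies inside $\iota(A)$. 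Composing with the brace isomorphism $\iota^{-1}\colon\iota(A)\to A$ yields a homomorphism of skew left braces $\phi:=\iota^{-1}\circ F|_S\colon S\to A$ satisfying $\phi|_X=f$.

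Uniqueness is immediate: any two brace homomorphisms $S\to A$ that agree on $X$ agree on the subbrace generated by $X$, which is all of $S$. This establishes the universal property, so $S$ is free on $X$. I expect the only genuinely delicate point to be the step $F(S)\subseteq\iota(A)$; it is where the whole argument hinges on $A$ sitting inside $\widehat A(\widehat B)$ as a subbrace (supplied by Theorem~\ref{Embedding}) and on $S$ being generated precisely by $X$, so that no element of $G$ lying outside the subbrace generated by $X$---in particular no value $B(x)$, which need not belong to $S$---is ever required to be sent into $A$.
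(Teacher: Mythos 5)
Your proposal is correct and follows essentially the same route as the paper: both arguments combine Theorem~\ref{Embedding} (embedding $A$ into the Rota---Baxter group $\widetilde{A}$) with the freeness of $(G,B)$ in the signature $\langle\cdot,{}^{-1},B\rangle$ to transport maps from $X$ through the enveloping RB-group, the functoriality computation $\theta(x\circ_{B_1}y)=\theta(x)\circ_{B_2}\theta(y)$ being implicit in the paper's phrase ``homomorphism of RB-groups.'' Your universal-property formulation is in fact slightly tidier than the paper's, since by restricting $F$ to $S$ and using that $F(X)\subseteq\iota(A)$ with $\iota(A)$ a subbrace, you avoid the paper's unproved claim that $\widetilde{A}$ is generated by $X$ as a Rota---Baxter group.
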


\begin{proof}
Let $(A,\cdot,\circ)$ be a~skew left brace generated by~$X$.
By Theorem~\ref{Embedding}, $A$~embeds into the RB-group~$\tilde{A}$,
which itself is generated by the set~$X$ as the Rota---Baxter group.
Thus, there exists a surjective homomorphism $\psi\colon G \to \tilde{A}$ of RB-groups.
Hence, $\psi$~is a~surjective homomorphism from $S$ to the~$A$
considered as a~subbrace of~$\tilde{A}$.
Since $A$ is an arbitrary skew left brace generated by~$X$,
$S$ is generated by~$X$ too, and $A$ is a homomorphic image of~$S$,
we conclude that $S$~is the free skew left brace generated by~$X$.
\end{proof}

\section{Constructions of skew left braces via RB-groups}

Let us apply Lemmas~\ref{lem:tilda} and~\ref{lem:Aut} to get RB-groups from the given one.
Let $(G,\cdot,B)$ be an RB-group, then
$(G,\circ_B)\cong (G,\circ_{\widetilde{B}})\cong (G,\circ_{B^{(\varphi)}})$,
where $\varphi\in\Aut(G^{(\cdot)})$. Indeed,
$$
g\circ_{\widetilde{B}}h
 = g(g^{-1}B(g^{-1}))h(g^{-1}B(g^{-1}))^{-1}
 = B(g^{-1})h B(g^{-1})^{-1}g
 = (g^{-1}\circ_B h^{-1})^{-1},
$$
so, ${}^{-1}\colon x\to x^{-1}$ is an isomorphism of
the groups $(G,\circ_B)$ and $(G,\circ_{\widetilde{B}})$.
Similarly,
\begin{multline*}
g\circ_{B^{(\varphi)}}h
 = g\varphi^{-1}(B(\varphi(g)))h(\varphi^{-1}(B(\varphi(g))))^{-1}
 = \varphi^{-1}( \varphi(g)B(\varphi(g))\varphi(h)B(\varphi(g))^{-1}) \\
 = \varphi^{-1}(\varphi(g)\circ_B\varphi(h)),
\end{multline*}
thus, $\varphi(g\circ_{B^{(\varphi)}}h) = \varphi(g)\circ_B \varphi(h)$,
and the corresponding groups are isomorphic.
In this case, $\varphi$ is an isomorphism of skew left braces
$(G,,\cdot,\circ_B)$ and $(G,\cdot,\circ_{B^{(\varphi)}})$.

By every splitting RB-operator~$B$ defined by an exact factorization~$G = HL$
as $B(hl) = l^{-1}$, we get the skew left brace $G(B)$ with the product
$$
x \circ y
 = (hl)\circ y
 = hlB(hl)yB(hl)^{-1}
 = hyl.
$$
Such skew left braces appeared several times, see~\cite[Example~1.6]{GV2017},
~\cite[Theorem~2.3]{SV}, and~\cite[p.~19]{BNY-1}.

\begin{proposition}[\cite{BG}] \label{triangular}
Let $G$ be a group such that $G = HLM$, where $H$, $L$, and $M$
are subgroups of $G$ with pairwise trivial intersection.
Let $C$ be a Rota---Baxter operator on~$L$.
Moreover, $[H,L] = [C(L),M] = e$.
Then the map $B\colon G\to G$ defined by the formula
$$
B(hlm) = C(l)m^{-1}
$$
is a Rota---Baxter operator on~$G$.
\end{proposition}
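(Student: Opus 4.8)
The plan is to verify the Rota--Baxter identity~\eqref{RB} for $B$ by a direct computation, invoking the three structural hypotheses one at a time. Before starting I would confirm that $B$ is well-defined, i.e. that the data $l$ and $m$ on which $B(hlm)=C(l)m^{-1}$ depends are determined by~$g$: since $[H,L]=e$ the set $HL$ is a subgroup factoring uniquely as $H\cdot L$, and the factorization of $G$ by $HL$ and $M$ is exact, so the triple $(h,l,m)$ is unique. I would also record two bookkeeping facts used repeatedly: first, $C(L)\subseteq L$ because $C$ is an operator on~$L$; second, the hypothesis $[C(L),M]=e$ means that every element $C(x)$ commutes with every element of $M$, whence so does $C(x)^{-1}$.

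Fix $g=hlm$ and $g'=h'l'm'$, so that $B(g)=C(l)m^{-1}$ and $B(g)^{-1}=mC(l)^{-1}$. First I would simplify $gB(g)$: since $m$ commutes with $C(l)\in C(L)$,
\[
gB(g)=hlm\,C(l)m^{-1}=hl\,(mC(l)m^{-1})=hlC(l),
\]
which lies in $HL$ with $L$-part $lC(l)$. Next I would assemble the whole Rota--Baxter argument $w:=gB(g)\,g'\,B(g)^{-1}$. Multiplying $hlC(l)$ by $g'=h'l'm'$ and pushing $h'$ to the left past $lC(l)\in L$ via $[H,L]=e$ gives $hh'\cdot lC(l)l'\cdot m'$; multiplying on the right by $mC(l)^{-1}$ and pushing $C(l)^{-1}$ to the left past $m'm\in M$ via $[C(L),M]=e$ yields
\[
w=(hh')\bigl(lC(l)l'C(l)^{-1}\bigr)(m'm),
\]
which is exactly the $HLM$-decomposition of $w$: the $H$-part is $hh'\in H$, the $L$-part is $lC(l)l'C(l)^{-1}\in L$, and the $M$-part is $m'm\in M$.

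The decisive step is to apply $B$ and recognize the Rota--Baxter identity for $C$ on~$L$. By definition,
\[
B(w)=C\bigl(lC(l)l'C(l)^{-1}\bigr)(m'm)^{-1},
\]
and since $C$ satisfies~\eqref{RB} on $L$, applying that identity to the pair $(l,l')\in L\times L$ gives $C\bigl(lC(l)l'C(l)^{-1}\bigr)=C(l)C(l')$. Hence $B(w)=C(l)C(l')m^{-1}m'^{-1}$. On the other hand $B(g)B(g')=C(l)m^{-1}C(l')m'^{-1}=C(l)C(l')m^{-1}m'^{-1}$, the last equality using once more that $C(l')$ commutes with $m^{-1}$. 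Comparing the two expressions yields $B(g)B(g')=B(w)$, which is precisely~\eqref{RB}.

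I expect the main obstacle to be the commutator bookkeeping rather than any conceptual difficulty: one must respect that $H$ commutes only with $L$, and that only the image $C(L)$ (not all of $L$) commutes with $M$, so each cancellation has to be justified by the correct one of the two hypotheses. The pleasant point that makes everything collapse is that, after these moves, the $L$-component of $w$ is exactly $lC(l)l'C(l)^{-1}$, the very term on which the weight-$1$ identity for $C$ fires to produce $C(l)C(l')$.
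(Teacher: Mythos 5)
Your verification of the Rota--Baxter identity is correct and follows the route the paper itself relies on: the paper does not reprove Proposition~\ref{triangular} (it is quoted from~\cite{BG}), but the computation displayed immediately after it for $x\circ y$ uses exactly your moves --- absorbing $mC(l)m^{-1}=C(l)$, sliding $h'$ leftwards past $lC(l)$ via $[H,L]=e$, sliding $C(l)^{-1}$ leftwards past $m'm$ via $[C(L),M]=e$ --- and your decisive step, recognizing $C\bigl(lC(l)l'C(l)^{-1}\bigr)=C(l)C(l')$ as the weight-$1$ identity~\eqref{RB} for $C$ on the $L$-component, is the intended mechanism. The bookkeeping on both sides ($B(w)=C(l)C(l')m^{-1}m'^{-1}$ and $B(g)B(g')=C(l)C(l')m^{-1}m'^{-1}$, the latter commuting $C(l')$ past $m^{-1}$) is accurate.

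The one place your write-up overreaches is well-definedness. Pairwise trivial intersections do give uniqueness of $h\cdot l$ inside the subgroup $HL$, but they do \emph{not} imply that the factorization $G=(HL)M$ is exact, which is what you assert. Counterexample: $G=\mathbb{Z}_2\times\mathbb{Z}_2=\{e,a,b,ab\}$ with $H=\langle a\rangle$, $L=\langle b\rangle$, $M=\langle ab\rangle$. These intersect pairwise trivially, $[H,L]=[L,M]=e$, and $G=HLM$, yet $HL\cap M=M\neq\{e\}$: the element $e$ decomposes both as $e\cdot e\cdot e$ and as $a\cdot b\cdot(ab)$, so with $C=\id_L$ (an RB-operator on the abelian group $L$) the formula would give $B(e)=e$ and $B(e)=b\cdot(ab)^{-1}\cdot\text{(corrected)}=a$ simultaneously. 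So uniqueness of the triple $(h,l,m)$ is an additional hypothesis, implicit in the statement as inherited from~\cite{BG}, not a consequence of the listed ones; once it is assumed (as intended), your proof is complete.
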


We have $G^{(\circ)}\cong H\times L_C\times M^{\op}$, where
$M^{\op}$ is the set $M$ with the opposite product $m*m' = m'm$.
Indeed,
\begin{multline*}
x\circ y
 = (hlm)\circ (h'l'm')
 = (hlm)B(hlm)h'l'm'B(hlm)^{-1}
 = hlmC(l)m^{-1}h'l'm'mC(l)^{-1} \\
 = hl\underline{mm^{-1}}C(l)h'l'm'mC(l)^{-1}
 = hh'lC(l)l'm'mC(l)^{-1}
 = hh'(l\circ_L l') m'm.
\end{multline*}

\begin{proposition}[\cite{BG}]
Given a semidirect product $G = H\rtimes L$, let $C$ be a~Rota---Baxter operator on $L$.
Then a map $B\colon G\to G$ defined by the formula $B(hl) = C(l)$, where $h\in H$ and $l\in L$,
is a~Rota---Baxter operator.
\end{proposition}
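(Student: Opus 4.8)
The plan is to exploit the structure of the semidirect product through the canonical projection $\pi\colon G\to L$ that collapses the normal factor~$H$. Since $H$ is normal in $G=H\rtimes L$, this projection is a group homomorphism satisfying $\pi(hl)=l$ for $h\in H$ and $l\in L$. Writing $\iota\colon L\to G$ for the inclusion, the defining formula $B(hl)=C(l)$ says precisely that $B=\iota\circ C\circ\pi$: the operator first forgets the $H$-component, applies $C$ inside~$L$, and reinjects the result into~$G$. The elementary facts I would record before starting are that $\pi$ is a homomorphism, that $\iota$ is a homomorphism, and that $\pi\circ\iota=\id_L$.

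With these in hand, checking the Rota---Baxter identity~\eqref{RB} for~$B$ reduces to a direct substitution. For $g,k\in G$, I would first use that $\iota$ is a homomorphism to write $B(g)B(k)=\iota\bigl(C(\pi(g))C(\pi(k))\bigr)$, and then invoke the Rota---Baxter identity for~$C$ on the group~$L$ to rewrite the inner expression as $C\bigl(\pi(g)\,C(\pi(g))\,\pi(k)\,C(\pi(g))^{-1}\bigr)$. For the right-hand side of~\eqref{RB}, I would compute $\pi\bigl(gB(g)kB(g)^{-1}\bigr)$ using that $\pi$ is a homomorphism together with $\pi(B(g))=\pi(\iota(C(\pi(g))))=C(\pi(g))$, the last equality holding because $\pi\circ\iota=\id_L$. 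This gives $\pi\bigl(gB(g)kB(g)^{-1}\bigr)=\pi(g)\,C(\pi(g))\,\pi(k)\,C(\pi(g))^{-1}$, whence $B\bigl(gB(g)kB(g)^{-1}\bigr)=\iota\bigl(C(\pi(g)\,C(\pi(g))\,\pi(k)\,C(\pi(g))^{-1})\bigr)$, which matches the expression obtained for $B(g)B(k)$.

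I do not expect a genuine obstacle here; the only point demanding care is the bookkeeping in the semidirect product, namely confirming that the $L$-component of $gB(g)kB(g)^{-1}$ is undisturbed by the action of~$L$ on~$H$. This is exactly what the homomorphism property of~$\pi$ secures: although the $H$-component of the product is twisted by that action, this component is discarded by~$\pi$, so the argument ultimately fed into~$C$ stays entirely inside~$L$. I would emphasize that this is precisely why the result is not subsumed by the earlier triangular construction, which required $[H,L]=e$: the projection argument needs no such commuting hypothesis. Once the factorization $B=\iota\circ C\circ\pi$ is in place, the verification is a one-line computation, and I would present it in that form rather than by expanding products of the type $a\ell\cdot bm$ by hand.
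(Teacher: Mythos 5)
Your proof is correct. Note that the paper itself states this proposition without proof, citing~\cite{BG}; the verification there, like the analogous computations the paper does display (the proof style of Proposition~\ref{triangular} and the circle-product expansion immediately following this statement), is a direct hand computation in the semidirect product: writing $g=hl$ and $k=h'l'$, one expands $gB(g)kB(g)^{-1}=h\cdot\bigl(lC(l)\,h'\,(lC(l))^{-1}\bigr)\cdot lC(l)l'C(l)^{-1}$, observes that the conjugated factor stays in the normal subgroup~$H$ while the $L$-part is $lC(l)l'C(l)^{-1}$, and applies the Rota---Baxter identity for $C$ to get $C\bigl(lC(l)l'C(l)^{-1}\bigr)=C(l)C(l')=B(g)B(k)$. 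Your factorization $B=\iota\circ C\circ\pi$ packages exactly this bookkeeping into the single fact that $\pi$ is a homomorphism with $\pi\circ\iota=\id_L$ --- which is precisely where normality of~$H$ enters --- so the two arguments have the same mathematical content, but yours is cleaner and proves strictly more: it shows that for any group~$G$, any retraction homomorphism of~$G$ onto a subgroup~$L$ (equivalently, any idempotent endomorphism with image~$L$), and any Rota---Baxter operator~$C$ on~$L$, the composite $\iota\circ C\circ\pi$ is a Rota---Baxter operator on~$G$; the semidirect-product hypothesis serves only to supply the retraction. The one small point you use silently and should record is well-definedness of~$B$, i.e.\ uniqueness of the decomposition $g=hl$ with $h\in H$, $l\in L$ (from $H\cap L=\{e\}$). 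Your closing remark is also apt: unlike Proposition~\ref{triangular}, no commutation hypothesis such as $[H,L]=e$ is needed, precisely because $\pi$ discards the twisted $H$-component.
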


Thus, we have the circle product equal to
\begin{multline*}
x\circ y
 = (hl)\circ (h'l')
 = hlB(hl)h'l'B(hl)^{-1}
 = hlC(l)h'l'C(l)^{-1}
 = hh'^{C(l)^{-1}l^{-1}}lC(l)l'C(l)^{-1} \\
 = hh'^{C(l)^{-1}l^{-1}}(l\circ_C l'),
\end{multline*}
and $G^{(\circ)}\cong H\rtimes L_C$.

\begin{proposition}[\cite{BG}] \label{prop:RB-Hom}
a) Let $(G, B)$ be a Rota---Baxter group and $B$ be an automorphism of $G$. Then $G$ is abelian.

b) If $G$ is a group and $H$ is its abelian subgroup,
then any homomorphism (or antihomomorphism) $B\colon G \to H$ is a Rota---Baxter operator.
\end{proposition}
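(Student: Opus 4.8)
The plan is to treat both parts by substituting the assumed (anti)homomorphism property of $B$ directly into the Rota---Baxter identity~\eqref{RB} and then exploiting commutativity; no auxiliary constructions are needed.

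For part a), I would begin from~\eqref{RB}, i.e. $B(g)B(h) = B(gB(g)hB(g)^{-1})$, and expand the right-hand side using that $B$ is a homomorphism:
$$
B(gB(g)hB(g)^{-1}) = B(g)\,B(B(g))\,B(h)\,B(B(g))^{-1}.
$$
Cancelling the leading factor $B(g)$ on both sides yields $B(h) = B(B(g))\,B(h)\,B(B(g))^{-1}$, that is, $B(B(g))$ commutes with $B(h)$ for all $g,h\in G$. The crucial step is then the following surjectivity argument: since $B$ is an automorphism, the map $g\mapsto B(B(g))$ is again an automorphism and hence surjective, while $B$ itself is surjective. Thus $B(B(g))$ ranges over all of $G$ as $g$ varies and $B(h)$ ranges over all of $G$ as $h$ varies, so every element of $G$ commutes with every element, and $G$ is abelian.

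For part b), I would simply verify~\eqref{RB} by the same expansion. If $B\colon G\to H$ is a homomorphism into the abelian subgroup $H$, then
$$
B(gB(g)hB(g)^{-1}) = B(g)\,B(B(g))\,B(h)\,B(B(g))^{-1},
$$
and since $B(g),B(B(g)),B(h)$ all lie in the abelian group $H$, the factors $B(B(g))^{\pm1}$ commute past $B(h)$ and cancel, leaving $B(g)B(h)$, as required. If instead $B$ is an antihomomorphism, I would first record that $B(e)=e$ and hence $B(x^{-1}) = B(x)^{-1}$, and then reverse the order of the four factors of the argument to obtain
$$
B(gB(g)hB(g)^{-1}) = B(B(g))^{-1}\,B(h)\,B(B(g))\,B(g);
$$
once more all factors lie in the abelian $H$, so reordering collapses this to $B(g)B(h)$.

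I do not anticipate a genuine obstacle in either part, as the content is entirely the interplay of the (anti)homomorphism property with commutativity. The one point deserving care in part a) is that it is the \emph{composite} $g\mapsto B(B(g))$, not $B$ alone, whose image must exhaust $G$; this is precisely what bijectivity of $B$ guarantees. In part b) the only subtlety is the antihomomorphism case, where the identity $B(x^{-1})=B(x)^{-1}$ must be established before the factors are reordered.
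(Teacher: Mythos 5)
Your proof is correct. A caveat on the comparison: the paper itself gives no proof of this proposition --- it is quoted from~\cite{BG} --- so there is no internal argument to match against, but your direct verification is exactly the kind of computation one expects and, for part~b), the standard one: you correctly isolate the only delicate point in the antihomomorphism case, namely that $B(e)=e$ and $B(x^{-1})=B(x)^{-1}$ must be established before the four factors are reordered inside the abelian subgroup~$H$. For part~a) your argument is sound --- cancel $B(g)$, conclude that $B(B(g))$ centralizes $B(h)$, and invoke surjectivity of both $B$ and the composite $B\circ B$ --- but there is a slightly more economical route that uses the hypothesis on the \emph{left} side of~\eqref{RB} instead of the right: since $B$ is a homomorphism, $B(g)B(h)=B(gh)$, so~\eqref{RB} reads $B(gh)=B\bigl(gB(g)hB(g)^{-1}\bigr)$, and injectivity of $B$ cancels $B$ entirely, giving $gh = gB(g)hB(g)^{-1}$, i.e.\ $B(g)\in Z(G)$ for every $g$; surjectivity of $B$ then forces $G=Z(G)$. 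That version trades your commutation analysis for a single appeal to injectivity, which is marginally cleaner, but both arguments are complete and correct, and your closing remark about needing surjectivity of the composite $B\circ B$ (guaranteed since automorphisms compose to automorphisms) shows you saw the one point where your variant could have gone wrong.
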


Given a group~$G$ and its abelian subgroup~$H$ a homomorphism from~$G$ to~$H$ defines
an RB-operator on~$G$ by Proposition~\ref{prop:RB-Hom}.
In this case, we can not derive more information than given in the definition
formula for the product~$\circ$.

Let $R$ be a~Rota---Baxter operator of weight~1 on
the associative algebra equal to the direct sum of fields
$\Bbbk^n = \Bbbk e_1\oplus \Bbbk e_2\oplus \ldots \oplus \Bbbk e_n$, where $e_ie_j = \delta_{ij}e_i$.
A~linear operator $R(e_i) = \sum\limits_{k=1}^n r_{ik}e_k$,
$r_{ik}\in \Bbbk$, is an RB-operator of weight~1 on~$\Bbbk^n$
if and only if the following conditions are satisfied~\cite{AnBai,Braga}:

(1) $r_{ii} = 0$ and $r_{ik}\in\{0,1\}$
or $r_{ii} = -1$ and $r_{ik}\in\{0,-1\}$ for all $k\neq i$;

(2) if $r_{ik} = r_{ki} = 0$ for $i\neq k$,
then $r_{il}r_{kl} = 0$ for all $l\not\in\{i,k\}$;

(3) if $r_{ik}\neq0$ for $i\neq k$,
then $r_{ki} = 0$ and
$r_{kl} = 0$ or $r_{il} = r_{ik}$ for all $l\not\in\{i,k\}$.

Moreover, we may suppose that the matrix of $R$ is upper-triangular~\cite{Braga,SumOfFields}.

\begin{proposition}[\cite{BG}] \label{theo:directProduct}
Let $G^n = G\times G\times \ldots \times G$ and let $\Bbbk$ be a~field.
Let $R$ be a~Rota---Baxter operator of weight~1 on
$\Bbbk e_1\oplus \Bbbk e_2\oplus \ldots \oplus \Bbbk e_n$,
$R(e_i) = \sum\limits_{i=1}^n r_{ik}e_k$, and the matrix of $R$ is upper-triangular.
Let $\psi_2,\ldots,\psi_n\in\Aut(G)$. Then a~map $B\colon G^n\to G^n$
defined by the formula
\begin{equation} \label{RBOnDirectProduct}
B((g_1,\ldots,g_n)) = (t_1,\ldots,t_n),\quad
t_i = g_i^{r_{ii}}\psi_i\big(g_{i-1}^{r_{i-1\,i}}\psi_{i-1}\big(g_{i-2}^{r_{i-2\,i}}
 \ldots \psi_2\big(g_1^{r_{1i}}\big)\big)\big),\ i\geq2,
\end{equation}
is a Rota---Baxter operator on $G^n$.
\end{proposition}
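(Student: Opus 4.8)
The plan is to verify the defining identity \eqref{RB}, that is $B(g)B(h) = B\big(gB(g)hB(g)^{-1}\big)$, for arbitrary $g = (g_1,\dots,g_n)$ and $h = (h_1,\dots,h_n)$ in $G^n$. Since multiplication in $G^n$ is componentwise, it is enough to check \eqref{RB} coordinate by coordinate. Writing $B(g) = (t_1,\dots,t_n)$ with $t_i = t_i(g)$ as in \eqref{RBOnDirectProduct}, the $i$-th coordinate of $gB(g)hB(g)^{-1}$ is $g_i t_i(g) h_i t_i(g)^{-1}$, so the goal becomes the family of identities $t_i(g)\,t_i(h) = t_i\big(gB(g)hB(g)^{-1}\big)$, one for each $i$.

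The first reduction exploits upper-triangularity: since $r_{ji}=0$ for $j>i$, the coordinate $t_i$ depends only on $g_1,\dots,g_i$ and is assembled from the top-left $i\times i$ block of $R$ together with $\psi_2,\dots,\psi_i$. I would argue by induction on $n$. The point is that the top-left $(n-1)\times(n-1)$ submatrix is again upper-triangular and still obeys conditions (1)--(3), since these are inherited under passing to a subset of indices (condition (1) is imposed row by row, and the universally quantified conditions (2)--(3) only become easier to satisfy when the index range shrinks). Hence the first $n-1$ coordinates of $B$ constitute the analogous map on $G^{n-1}$, which is an RB-operator by the induction hypothesis (the base case $n=1$ being $B_0$ or $B_{-1}$ from Example~\ref{RBexamples}, as $t_1=g_1^{r_{11}}$ with $r_{11}\in\{0,-1\}$). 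This settles coordinates $1,\dots,n-1$ and leaves only the top coordinate $n$.

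For the top coordinate I would substitute the compound $c_j = g_j t_j(g) h_j t_j(g)^{-1}$ for the $j$-th argument ($j\le n$) in the nested expression \eqref{RBOnDirectProduct} defining $t_n$, and match the result against $t_n(g)\,t_n(h)$. The verification splits according to the diagonal value $r_{nn}\in\{0,-1\}$ and to which off-diagonal entries $r_{jn}\in\{-1,0,1\}$ of column $n$ are nonzero. The required cancellations are forced precisely by conditions (1)--(3): condition (1) makes the diagonal action mimic the operators $B_0$ and $B_{-1}$ of Example~\ref{RBexamples} (for instance $r_{jn}=1$ forces $r_{jj}=0$, so the substituted factor at that level reduces to the plain product that separates the $g$- and $h$-parts), whereas conditions (2) and (3) govern how the automorphism-twisted lower levels propagate up the tower, ensuring that the conjugating contributions carried by the $t_j(g)$ telescope away as one passes each $\psi_j$.

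I expect this top-coordinate computation to be the main obstacle. The difficulty is bookkeeping: after substitution a coordinate $j$ is replaced not by a single element but by the compound $c_j$, whose factor $t_j(g)=g_j^{r_{jj}}\psi_j(\cdots)$ itself carries lower-level nested data; these compounds are then raised to powers $r_{jn}$ (with $r_{jn}=-1$ reversing order) and threaded through the chain $\psi_n,\psi_{n-1},\dots,\psi_2$. Making the conjugating factors cancel across all levels at once is where the full strength of (2)--(3) is needed. Two devices should ease this. First, one may strengthen the induction hypothesis so that it also records the intermediate nested quantities evaluated at $gB(g)hB(g)^{-1}$, so that the top level is assembled from already-verified lower levels rather than expanded from scratch. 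Second, one may try to realize $B$ as an iteration of the triangular construction of Proposition~\ref{triangular}: the commuting hypotheses $[H,L]=[C(L),M]=e$ there hold automatically here, since elements supported on distinct factors of $G^n$ commute, and conditions (1)--(3) are then exactly the combinatorial constraints that make such an iteration consistent.
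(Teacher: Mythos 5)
Your reduction is sound as far as it goes: componentwise checking of \eqref{RB}, the heritability of conditions (1)--(3) under passing to leading principal submatrices, the base case $t_1=g_1^{r_{11}}$ with $r_{11}\in\{0,-1\}$, and the disposal of coordinates $1,\dots,n-1$ by induction are all correct. (Note that the paper itself offers no proof to compare against --- it imports the statement from~\cite{BG}.) But your proposal stops exactly where the content of the proposition begins: the identity $t_n(g)\,t_n(h)=t_n\big(gB(g)hB(g)^{-1}\big)$ for the top coordinate is predicted, never verified, and the ``telescoping'' you invoke depends on a structural lemma that must first be extracted from (2)--(3) and which you neither state nor prove. Concretely: let $S_n=\{j<n\mid r_{jn}\neq0\}$ and $j^{*}=\max S_n$. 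For $l<j$ both in $S_n$, triangularity gives $r_{jl}=0$, so condition (2) forces $r_{lj}\neq0$, and condition (3) applied to the pair $(l,j)$ with outside index $n$ forces $r_{ln}=r_{lj}$; applying (3) once more to pairs $(l,j^{*})$ shows that the nonzero part of column $j^{*}$ is \emph{exactly} the part of column $n$ below row $j^{*}$, with equal entries. Only then does the nested tail of $t_n$ below level $j^{*}$ coincide with the tail $w$ of $t_{j^{*}}=g_{j^{*}}^{r_{j^{*}j^{*}}}w$, so that by condition (1) the level-$j^{*}$ factor of $t_n$ equals $x_{j^{*}}t_{j^{*}}(x)$ (case $r_{j^{*}n}=1$, whence $r_{j^{*}j^{*}}=0$) or $t_{j^{*}}(x)$ (case $r_{j^{*}n}=-1=r_{j^{*}j^{*}}$), and the inductively known identity $t_{j^{*}}(u)=t_{j^{*}}(g)t_{j^{*}}(h)$ for $u=gB(g)hB(g)^{-1}$ makes $t_{j^{*}}(g)$ cancel, with a final two-case check over $r_{nn}\in\{0,-1\}$. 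That this lemma is genuinely needed, and that the deferred step is not mere bookkeeping, is shown by $n=3$, $r_{12}=r_{23}=1$, $r_{13}=0$, $r_{ii}=0$: this matrix is upper-triangular and satisfies your rowwise observation (1), but violates (3), and indeed $t_3(u)=\psi_3\big(g_2\psi_2(g_1)h_2\psi_2(g_1)^{-1}\big)\neq\psi_3(g_2h_2)=t_3(g)t_3(h)$ in general.

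Of your two rescue devices, the first (a strengthened induction recording the intermediate nested quantities) is the right instinct --- it is essentially the chain lemma above --- but as written it is a plan, not an argument. The second rests on a false premise: the subgroups relevant to \eqref{RBOnDirectProduct} are \emph{not} supported on distinct factors of $G^n$, so the commutation hypotheses of Proposition~\ref{triangular} are not automatic. Already for $n=2$ with $r_{11}=0$, $r_{12}=1$, $r_{22}=-1$, the operator $B(g_1,g_2)=(e,\,g_2^{-1}\psi_2(g_1))$ is the splitting operator of the exact factorization $G^2=HM$ with $H=\{(g,\psi_2(g))\mid g\in G\}$ a twisted diagonal and $M=\{e\}\times G$; here $[H,M]\neq e$ in general, and it is precisely this noncommutativity that the construction exploits. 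So an iteration of Proposition~\ref{triangular} would require graph-type subgroups and a separate verification of its hypotheses; nothing holds ``automatically,'' and until the top-coordinate computation is actually carried out, the proposition remains unproved in your write-up.
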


Analogously to~\cite{SumOfFields}, it is not difficult to state
that $G^n_B\cong G^n$.

Note that C. Tsang recently studied~\cite{Tsang2}
skew left braces $(A,\cdot,\circ)$ with $A^{(\cdot)} = G^n$
and $A^{(\circ)} \cong G^n$, where $G$ is non-abelian simple group.
Earlier~\cite{Tsang}, he showed that
there are exactly
$$
e(G,G)
 = 2^n(n|\Aut(G)|+1)^{n-1}
$$
regular subgroups of $\Hol(G^n)$ which are isomorphic to $G^n$.
It is interesting to compare this value with $2^n(n+1)^{n-1}$,
the number of all RB-operators of weight~1 on the algebra~$\Bbbk^n$~\cite{SumOfFields}.

In~\cite{Tsang2}, C. Tsang studied the number $b(G,n)$ of isomorphism classes
of skew left braces $(A,\cdot,\circ)$ such that
$A^{(\cdot)} \cong A^{(\circ)} \cong G^n$, where $G$ is a~finite non-abelian simple group.
It turns out that $b(G,n)$ does not depend on the choice of $G$ and so, it is a function on~$n$.
The first values $1,7,26,107,458,2058,\ldots$ of $b(G,n)$ computed in~\cite{Tsang2} with the help of Magma
coincide (except the first one) with the sequence A000151 from OEIS~\cite{OEIS},
which was shown in~\cite{SumOfFields} to be equal to the number of orbits
of RB-operators of weight~1 on the algebra~$\Bbbk^n$
under the action of the group $\Aut(\Bbbk^n)\cong S_n$.

Thus, we have the following natural questions.

\begin{question}
Let $G$ be a~finite non-abelian simple group.

a) Do RB-operators on $G^n$ arisen by~\eqref{RBOnDirectProduct}
from RB-operators of weight~1 on the algebra~$\Bbbk^n$ lying in different orbits
produce non-isomorphic skew left braces?

b) Do all skew left braces $(A,\cdot,\circ)$ such that
$A^{(\cdot)} \cong A^{(\circ)} \cong G^n$
are defined via RB-operators on $G^n$ by~\eqref{RBOnDirectProduct}?
\end{question}

\section{Skew left braces theory in terms of RB-operators}\label{TermsViaRB}

\subsection{Skew left braces and 1-cocycles}
Let $G$ and $\Gamma$ be groups and assume that
$\Gamma \times G \to G$,
$(\gamma, a) \mapsto \gamma \cdot a$,
is left action of $\Gamma$ on $G$ by automorphism.
A bijective {\it 1-cocycle} (or {\it crossed homomorphism})
is a~bijective map
$\pi \colon \Gamma \to G$ such that
$\pi(\gamma \delta)
 = \pi(\gamma)(\gamma \cdot \pi(\delta))$
for all $\gamma,\delta\in \Gamma$.

There is a connection between bijective 1-cocycles and
skew left braces~\cite[Proposition 1.11]{GV2017}).
If $G^{(\cdot)}$ is a~group and $\pi \colon \Gamma \to G$
is a bijective 1-cocycle, then for the operation~$\circ$ defined on $G$ as follows,
\begin{equation} \label{1-cocycle}
a \circ b = \pi(\pi^{-1}(a) \pi^{-1}(b)),\quad a,b \in G,
\end{equation}
we have that $(G, \cdot, \circ)$ is a~skew left brace.

Conversely, assume that $G$ is a~skew left brace.
Set $\Gamma = G$ with multiplication
$(a,b)\mapsto a\circ b$ and $\pi = \id$.
Then $a \mapsto \lambda_a$ is a group homomorphism
and hence $\Gamma$ acts on $G$ by automorphisms and
hence, $\pi \colon \Gamma \to G$ is a bijective 1-cocycle.

In particular case, when $G$ acts on $\Gamma = G$ by conjugation,
the inverse of a bijective 1-cocycle by~\eqref{1-cocycle}
is nothing more than an RB-operator on~$G$~\cite{Guo2020}.

\begin{proposition}
Let $(G,\cdot)$ be a group and $B \colon G \to G$ be a Rota---Baxter operator. Put
$x \circ_B y = xB(x)yB(x)^{-1}$. Then $(G,\cdot,\circ_B)$ is a two-sided skew brace
if and only if for every $g\in G$ the map
$\psi_g(x)\colon x\to [B(x)^{-1},g] = x\star g^{-1}$
is a~1-cocycle acting from~$G^{\op}$ to~$G^{\op}$.
\end{proposition}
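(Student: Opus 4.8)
The plan is to exploit that $(G,\cdot,\circ_B)$ is already a skew left brace by Proposition~\ref{RBToSLB}; hence it is a two-sided skew brace exactly when the right-brace identity~\eqref{RightBrace} also holds, i.e.\ when
\[
(b\cdot c)\circ_B a = (b\circ_B a)\cdot a^{-1}\cdot(c\circ_B a)
\]
for all $a,b,c\in G$. So the whole task is to show that this identity, quantified over all $a$, is equivalent to the assertion that every $\psi_g$ is a $1$-cocycle.

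First I would rewrite $\circ_B$ through the operation $x\star y = x^{-1}(x\circ_B y)y^{-1}$, so that $x\circ_B y = x(x\star y)y$ and, by Remark~\ref{tilde{G}Nilpotent}, $x\star y = [B(x)^{-1},y^{-1}]$. Substituting into the right-brace identity, the left-hand side becomes $(bc)\big((bc)\star a\big)a$, while on the right-hand side the inner $a\,a^{-1}$ cancels and leaves $b(b\star a)\,c(c\star a)\,a$. Cancelling the common $a$ on the right and then the common $b$ on the left collapses the identity to the \emph{star-form}
\[
(bc)\star a = (b\star a)^c\,(c\star a),\qquad u^c:=c^{-1}uc .
\]

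Next I would unwind the cocycle condition. Taking the conjugation action $u\mapsto u^{\gamma}=\gamma^{-1}u\gamma$ of $G^{\op}$ on $G^{\op}$ — which is a genuine \emph{left} action precisely because the opposite product reverses multiplication — and writing the opposite product as $u\odot v=v\cdot u$, the defining identity $\pi(\gamma\delta)=\pi(\gamma)\,(\gamma\cdot\pi(\delta))$ read with source and target $G^{\op}$ becomes $\psi_g(\delta\gamma)=\big(\psi_g(\delta)\big)^{\gamma}\psi_g(\gamma)$; renaming $\delta=b$, $\gamma=c$ this is $\psi_g(bc)=\big(\psi_g(b)\big)^c\,\psi_g(c)$. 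Since $\psi_g(x)=x\star g^{-1}$, setting $a=g^{-1}$ turns the star-form into exactly this relation, and as $a$ ranges over $G$ so does $g=a^{-1}$; therefore the right-brace identity holds for all $a,b,c$ if and only if $\psi_g$ is a $1$-cocycle for every $g$.

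The cancellation chain producing the star-form is routine. I expect the main obstacle to be the bookkeeping in the opposite groups: one must keep track simultaneously of the opposite product in the source of $\psi_g$ and in its target, and confirm that conjugation indeed supplies a left action of $G^{\op}$, so that the exponent $c$ attaches to $\psi_g(b)$ and the factors come out in the order $\big(\psi_g(b)\big)^c\psi_g(c)$ rather than reversed. Once these conventions are fixed consistently, the reduced right-brace identity and the reduced cocycle identity are literally the same.
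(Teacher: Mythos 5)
Your proof is correct and essentially coincides with the paper's: both reduce the right-brace identity by cancellation to the relation $\psi_g(bc)=\psi_g(b)^c\,\psi_g(c)$ (your star-form $(bc)\star a=(b\star a)^c(c\star a)$ with $a=g^{-1}$ is exactly the paper's commutator identity $b[B(ab)^{-1},c^{-1}]=[B(a)^{-1},c^{-1}]\,b\,[B(b)^{-1},c^{-1}]$), and then read this as the $1$-cocycle condition for the conjugation action of $G^{\op}$ on $G^{\op}$. Your phrasing via $\star$ instead of raw commutators, together with the explicit check that conjugation is a left action of $G^{\op}$, is only a cosmetic variant of the same argument.
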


\begin{proof}
By~Proposition~\ref{RBToSLB}, $G$ is a skew left brace.
Rewrite~\eqref{RightBrace} in terms of $B$,
$$
abB(ab)cB(ab)^{-1}
 = aB(a)cB(a)^{-1}c^{-1}bB(b)cB(b)^{-1},
$$
which is equivalent to the equality
$$
b[B(ab)^{-1},c^{-1}]
 = [B(a)^{-1},c^{-1}]b[B(b)^{-1},c^{-1}].
$$
Thus, $\psi_{c^{-1}}(ab) = \psi_{c^{-1}}(a)^b \psi_{c^{-1}}(b)$ and so,
$\psi_g(b*a) = \psi_g(b)*(b\cdot \psi_g(a))$ for every $a,b,g\in G^{\op}$.
Here $b*a = ab$ is the opposite product in~$G$.
\end{proof}

\subsection{Ideals}
A~non-empty subset $I$ of a~skew left brace $(G,\cdot,\circ)$ is said to be an {\it ideal} of~$G$~\cite{GV2017} if 

1) $\lambda_a(I) \subseteq I$ for all $a \in G$,

2) $I$ is normal in $(G, \cdot)$,

3) $I$ is normal in~$(G, \circ)$. 

For example, the kernel of any skew left brace homomorphism is an ideal.
The notion of ideal allows us to consider the quotient skew left brace~$G/I$. Note that the condition~1) is equivalent to the equality $a\circ I = aI$ for all $a\in G$.

A~non-empty subset $I$ of $G$ is called ({\it strong}) {\it left ideal} of~$G$ if 1) holds and $I$ is a~(normal) subgroup of $(G, \cdot)$.
Strong left ideals provide decompositions of the corresponding solutions to the Yang---Baxter equation~\cite{Jespers} and they are connected with intermediate fields of a~Galois field extension~\cite{Quasi-ideal}. Every left ideal is a skew left subbrace.

Inspired by Theorem~\ref{Embedding}, Remark~\ref{tilde{G}Nilpotent},
and the analogy with postalgebras, we apply the group~$\widetilde{G}$
and concerned map~$\psi\colon g\to (e,g)$ as the natural source of definitions of different classes of skew left braces.

\begin{definition}\label{DefViaDelta}
Let~$G$ be a~skew left brace.

a) \cite{NilpotentBraces} 
Let $\mathcal{X}$ be a class of groups.
We say that a~non-empty subset $I\subseteq G$ is of type~$\mathcal{X}$
if $I$ is of type $\mathcal{X}$ in the group~$(G,\cdot)$.

b) Let $\mathcal{X}$ be a class of groups.
We say that a~non-empty subset $I\subseteq G$ is of strong type $\mathcal{X}$
if $\psi(I)$ is of type $\mathcal{X}$ in the group~$\widetilde{G}$.
\end{definition}

Note that a lot of notions of skew left braces theory were interpreted in~\cite{BR} via the group~$\widetilde{G}$. 
The next statement was mentioned in~\cite[p.\,8]{BR} in one direction.

\begin{proposition} \label{IdealCriterion}
Let $G$ be a skew left brace.
A~non-empty subset~$I\subseteq G$ is a~strong left ideal of~$G$ if and only if
$\psi(I)$ is a normal subgroup of~$\widetilde{G}$.
\end{proposition}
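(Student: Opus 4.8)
The plan is to reduce the statement to a single conjugation computation inside $\widetilde{G}$, after which both defining conditions of a strong left ideal fall out by specializing the conjugating element. First I would record that $\widetilde{G} = G\ltimes G$ is exactly the semidirect product of $(G,\cdot)$ by $(G,\circ)$ along the homomorphism $\lambda\colon(G,\circ)\to\Aut(G,\cdot)$: indeed \eqref{CircProduct} reads $(x,y)*(z,t)=(x\circ z,\,y\lambda_x(t))$, so the first coordinate multiplies in $(G,\circ)$ and the second in $(G,\cdot)$ twisted by $\lambda_x$. From $\lambda_e=\id$ and the fact that $\lambda$ is a homomorphism one checks that $(e,e)$ is the identity and that the inverse of a general element is
$$
(x,y)^{*(-1)} = \big(x^{\circ(-1)},\, \lambda_{x^{\circ(-1)}}(y^{-1})\big).
$$
I would also note that the restriction of $\psi$ is a group isomorphism from $(G,\cdot)$ onto the subset $M=\{(e,y)\mid y\in G\}$, since $\psi(g)*\psi(h)=(e,g\cdot h)$; consequently $\psi(I)$ is a subgroup of $\widetilde{G}$ precisely when $I$ is a subgroup of $(G,\cdot)$.

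The key step is the conjugation of a typical element $\psi(g)=(e,g)$ by an arbitrary $(x,y)\in\widetilde{G}$. Carrying out the product $(x,y)*(e,g)*(x,y)^{*(-1)}$ and using the homomorphism property $\lambda_x\lambda_{x^{\circ(-1)}}=\lambda_{x\circ x^{\circ(-1)}}=\lambda_e=\id$ to collapse the $\lambda$-terms, I expect to obtain
$$
(x,y)*(e,g)*(x,y)^{*(-1)} = \big(e,\; y\,\lambda_x(g)\,y^{-1}\big).
$$
Thus conjugation preserves the subgroup $M$ and, on the second coordinate, sends $g$ to $y\lambda_x(g)y^{-1}$. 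Normality of $\psi(I)$ in $\widetilde{G}$ is therefore equivalent to the requirement that $y\lambda_x(g)y^{-1}\in I$ for all $x,y\in G$ and all $g\in I$.

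It remains to match this single requirement with the two axioms of a strong left ideal. Specializing $x=e$ gives $ygy^{-1}\in I$ for all $y$, i.e.\ $I$ is normal in $(G,\cdot)$; specializing $y=e$ gives $\lambda_x(g)\in I$ for all $x$, i.e.\ $\lambda_x(I)\subseteq I$. Conversely, if $I$ is a normal subgroup of $(G,\cdot)$ with $\lambda_x(I)\subseteq I$ for every $x$, then $\lambda_x(g)\in I$ and hence $y\lambda_x(g)y^{-1}\in I$ by normality, so $\psi(I)$ is normal. Combining this with the subgroup observation from the first paragraph closes both implications: in the forward direction a strong left ideal is a subgroup of $(G,\cdot)$, so $\psi(I)$ is a subgroup, and the two axioms make it normal; in the backward direction normality of $\psi(I)$ presupposes it is a subgroup, forcing $I$ to be a subgroup of $(G,\cdot)$, and then the specializations recover both axioms.

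There is no genuinely hard step here; the whole content is the conjugation identity, and the only place that demands care is the bookkeeping in the semidirect product --- in particular keeping the two operations $\cdot$ and $\circ$ separate and invoking the homomorphism property of $\lambda$ at the right moment to cancel the twists. I would present the inverse formula and the conjugation computation explicitly and let the equivalence of the two conditions with the strong-left-ideal axioms be read off by the substitutions $x=e$ and $y=e$.
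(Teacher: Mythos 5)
Your proof is correct and takes essentially the same route as the paper: both arguments reduce the statement to conjugating $\psi(i)=(e,i)$ inside $\widetilde{G}$ via the explicit operation \eqref{CircProduct} and then matching the outcome against the two axioms of a strong left ideal. The only cosmetic difference is that you compute the conjugation by a general element $(x,y)$ (using your inverse formula $(x,y)^{*(-1)}=\bigl(x^{\circ(-1)},\lambda_{x^{\circ(-1)}}(y^{-1})\bigr)$, which is correct) and then specialize to $x=e$ and $y=e$, whereas the paper conjugates only by the elements $(e,a)$ and $(a,e)$, implicitly using that these generate $\widetilde{G}$ since $(x,y)=(e,y)*(x,e)$; your version is marginally more self-contained and also makes explicit the subgroup bookkeeping ($\psi(I)$ is a subgroup iff $I\leq G^{(\cdot)}$) that the paper leaves tacit.
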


\begin{proof}
Let $a\in G$ and $i\in I$.
The condition that $\psi(I)$ is a normal subgroup of~$\widetilde{G}$ is equivalent
to the following two inclusions fulfilled in~$\widetilde{G}$,
$$
(e,a)^{*(-1)}*(e,i)*(e,a)\in (e,I), \quad
(a,e)^{*(-1)}*(e,i)*(a,e)\in (e,I).
$$
By~\eqref{CircProduct}, we obtain that $a^{-1}ia\in I$
and $\lambda_{a^{\circ(-1)}}(i)\in I$.
Since~the elements $a$ and $i$ have been chosen arbitrary,
it means that $\psi(I)$ is a normal subgroup of~$\widetilde{G}$
if and only if~$I$ is a~normal subgroup of $G^{(\cdot)}$ and
$\lambda_x(I)\subseteq I$ for all $x\in G$.
It is exactly the definition of a~strong left ideal in~$G$.
\end{proof}

\begin{proposition}
Let $G(B) = (G,\cdot,\circ)$ be a skew left brace obtained by an RB-operator~$B$
on $G^{(\cdot)}$ and $I\subseteq G$. Then 

a) $I$~is an ideal of~$G(B)$ if and only if $I$~is a~normal subgroup of both
$G^{(\circ)}$ and $G^{(\cdot)}$.

b) $I$~is a left ideal of~$G(B)$ if and only if $I$ is a subgroup of $G^{(\cdot)}$ normalized by $\Imm(B)$.
\end{proposition}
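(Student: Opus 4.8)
The statement concerns a skew left brace $G(B)$ arising from an RB-operator $B$ on $G^{(\cdot)}$, and I want to characterize when a subset $I\subseteq G$ is (a) an ideal and (b) a left ideal, purely in terms of the additive and multiplicative group structures. The key computational fact I will exploit throughout is that for $G(B)$ the lambda map is the inner automorphism $\lambda_a(b)=B(a)bB(a)^{-1}$, as recorded just before Proposition~\ref{RBToSLB}'s consequences in the excerpt. This turns the abstract condition $\lambda_a(I)\subseteq I$ for all $a\in G$ into the concrete condition that $I$ is stable under conjugation by every element of $\Imm(B)$, which is the heart of both parts.

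**Part (b): left ideal.** Recall that $I$ is a left ideal of a skew left brace iff condition~1) holds ($\lambda_a(I)\subseteq I$ for all $a\in G$) and $I$ is a subgroup of $(G,\cdot)$. First I would substitute the explicit form $\lambda_a(b)=B(a)bB(a)^{-1}$. Then condition~1) says precisely that $B(a)\,I\,B(a)^{-1}\subseteq I$ for every $a\in G$; as $a$ ranges over $G$, the element $B(a)$ ranges over $\Imm(B)$, so this is exactly the statement that $I$ is normalized by $\Imm(B)$. Combining this with the requirement that $I$ be a subgroup of $G^{(\cdot)}$ gives the claimed equivalence directly. I expect no obstacle here beyond being careful that ``normalized by $\Imm(B)$'' means invariance under conjugation by each element of the image, not by the subgroup it generates---though since $I$ being conjugation-stable under each generator gives stability under the generated subgroup, these coincide.

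**Part (a): ideal.** An ideal additionally requires $I$ to be normal in $(G,\circ)$ and normal in $(G,\cdot)$, on top of condition~1). The plan is to show that, for braces of the form $G(B)$, condition~1) is automatically subsumed once we assume normality in $G^{(\circ)}$ and in $G^{(\cdot)}$; this is what collapses the three defining conditions into the two stated ones. Concretely, I would use the identity $a\circ b = aB(a)bB(a)^{-1}\cdot$ together with $\lambda$-normality: normality in $G^{(\circ)}$ means $a^{\circ(-1)}\circ i\circ a\in I$ for all $a$, and I would expand this via~\eqref{R-product} and compare with the $G^{(\cdot)}$-normality condition $a^{-1}ia\in I$. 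The expected mechanism is that the difference between $\circ$-conjugation and $\cdot$-conjugation is measured exactly by an application of $\lambda$, so that once both normalities hold, the extra factor $\lambda_a$ is forced into $I$ as well; conversely, starting from condition~1) and $\cdot$-normality one reconstructs $\circ$-normality.

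**The main obstacle.** The delicate point is the bookkeeping in part (a): verifying that $\circ$-normality together with $\cdot$-normality exactly reproduces condition~1), with no residual hypothesis left over. The cleanest route is to compute $a^{\circ(-1)}\circ i \circ a$ explicitly in terms of $B(a)$, $i$, and ordinary conjugation, using $a^{\circ(-1)} = B(a)^{-1}a^{-1}B(a)$ which follows from the $\circ$-group structure, and then to factor the result into a product of a $\lambda$-image and a $\cdot$-conjugate of $i$. I would want to double-check that the $\lambda_a(i)$ appearing is genuinely $B(a)iB(a)^{-1}$ and that the remaining factors are conjugates of elements already known to lie in $I$, so that membership propagates correctly in both directions of the ``if and only if.'' Once that single algebraic identity is pinned down, both implications follow by the same rearrangement read forwards and backwards.
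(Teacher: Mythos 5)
Your part (b) is correct and coincides with the paper's argument: since $\lambda_a(b)=B(a)bB(a)^{-1}$, condition~1) in the definition of a left ideal is literally the statement that $I$ is normalized by every element of $\Imm(B)$, and together with $I$ being a subgroup of $G^{(\cdot)}$ this is the claimed equivalence. Note, moreover, that this same observation is the \emph{entire} content of part (a) as well: the only nontrivial implication there is that conditions 2) and 3) force condition 1), and this is immediate because $\lambda_a$ is an inner automorphism of $G^{(\cdot)}$ (conjugation by the element $B(a)\in G$), so any subgroup normal in $G^{(\cdot)}$ is automatically $\lambda$-invariant. That one line is the paper's whole proof of (a); the ``only if'' direction requires nothing, as normality in both groups is simply read off from the definition of an ideal.

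Your plan for part (a), by contrast, contains a genuine error. You propose to expand $\circ$-conjugation and claim that ``the difference between $\circ$-conjugation and $\cdot$-conjugation is measured exactly by an application of $\lambda$,'' and further that ``starting from condition 1) and $\cdot$-normality one reconstructs $\circ$-normality.'' Neither is right. Using $a^{\circ(-1)}=B(a)^{-1}a^{-1}B(a)$ and $B(a^{\circ(-1)})=B(a)^{-1}$ (as $B\colon (G,\circ)\to(G,\cdot)$ is a homomorphism), the actual identity is
\begin{equation*}
a^{\circ(-1)}\circ i\circ a
 = B(a)^{-1}\bigl((a^{-1}ia)\,[a,B(i)^{-1}]\bigr)B(a),
\qquad [a,B(i)^{-1}] = a^{-1}B(i)aB(i)^{-1},
\end{equation*}
so the discrepancy between the two conjugations is the commutator factor $[a,B(i)^{-1}]$, which involves $B$ applied to the element $i$ itself and is controlled by none of your hypotheses; it is not a $\lambda$-image. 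Consequently your asserted converse is false: take $G=S_3\times S_3$ with the Rota---Baxter operator $B((g_1,g_2))=(e,g_1)$ (a special case of the paper's Proposition on $G^n$, with $R(e_1)=e_2$, $R(e_2)=0$, $\psi_2=\id$) and $I=A_3\times\{e\}$. Then $I$ is normal in $G^{(\cdot)}$, hence satisfies condition 1), yet for $i=((123),e)$ and $a=(e,(12))$ the element $a^{\circ(-1)}\circ i\circ a$ has a nontrivial $3$-cycle in its second coordinate, so $I$ is not normal in $G^{(\circ)}$. Had your converse been true, condition 3) would be redundant and the proposition would read ``$I$ is an ideal iff $I$ is normal in $G^{(\cdot)}$,'' which this example refutes; in any case that implication is not what the converse requires, since $\circ$-normality is part of the definition of an ideal and needs no derivation. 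The repair is simple: drop the $\circ$-expansion entirely and apply the observation you already made in part (b), namely that normality of $I$ in $G^{(\cdot)}$ yields $\lambda_a(I)\subseteq I$ because each $\lambda_a$ is conjugation by $B(a)$.
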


\begin{proof}
a) If $I$ is an ideal of~$G$, then $I$~is a~normal subgroup
in both $G^{(\cdot)}$ and $G^{(\circ)}$
and also $\lambda_a(I) \subseteq I$ for all $a \in G$.
The last condition means that $B(a)iB(a)^{-1}\in I$
for all $a\in G$, $i \in I$. It follows from normality of~$I$ in $G^{(\cdot)}$.

b) It follows from the condition $B(a)iB(a)^{-1}\in I$ holding for all $a\in G$, $i \in I$.
\end{proof}

Let us introduce as far as we know the new notion of the left center of a~skew left brace.

\begin{definition}
Let $(G, \cdot, \circ)$ be a~skew left brace.
The {\it left center} of~$G$ is defined as follows,
$$
Z_l(G)
 = \{c\in G\mid c\in Z(G^{(\cdot)}),\,
 gc = g\circ c\mbox{ for all }g\in G)\}.
$$
\end{definition}

By the following statement and by Proposition~\ref{IdealCriterion}, 
$Z_l(G)$ is a strong left ideal in~$G$.

\begin{proposition}
a) Let $G$ be a skew left brace. Then $\psi(Z_l(G)) = Z(\widetilde{G})\cap \psi(G)$.

b) Let $G(B) = (G,\cdot,\circ)$ be a~skew left brace
obtained by an RB-operator~$B$ on $G^{(\cdot)}$. Then
$Z_l(G(B)) = Z(G^{(\cdot)})$.
\end{proposition}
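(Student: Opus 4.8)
The plan is to prove the two parts separately, with part a) being the structural statement (the left center is exactly the intersection of the center of the enveloping group~$\widetilde{G}$ with the embedded copy of~$G$), and part b) being the concrete computation in the special case when the skew left brace comes from an RB-operator.

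For part a), I would unfold both defining conditions of $Z_l(G)$ through the embedding $\psi\colon g\mapsto(e,g)$ and the product~\eqref{CircProduct} on $\widetilde{G}$. An element $(e,c)$ lies in $Z(\widetilde{G})$ if and only if it commutes with every generator, and since $\widetilde{G}=H*L$, it suffices to test commutation against elements of the form $(a,e)$ and $(e,a)$ (equivalently against $(a,a)$ and $(a,e)$). I would compute $(e,c)*(e,a)=(e,ca)$ versus $(e,a)*(e,c)=(e,ac)$; commuting with all of $M=\{(e,a)\}$ forces $c\in Z(G^{(\cdot)})$. Then I would compute $(a,e)*(e,c)$ versus $(e,c)*(a,e)$ using~\eqref{CircProduct}: one side gives $(a,\lambda_a(c))$ and the other $(a, c\,\lambda_a(\text{...}))$, so that commuting with $L$ translates, after using $a\circ c=a\lambda_a(c)$, precisely into the condition $gc=g\circ c$ for all $g$. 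Matching both conditions against the definition of $Z_l(G)$ yields the equality $\psi(Z_l(G))=Z(\widetilde{G})\cap\psi(G)$. The main care here is bookkeeping: keeping straight which factor the $\lambda$-action hits and confirming that testing against the two subgroups $H$ and $L$ really captures centrality in all of $\widetilde{G}$, which it does because $H*L=\widetilde{G}$.

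For part b), when the brace is $G(B)$ the $\lambda$-map is the inner automorphism $\lambda_g(h)=B(g)hB(g)^{-1}$, as recorded just before Proposition~3.5. I would substitute this into the definition of $Z_l$. The condition $gc=g\circ c=gB(g)cB(g)^{-1}$ simplifies, after cancelling~$g$, to $c=B(g)cB(g)^{-1}$, i.e. $c$ commutes with $B(g)$ for every~$g\in G$, so $c$ centralizes $\Imm(B)$. Combined with the first requirement $c\in Z(G^{(\cdot)})$, and since $Z(G^{(\cdot)})$ already commutes with everything (in particular with $\Imm B$), the second condition becomes automatic. Hence $Z_l(G(B))=Z(G^{(\cdot)})$.

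The only genuine obstacle is in part a): verifying that the centrality test against $H$ and $L$ reproduces exactly the two clauses of the definition, and in particular that the $\lambda$-term arising from $(a,e)*(e,c)$ collapses correctly to the additive condition $gc=g\circ c$ rather than to a strictly weaker or stronger statement. Part b) is then a short direct computation, and the concluding remark that $Z_l(G)$ is a strong left ideal follows immediately from part a) together with Proposition~\ref{IdealCriterion}, since $Z(\widetilde{G})$ is normal in~$\widetilde{G}$.
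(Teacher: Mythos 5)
Your proposal is correct and follows essentially the same route as the paper: the paper's proof likewise unfolds centrality of $(e,s)$ in $\widetilde{G}$ via the product~\eqref{CircProduct}, getting $b\lambda_a(s)=sb$ for a generic element $(a,b)$ and then specializing $a=e$ to extract $s\in Z(G^{(\cdot)})$ and $\lambda_a(s)=s$, i.e.\ $as=a\circ s$ --- the same two conditions you obtain by testing against the subgroups $\{(a,e)\}$ and $\{(e,a)\}$ (which generate $\widetilde{G}$, so your reduction is sound). Part b) is verbatim the paper's one-line computation with $\lambda_g(h)=B(g)hB(g)^{-1}$.
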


\begin{proof}
a) An element~$(e,s)$ lies in $Z(\widetilde{G})$ if and only if
$(a,b)*(e,s) = (e,s)*(a,b)$ for all $a,b\in G$. It means that
$b\lambda_a(s) = s\lambda_e(b) = sb$ or equivalently,
$\lambda_a(s) = s^b$. Taking~$a = e$, we get that $s\in Z(G^{(\cdot)})$.
Thus, $s = \lambda_a(s) = a^{-1}(a\circ s)$ for all $a\in G$, i.\,e.,
$as = a\circ s$.

b) Let $c\in Z_l(G(B))$, then $c\in Z(G^{(\cdot)})$.
If $c\in Z(G^{(\cdot)})$, then $g\circ c = gB(g)cB(g)^{-1} = gc$ for all $g\in G$.
\end{proof}

In general, the left center is not an ideal of a skew left brace.

\begin{example}[I. Colazzo]
Let $A = \mathbb{Z}_3 = \langle a\rangle$ and 
$B = \mathbb{Z}_2 = \langle b\rangle$ be trivial braces. 
For a~homomorphism $\beta \colon B \to \Aut(A)$ defined as follows, $\beta(b)$ acts on~$A$ by the rule $x\to x^2$, 
we consider the semi-direct product $G = A\rtimes B$ with $(G,\cdot)\cong \mathbb{Z}_3\times\mathbb{Z}_2$ and $(G,\circ)\cong S_3$. 
It is not hard to show that $Z_l(G) = \{e\}\rtimes B$ which is not a normal subgroup of $(G,\circ)$.
\end{example}

Let $(G, \cdot, \circ)$ be a~skew left brace.
The {\it socle} of~$G$ was defined and actually reformulated in~\cite{GV2017} as follows,
$$
\Soc(G)
 = \{a\in G \mid a\in Z(G^{(\cdot)}),\,a\circ b = ab\mbox{ for all }b\in G\}.
$$

In comparison with the notion of the left center, one may call $\Soc(G)$ as the right center of~$G$.

We give the following description of socles in skew left braces
constructed by RB-operators.

\begin{proposition}
Let $G(B) = (G,\cdot,\circ_B)$ be a skew left brace obtained by an RB-operator~$B$
on $G^{(\cdot)}$. Then
$$
\Soc(G) = \{ a \in G \mid a,B(a) \in Z(G^{(\cdot)})\}
 = Z(G^{(\cdot)})\cap B^{-1}[Z(G^{(\cdot)})].
$$
\end{proposition}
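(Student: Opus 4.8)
The plan is to compute $\Soc(G)$ directly from its definition using the explicit multiplication $x\circ_B y = xB(x)yB(x)^{-1}$, so that both asserted equalities fall out of a single cancellation. Recall that $a\in\Soc(G)$ precisely when $a\in Z(G^{(\cdot)})$ and $a\circ_B b = ab$ holds for every $b\in G$. The second equality in the statement is merely a rewriting of the first: the set $\{a\mid a,B(a)\in Z(G^{(\cdot)})\}$ is by definition $Z(G^{(\cdot)})\cap B^{-1}[Z(G^{(\cdot)})]$, so it suffices to establish the first equality.

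First I would rewrite the defining condition $a\circ_B b = ab$. Substituting the formula for $\circ_B$ turns it into $aB(a)bB(a)^{-1} = ab$ for all $b\in G$. Cancelling the leading factor $a$ on both sides leaves $B(a)bB(a)^{-1}=b$, that is $B(a)b = bB(a)$, for all $b\in G$. By the definition of the center, this says exactly that $B(a)\in Z(G^{(\cdot)})$. Thus, for an element $a$ that already lies in $Z(G^{(\cdot)})$, the condition $a\circ_B b = ab$ holding for all $b$ is equivalent to $B(a)\in Z(G^{(\cdot)})$.

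Combining the two membership requirements gives $a\in\Soc(G)$ if and only if $a\in Z(G^{(\cdot)})$ together with $B(a)\in Z(G^{(\cdot)})$, which is the claimed set-theoretic description and hence the intersection formula. There is no serious obstacle here: the argument is essentially a one-line cancellation, and the only point demanding a moment's care is that the equivalence just derived uses the quantifier over all $b\in G$ to upgrade the relation $B(a)bB(a)^{-1}=b$ into genuine centrality of $B(a)$, rather than mere commutation with a single element.
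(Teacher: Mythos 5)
Your proposal is correct and matches the paper's own proof: the paper likewise notes that $a\circ_B b = ab$ cancels to $B(a)bB(a)^{-1}=b$ for all $b$, i.e.\ $B(a)\in Z(G^{(\cdot)})$, with the membership $a\in Z(G^{(\cdot)})$ coming directly from the definition of $\Soc(G)$. Your extra remarks (the quantifier over all $b$, and the second equality being a rewriting of the first) are accurate but routine.
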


\begin{proof}
The equality $a\circ b = ab$ is equivalent to $B(a)bB(a)^{-1} = b$,
i.\,e., $B(a)\in Z(G^{(\cdot)})$.
\end{proof}

In~\cite{Annulator}, the {\it annihilator} of a~skew left brace~$G$ was defined as $\Ann(G) = Z(G^{(\circ)})\cap\Soc(G)$. 
Thus, we may redefine it as $\Ann(G) = Z_l(G)\cap\Soc(G)$.
In~\cite{Bonatto}, $\Ann(G)$ was called as {\it center}, and it was applied to develop central nilpotency of skew left braces.

Define an upper central series of a skew left brace~$G$ as follows,
$\zeta_1(G) = Z_l(G)$ and $\zeta_{n+1}(G)$ is a strong left ideal of~$G$ such that
$$
\zeta_{n+1}(G)/\zeta_n(G) = \psi(G)/\zeta_n(G)\cap Z(\widetilde{G}/\zeta_n(G)).
$$
Here we identify $\zeta_i(G)$ with its image in $\widetilde{G}$ under the action of~$\psi$.

\begin{definition}
A~skew left brace $G$ is called {\it strong left nilpotent} if there exists~$m$ such that $\zeta_m(G) = G$.
\end{definition}

In~\cite{NilpotentBraces}, $\star$-left and -right nilpotency of skew left braces were defined. The left series of a~skew left brace~$G$ is the sequence
$G \subseteq G^2 \subseteq G^3 \subseteq \ldots$, where 
$G^{n+1} = G \star G^n$. Recall that $g\star h = g^{-1}(g\circ h)h^{-1}$.
A skew left brace~$G$ is said to be {\it left $\star$-nilpotent} if there is a~positive natural~$n$ such that $G^n = \{e\}$. 

\begin{proposition}
Let $G$ be a strong left nilpotent skew left brace.
Then $G$ is left $\star$-nilpotent and $G^{(\cdot)}$ is nilpotent.
\end{proposition}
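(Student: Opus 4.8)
The plan is to connect the three notions---strong left nilpotency, left $\star$-nilpotency, and nilpotency of $G^{(\cdot)}$---through the single unifying object $\widetilde{G}$ and the embedding $\psi\colon g\mapsto(e,g)$, which by now is the workhorse of the whole section. The hypothesis is that $\zeta_m(G)=G$ for some $m$, which by the definition of the upper central series means (after identifying $\zeta_i(G)$ with $\psi(\zeta_i(G))\subseteq\widetilde G$) that $\psi(G)$ is reached by an ascending central series of $\widetilde G$ terminating in finitely many steps. The two conclusions are of different flavour, so I would treat them as two separate arguments sharing this common setup.

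First I would derive that $G^{(\cdot)}$ is nilpotent. The key observation is that $\psi(G)=M=\{(e,g)\mid g\in G\}$ is isomorphic to $G^{(\cdot)}$ as a group (this is exactly the identity $\psi(g)*\psi(h)=\psi(g\cdot h)$ established in the proof of Theorem~\ref{Embedding}). The condition $\zeta_m(G)=G$ says precisely that $\psi(G)$ lies inside the $m$-th term of the upper central series of $\widetilde G$ computed relative to the subgroups $\zeta_n(G)$; unwinding the defining relation $\zeta_{n+1}(G)/\zeta_n(G)=\psi(G)/\zeta_n(G)\cap Z(\widetilde G/\zeta_n(G))$, each successive quotient $\zeta_{n+1}(G)/\zeta_n(G)$ is central in $\widetilde G/\zeta_n(G)$, hence a fortiori central in the subgroup $\psi(G)/\zeta_n(G)$. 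Thus the chain $\{e\}=\zeta_0(G)\subseteq\zeta_1(G)\subseteq\cdots\subseteq\zeta_m(G)=\psi(G)$ is a central series of the group $\psi(G)\cong G^{(\cdot)}$, which forces $G^{(\cdot)}$ to be nilpotent.

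For left $\star$-nilpotency I would use Remark~\ref{tilde{G}Nilpotent}, which records the bridge identity $[(e,h),(g,e)]=(e,g\star h)$ inside $\widetilde G$. Under this identity the left series $G=G^1\supseteq G^2\supseteq\cdots$ with $G^{n+1}=G\star G^n$ translates into a descending chain of commutator-type subgroups of $\widetilde G$; concretely $\psi(G\star G^n)$ is generated by commutators $[(e,h),(g,e)]$ with $h$ ranging over $G^n$. The plan is to show by induction on $n$ that $\psi(G^{n+1})\subseteq[\widetilde G,\psi(G^n)]$, so that the strong central series collapsing---i.e.\ $\zeta_m(G)=\psi(G)$ sitting inside a terminating upper central series of $\widetilde G$---forces the corresponding lower-type series to reach $\{e\}$ after finitely many steps. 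Since $\psi$ is injective, $\psi(G^n)=\{e\}$ then yields $G^n=\{e\}$, which is exactly left $\star$-nilpotency.

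The main obstacle I anticipate is the bookkeeping in matching the two series in the second part: the upper central series $\zeta_n(G)$ is defined via centres of successive quotients of $\widetilde G$, whereas the left series $G^n$ is defined via the $\star$-operation, and translating between the ``central'' and the ``commutator'' descriptions requires care about whether the relevant commutators are taken in $\widetilde G$ or merely within $\psi(G)$. In particular one must verify that membership of $\psi(G)$ in a finite term of the upper central series of $\widetilde G$ (rather than of $\psi(G)$ alone) is strong enough to bound the length of the $\star$-series; this is where the word \emph{strong} in the hypothesis does real work, since the centrality is demanded in the ambient $\widetilde G$, not just in the additive group. Once the correspondence $[(e,h),(g,e)]=(e,g\star h)$ is leveraged carefully, the induction should close, but that step is where I expect the only genuine difficulty to lie.
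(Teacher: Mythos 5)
Your proposal is correct and takes essentially the same route as the paper: your first part (each quotient $\zeta_{n+1}(G)/\zeta_n(G)$ is central in $\widetilde{G}/\zeta_n(G)$, hence a fortiori in $\psi(G)/\zeta_n(G)$, so the $\zeta_i(G)$ form a central series of $G^{(\cdot)}$) is exactly the paper's first paragraph. Your second part, pushing the $\star$-series down the chain via the bridge identity $[(e,h),(g,e)]=(e,g\star h)$ to get $\psi(G^{k})\subseteq \zeta_{m-k+1}(G)$ and hence $G^{m+1}=\{e\}$, is precisely the paper's observation that the definition of $\zeta_{i+1}(G)$ yields $G\star\zeta_{i+1}(G)\subseteq\zeta_i(G)$, only with the standard upper-versus-lower central series comparison spelled out explicitly.
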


\begin{proof}
Since $Z_l(\widetilde{G}/\zeta_n(G)) \subset Z(\psi(G)/\zeta_n(G))$, 
we derive that $\zeta_1(G),\zeta_2(G),\ldots$ is a central series of the group~$G^{(\cdot)}$, and hence, $G^{(\cdot)}$ is nilpotent.

Suppose that $\zeta_m(G) = G$, then by the definition of $\zeta_i(G)$, we have $G\star \zeta_i(G)\subseteq \zeta_{i+1}(G)$. Thus, $G^{m+1} = \{e\}$ and $G$ is left $\star$-nilpotent.
\end{proof}

\begin{theorem}
Let $G$ be a strong left nilpotent skew left brace and let $I$ be a nontrivial strong left ideal in~$G$.
Then $Z_l(G)\cap I\neq \{e\}$.
\end{theorem}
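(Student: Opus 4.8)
The plan is to transport the classical group-theoretic fact---that a nontrivial normal subgroup of a nilpotent group meets the center nontrivially---into the enveloping group $\widetilde{G}$ by means of the embedding $\psi$. Everything is organized around the dictionary already set up in the excerpt: strong left ideals of $G$ correspond to normal subgroups of $\widetilde{G}$ by Proposition~\ref{IdealCriterion}, the left center corresponds to $\psi(Z_l(G)) = Z(\widetilde{G})\cap\psi(G)$, and strong left nilpotency says precisely that the subgroups $K_i := \psi(\zeta_i(G))$ form an ascending chain of normal subgroups of $\widetilde{G}$ that reaches $M := \psi(G)$ in finitely many steps.

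First I would record the structural facts inside $\widetilde{G}$. Put $K_0 = \{e\}$. The defining relation $\zeta_{n+1}(G)/\zeta_n(G) = (\psi(G)/\zeta_n(G))\cap Z(\widetilde{G}/\zeta_n(G))$ means that $K_{n+1}/K_n$ is central in $\widetilde{G}/K_n$, that is, $[\widetilde{G},K_{n+1}]\subseteq K_n$ for every $n$; the base case $K_1 = Z(\widetilde{G})\cap M$ is exactly the content of $\psi(Z_l(G)) = Z(\widetilde{G})\cap\psi(G)$. Since each $\zeta_i(G)$ is a strong left ideal, each $K_i$ is normal in $\widetilde{G}$ by Proposition~\ref{IdealCriterion}; likewise $N := \psi(I)$ is a nontrivial normal subgroup of $\widetilde{G}$, and $N\subseteq M$ because $I\subseteq G$.

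Next I would run the standard maximality argument. Let $j$ be the largest index with $N\cap K_j = \{e\}$; this is well defined, since $N\cap K_0 = \{e\}$ while $N\cap K_m = N\neq\{e\}$ using $\zeta_m(G)=G$, so $0\le j<m$ and $N\cap K_{j+1}\neq\{e\}$. Choose $x\in N\cap K_{j+1}$ with $x\neq e$. For every $w\in\widetilde{G}$ one has $[w,x]\in[\widetilde{G},K_{j+1}]\subseteq K_j$, and also $[w,x] = (w^{-1}x^{-1}w)x\in N$ by normality of $N$; hence $[w,x]\in N\cap K_j=\{e\}$. Thus $x$ commutes with all of $\widetilde{G}$, so $x\in Z(\widetilde{G})\cap M = K_1 = \psi(Z_l(G))$, while at the same time $x\in N=\psi(I)$. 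Since $\psi$ is injective, $\psi(Z_l(G))\cap\psi(I) = \psi(Z_l(G)\cap I)$, and the nontrivial element $x$ witnesses $Z_l(G)\cap I\neq\{e\}$.

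The commutator computations are routine once the dictionary is in place; the point requiring care---and the main obstacle---is checking that the translation is faithful. Concretely, one must verify that the recursion defining $\zeta_n(G)$ really produces the centrality inclusions $[\widetilde{G},K_{n+1}]\subseteq K_n$ together with the identification $K_1 = Z(\widetilde{G})\cap M$, and that both $I$ and every $\zeta_i(G)$ are strong left ideals, so that $N$ and all the $K_i$ are genuinely normal in $\widetilde{G}$. Granting these, the remaining steps are exactly the classical proof for nilpotent groups, now read off inside $\widetilde{G}$ and pulled back to $G$ through the injectivity of $\psi$.
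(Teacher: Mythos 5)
Your proof is correct and follows essentially the same route as the paper's: both run the classical nilpotent-group argument inside $\widetilde{G}$, locating the boundary index where $\psi(I)$ first meets the series $\psi(\zeta_i(G))$ nontrivially, using Proposition~\ref{IdealCriterion} for normality of $\psi(I)$ and the commutator inclusion $[\widetilde{G},\psi(\zeta_{k+1}(G))]\subseteq\psi(\zeta_k(G))$ to force a central element of $\widetilde{G}$ lying in $\psi(Z_l(G)\cap I)$. The only differences are cosmetic: you argue directly via a maximal index $j$ where the paper argues by contradiction, and you spell out the dictionary verifications that the paper treats as immediate from its definitions.
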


\begin{proof}
Suppose that $Z_l(G)\cap I = e$. Find $k\geq1$ such that $\zeta_k(G)\cap I = \{e\}$ and $\zeta_{k+1}(G)\cap I \neq \{e\}$.
Take $(e\neq)i\in \zeta_{k+1}(G)\cap I$. 
By the definition of $\zeta_{k+1}(G)$, we have $i\in Z(\widetilde{G}/\zeta_k(G))$.
Thus, $[i,g]\in \zeta_k(G)$ for all $g\in \widetilde{G}$. 
Since $I$ is a normal subgroup of~$\widetilde{G}$, we have also $[i,g]\in I$.
By the assumption, $\zeta_k(G)\cap I = \{e\}$, so, $[i,g] = e$ for all $g\in\widetilde{G}$.
Finally, we conclude that $i\in Z(\widetilde{G})$ and therefore, $i\in \zeta_1(G)\cap I$, a~contradiction.
\end{proof}

\subsection{$\lambda$-homomorphic skew left braces}

In~\cite{BNY-1}, $\lambda$-homomorphic skew left braces were introduced as the tool to construct skew left braces.

\begin{definition}[\cite{BNY-1}]
A skew left brace $(G, \cdot, \circ)$ is said to be a $\lambda$-{\it homomorphic
skew left brace} if the map $\lambda \colon (G, \cdot) \to \Aut (G,\cdot)$, defined
by $\lambda_a(b) = a^{-1}(a\circ b)$, $a,b\in G$, is a~homomorphism.
\end{definition}

For a~group $G = (G, \cdot)$, we desire to define
a~homomorphism $\lambda \colon G \to \Aut (G)$ such that
$(G, \cdot, \circ)$ is a~skew left brace, where $\circ$ is defined by
$a \circ b = a \cdot \lambda_a (b)$ for all $a, b \in G$.
The following result holds.

\begin{theorem}[\cite{BNY-1}] \label{t1}
Let~$G$ be a~group, $\lambda \colon G \to \Aut(G)$ be a~homomorphism.
The set $H_{\lambda} := \{(\lambda_a,a) \mid a \in G \}$
is a~subgroup of~$\Hol(G)$ if and only if
$$
[G, \lambda(G)]
 := \{ b^{-1} \lambda_a (b) \mid a, b \in G \} \subseteq \ker \lambda.
$$
Moreover, if  $H_{\lambda}$ is a subgroup, then it is regular, and
therefore by Theorem~\ref{gv2017} we get a~skew left brace $(G,\cdot,\circ)$,
where $\circ$ is defined by $a \circ b = a \lambda_a(b)$.
\end{theorem}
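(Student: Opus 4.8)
The plan is to realize $H_\lambda$ as the image of the injective map $\sigma\colon G\to\Hol(G)$, $\sigma(a)=(\lambda_a,a)$, and to decide when this image is closed under the group operations of $\Hol(G)$. Since $\lambda$ is a homomorphism, $\lambda_e=\id$, so $\sigma(e)=(\id,e)$ is the identity of $\Hol(G)$ and $H_\lambda$ is a nonempty subset containing it; moreover $\sigma$ is injective because the second coordinate recovers $a$. Thus an element of $H_\lambda$ is completely determined by its second coordinate, which is the observation I will use repeatedly.

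First I would compute the product in $\Hol(G)$: using the product rule and $\lambda_a\lambda_b=\lambda_{ab}$,
\[
\sigma(a)\sigma(b)=(\lambda_a\lambda_b,\,a\lambda_a(b))=(\lambda_{ab},\,a\lambda_a(b)).
\]
Because elements of $H_\lambda$ are determined by their second coordinate, this product lies in $H_\lambda$ exactly when $\lambda_{ab}=\lambda_{a\lambda_a(b)}$. Applying the homomorphism property once more to rewrite $\lambda_{a\lambda_a(b)}=\lambda_a\lambda_{\lambda_a(b)}$ and cancelling $\lambda_a$, this is equivalent to $\lambda_b=\lambda_{\lambda_a(b)}$, i.e. to $\lambda_{b^{-1}\lambda_a(b)}=\id$, that is $b^{-1}\lambda_a(b)\in\ker\lambda$. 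Requiring this for all $a,b$ is precisely $[G,\lambda(G)]\subseteq\ker\lambda$. Since a subgroup is in particular closed under multiplication, this already yields the ``only if'' direction and half of the ``if'' direction.

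To complete the ``if'' direction I would check closure under inverses. The inverse in $\Hol(G)$ is $(f,x)^{-1}=(f^{-1},f^{-1}(x^{-1}))$, so $\sigma(a)^{-1}=(\lambda_{a^{-1}},\lambda_{a^{-1}}(a^{-1}))$, which lies in $H_\lambda$ iff $\lambda_{a^{-1}}=\lambda_{\lambda_{a^{-1}}(a^{-1})}$. This is exactly the condition from the previous step specialized to the pair $(a^{-1},a^{-1})$ (one also uses that $\ker\lambda$ is normal, so that the two versions $b^{-1}\lambda_a(b)$ and $\lambda_a(b)b^{-1}$ define the same constraint). Hence under the hypothesis $H_\lambda$ is closed under both operations and is a subgroup.

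Finally, for regularity I would examine the action of $H_\lambda$ on $G$ at the point $e$: since $\lambda_x(e)=e$, we have $\sigma(x)\cdot e=x\lambda_x(e)=x$, so the orbit of $e$ is all of $G$ and the stabilizer of $e$ is trivial. A transitive action with a trivial point stabilizer is free and transitive, so $H_\lambda$ is regular, and the converse part of Theorem~\ref{gv2017}, applied with $(\pi_2|_{H_\lambda})^{-1}(a)=(\lambda_a,a)$, produces the skew left brace with $a\circ b=a\lambda_a(b)$. The main obstacle is the algebraic identification in the second step: recognizing that the closure condition $\lambda_{ab}=\lambda_{a\lambda_a(b)}$ collapses, via the homomorphism property, to the membership $b^{-1}\lambda_a(b)\in\ker\lambda$; everything else is routine bookkeeping in $\Hol(G)$.
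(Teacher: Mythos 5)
Your proof is correct. The paper states Theorem~\ref{t1} without giving a proof, citing~\cite{BNY-1}; your argument --- using that an element of $H_{\lambda}$ is determined by its second coordinate to reduce closure under the product of $\Hol(G)$ to $\lambda_{ab}=\lambda_{a\lambda_a(b)}$, hence to $b^{-1}\lambda_a(b)\in\ker\lambda$, checking inverses separately (rightly, since closure under products alone does not suffice for infinite groups), and obtaining regularity from the transitive orbit and trivial stabilizer of $e$ --- is the standard direct verification in $\Hol(G)$, with no gaps.
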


Connection between skew left braces defined by RB-groups
and $\lambda$-homomorphic skew left braces gives

\begin{proposition}
Let $G(B) = (G,\cdot,\circ_B)$ be a skew left brace obtained by an RB-operator~$B$
on $G^{(\cdot)}$. Then $G(B)$ is $\lambda$-homomorphic skew left brace
if and only if $B(ac)^{-1}B(a)B(c)\subseteq Z(G^{(\cdot)})$
for all $a,c\in G$.
\end{proposition}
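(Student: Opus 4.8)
The plan is to unwind the definition of \emph{$\lambda$-homomorphic} directly, using the explicit shape of the $\lambda$-map attached to $G(B)$. Recall from Proposition~\ref{RBToSLB} and the observation following it that for the brace $G(B)$ one has $\lambda_a(b) = a^{-1}(a\circ_B b) = B(a)bB(a)^{-1}$, so each $\lambda_a$ is the inner automorphism of $G^{(\cdot)}$ given by conjugation by $B(a)$. The map $\lambda$ is always a homomorphism from $(G,\circ_B)$ into $\Aut(G^{(\cdot)})$; the brace $G(B)$ is $\lambda$-homomorphic precisely when $\lambda$ is in addition a homomorphism from the additive group, i.e.\ $\lambda_{a\cdot c} = \lambda_a\lambda_c$ for all $a,c\in G$.

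First I would compute both sides of $\lambda_{ac} = \lambda_a\lambda_c$ as automorphisms of $G^{(\cdot)}$. The left-hand side sends $b$ to $B(ac)\,b\,B(ac)^{-1}$, namely conjugation by $B(ac)$, while the right-hand side sends $b$ to $B(a)B(c)\,b\,B(c)^{-1}B(a)^{-1}$, namely conjugation by $B(a)B(c)$. Thus the $\lambda$-homomorphic condition is exactly the equality, for all $a,c$, of two inner automorphisms of $G^{(\cdot)}$: conjugation by $B(ac)$ and conjugation by $B(a)B(c)$.

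Next I would invoke the elementary criterion that two inner automorphisms of a group, conjugation by $x$ and conjugation by $y$, coincide if and only if $x^{-1}y\in Z$ of the group. Applied with $x = B(ac)$ and $y = B(a)B(c)$, this reads $B(ac)^{-1}B(a)B(c)\in Z(G^{(\cdot)})$, which is precisely the condition in the statement. Letting $a,c$ range over all of $G$ yields the claimed equivalence in both directions at once.

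I do not expect a serious obstacle here: the argument is essentially a one-line translation once the inner-automorphism description of $\lambda_a$ is in hand. The only points requiring care are the convention for the order of composition in $\Aut(G^{(\cdot)})$, which fixes whether the relevant product is $B(a)B(c)$ rather than $B(c)B(a)$, and the harmless bookkeeping that $B(ac)^{-1}B(a)B(c)$ and its inverse $B(c)^{-1}B(a)^{-1}B(ac)$ define the same center-membership condition, since $Z(G^{(\cdot)})$ is closed under inversion.
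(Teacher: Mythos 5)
Your proposal is correct and follows essentially the same route as the paper: both compute $\lambda_{ac}$ and $\lambda_a\lambda_c$ as conjugations by $B(ac)$ and by $B(a)B(c)$ respectively, and reduce the $\lambda$-homomorphic condition to the equality of these inner automorphisms, i.e.\ to $B(ac)^{-1}B(a)B(c)\in Z(G^{(\cdot)})$. Your explicit remarks on composition order and on the inverse element $B(c)^{-1}B(a)^{-1}B(ac)$ are harmless refinements of the same one-line argument.
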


\begin{proof}
We have $\lambda_{ac}(b) = B(a c) b B(ac)^{-1}$.
On the other hand,
$$
\lambda_{a} \lambda_{c}(b)
 = \lambda_{a} ( B(c) b B(c)^{-1} )
 = B(a) B(c) b B(c)^{-1} B(a)^{-1}.
$$
Hence, $\lambda$ is homomorphism if and only if
$b^{B(ac)^{-1}} = b^{(B(a)B(c))^{-1}}$ for $a,b,c \in G$.
\end{proof}

\section{Yang---Baxter equation}

A~set-theoretical solution to the (quantum) Yang---Baxter equation (YBE)
on a~set~$X$ is a~bijective map $S\colon X\times X\to X\times X$ such that
\begin{equation} \label{SYBE}
(S \times \id) (\id \times S) (S \times \id)
 = (\id \times S) (S \times \id) (\id \times S).
\end{equation}

Let us represent $S\colon X\times X\to X\times X$ as 
$S(x,y) = (\sigma_x(y), \tau_y(x))$ for some functions
$\sigma_x, \tau_y \colon X \to X$, $x, y \in X$.
A solution~$S$ of~\eqref{SYBE} is called
{\it non-degenerate} if the maps $\sigma_x$ and $\tau_x$ are bijective for each $x\in X$.
A solution~$S$ is called {\it involutive} if $S^2 = \id$.
Define the map $P\colon X\times X\to X\times X$ as follows,
$P(x,y) = (y,x)$, it is a solution to~\eqref{SYBE}.

We prove that any Rota--Baxter group gives a set-theoretic solution
to the Yang---Baxter equation.

\begin{theorem}
Let $(G, \cdot, B)$ be a Rota---Baxter group.
Put $\lambda_a(b) = B(a) b B(a)^{-1}$ for $a, b \in G$, then
\begin{equation} \label{RBE}
S \colon G \times G \to G\times G,\quad
S(a, b) = (\lambda_a(b), a^{\lambda_a(b)B(\lambda_a(b))}),
\end{equation}
is a~non-degenerate set-theoretical solution to the Yang---Baxter equation.
Moreover, $S$ is involutive if and only if $ab = ba$ for all $a, b \in G$.
\end{theorem}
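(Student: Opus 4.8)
The plan is to identify $S$ with the non-degenerate solution of the Yang---Baxter equation that L.~Guarnieri and L.~Vendramin attach to an arbitrary skew left brace in~\cite{GV2017}, applied to the brace $G(B) = (G,\cdot,\circ_B)$ produced by Proposition~\ref{RBToSLB}. For a skew left brace $(A,\cdot,\circ)$ that solution is
\[
r(a,b) = \bigl(\lambda_a(b),\ \lambda_a(b)^{\circ(-1)}\circ a\circ b\bigr),\qquad \lambda_a(b) = a^{-1}(a\circ b).
\]
For the brace $G(B)$ one has $\lambda_a(b) = B(a)bB(a)^{-1}$, which is precisely the map in~\eqref{RBE}; thus the first coordinates of $r$ and $S$ already coincide, and the whole statement reduces to checking that the second coordinates agree and then quoting~\cite{GV2017}.

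To match the second coordinate I would set $c = \lambda_a(b) = B(a)bB(a)^{-1}$ and first record two facts: the $\circ_B$-inverse is $c^{\circ(-1)} = B(c)^{-1}c^{-1}B(c)$, and, since $B\colon (G,\circ_B)\to (G,\cdot)$ is a homomorphism by Proposition~\ref{prop:Derived}(c), one has $B(c^{\circ(-1)}) = B(c)^{-1}$. Expanding $c^{\circ(-1)}\circ_B a\circ_B b$ through $x\circ_B y = xB(x)yB(x)^{-1}$ and substituting these two facts cancels the middle $B(c)$-factors and leaves $B(c)^{-1}c^{-1}(a\circ_B b)B(c)$. The only genuine simplification is the cancellation $c^{-1}(a\circ_B b) = c^{-1}ac$, valid because $c^{-1} = B(a)b^{-1}B(a)^{-1}$ and $a\circ_B b = aB(a)bB(a)^{-1}$. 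Hence
\[
\lambda_a(b)^{\circ(-1)}\circ_B a\circ_B b = B(c)^{-1}c^{-1}acB(c) = a^{cB(c)} = a^{\lambda_a(b)B(\lambda_a(b))},
\]
so $S = r$ and, by~\cite{GV2017}, $S$ is a non-degenerate set-theoretical solution of~\eqref{SYBE}.

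For the involutivity claim I would compute $S^2$ directly. Writing $S(a,b) = (c,d)$ with $c = B(a)bB(a)^{-1}$ and $d = (cB(c))^{-1}a(cB(c))$, a one-line conjugation gives $\lambda_c(d) = B(c)dB(c)^{-1} = c^{-1}ac = a^{c}$, so the first coordinate of $S^2(a,b)$ is $a^{c}$. If $G$ is abelian then $c = b$ and $d = a$, so $S(a,b) = (b,a) = P(a,b)$ is the flip and $S^2 = \id$. Conversely, if $S$ is involutive then $a^{c} = a$ for all $a,b\in G$; fixing $a$ and letting $b$ vary, the element $c = B(a)bB(a)^{-1}$ ranges over all of $G$ because conjugation by $B(a)$ is a bijection, so $a$ commutes with every element of $G$; as $a$ was arbitrary, $G$ is abelian.

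The step I expect to demand the most care is the second-coordinate identification: it hinges on the homomorphism property $B(c^{\circ(-1)}) = B(c)^{-1}$ from Proposition~\ref{prop:Derived}(c) and on the cancellation $c^{-1}(a\circ_B b) = c^{-1}ac$. An alternative avoiding~\cite{GV2017}, namely verifying~\eqref{SYBE} for $S$ by brute force, is feasible but markedly more laborious, so routing the argument through the induced brace $G(B)$ is the economical path.
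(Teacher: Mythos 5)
Your proposal is correct and takes essentially the same route as the paper: both pass to the induced skew left brace $G(B)$ of Proposition~\ref{RBToSLB}, invoke Theorem~3.1 of~\cite{GV2017}, and simplify the second coordinate to $a^{\lambda_a(b)B(\lambda_a(b))}$; your circle-form $\lambda_a(b)^{\circ(-1)}\circ a\circ b$ together with $B(c^{\circ(-1)})=B(c)^{-1}$ is an identically equivalent rewriting of the conjugation form~\eqref{BraceToYBE} that the paper computes with directly. The only real deviation is that the paper also quotes the involutivity criterion from~\cite{GV2017}, whereas you verify $S^2=\id$ if and only if $G$ is abelian by a direct (and correct) computation --- a self-contained extra, not a different method.
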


\begin{proof}
In \cite[Theorem 3.1]{GV2017}, it was proved that if $(G,\cdot, \circ)$
is a~skew left brace, then
\begin{equation} \label{BraceToYBE}
S(a, b) = (\lambda_a(b), \lambda^{-1}_{\lambda_a(b)}((a \circ b)^{-1} a(a \circ b)),
\end{equation}
is a~non-degenerate set-theoretical solution to the Yang---Baxter equation.
Moreover, $S$~is involutive if and only if $ab = ba$ for all $a, b \in G$.

In the Rota---Baxter group $(G, \cdot, B)$, the map $\lambda_a$
equals the conjugation by $B(a)^{-1}$,
so $\lambda_a^{-1}$ is the conjugation by $B(a)$.
Further, by the definition
$a \circ b = a B(a) b B(a)^{-1}$.
So,
$$
(a \circ b)^{-1} a (a \circ b)
 = B(a) b^{-1} B(a)^{-1} a B(a) b B(a)^{-1}
 = a^{B(a) b B(a)^{-1}},
$$
and
\begin{multline*}
\lambda^{-1}_{\lambda_a(b)}((a \circ b)^{-1} a (a \circ b))
 = B(\lambda_a(b))^{-1}a^{B(a) b B(a)^{-1}} B(\lambda_a(b)) \\
 = a^{B(a) b B(a)^{-1} B(\lambda_a(b))}
 = a^{\lambda_a(b) B(\lambda_a(b))}. \qedhere
\end{multline*}
\end{proof}

Let $X$ be a non-empty set and $S \colon X \times X \to X \times X$
be a~non-degenerate set-theoretical solution to the Yang---Baxter equation.
Applying~\cite{GV2017}, one can construct a~skew left brace and
embed this skew left brace into a~Rota---Baxter group by Theorem~\ref{Embedding}.

\begin{question}
When can one define a Rota---Baxter group on~$X$ by~$S$
not involving skew left braces in the process?
\end{question}

Actually, set-theoretic solutions to the Yang---Baxter equation
were studied before V.G.~Drinfeld gave his examples~\cite{Drinfeld}.
D.~Joyce \cite{Joyce} and S.~Matveev~\cite{Matveev} introduced
quandles and racks as invariants of knots and links.

Recall the required definitions.

A \textit{rack} $R$ is a groupoid $(R, *)$  which satisfies the following two axioms:

(r1) the map $I_x:y\mapsto y*x$ is a bijection of $R$ for all $x\in R$,

(r2) $(x*{y})*z=(x*z)*({y*z})$ for all $x,y,z\in R$.

Axioms (r1) and (r2) imply that the map $I_x$ is an automorphism of $R$ for all $x\in R$.
The group $\Inn(R)=\langle I_x \mid x\in R\rangle$
is called the \textit{group of inner automorphisms} of $R$.
By~(r2), we have that $I_{x*y}=I_yI_xI_y^{-1}$ holds for all $x,y\in R$.

The group $\Inn(R)$ acts on~$R$ naturally. A~rack~$R$
which satisfies the additional axiom (q) $x*x=x$ for all $x\in R$
is called a \textit{quandle}. The simplest example of a~quandle
is the trivial quandle on a set $X$, that is the quandle $Q=(X,*)$,
where $x*y=x$ for all $x,y\in X$. A~lot of examples of quandles come from groups.

\begin{example}\label{conjugationex}
Let $G$~be a~group. Then the set $G$ under the product
$x*y=x^y = y^{-1}xy$ is a~quandle
called the {\it conjugation quandle} of the group $G$ and it is denoted by $\Conj(G)$.
\end{example}

A non-degenerate set-theoretic solution~$S$ to YBE, where $S(x,y)=(y,\tau_y(x))$,
is said to be of the {\it rack type} if the groupoid $(X, *)$, where $x*y = \tau_y(x)$ is a~rack.
If $(X,*)$ is a~quandle, we say that this solution is of the {\it quandle type}.
In particular, the permutation solution $P(x,y) = (y,x)$ has the quandle type.

The connection between racks and the solutions to YBE of the rack type
gives the following known result (see, for example, \cite{Joyce}).

\begin{proposition}
Let $X$ be a set. The map $S\colon X\times X\to X\times X$ defined as follows,
$S(x,y) = (y, \tau_y(x))$ is a~non-degenerate solution to YBE
if and only if $(X,*)$ is a rack, where $x*y = \tau_y(x)$.
\end{proposition}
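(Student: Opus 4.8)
The plan is to verify both directions by a single direct computation of the two sides of~\eqref{SYBE} on an arbitrary triple $(x,y,z)\in X\times X\times X$, reading off that equality of the two sides is equivalent to axiom~(r2), while non-degeneracy is equivalent to axiom~(r1). The two equivalences are logically independent, so I would treat them separately and then combine.

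First I would dispose of non-degeneracy. Writing $S$ in the standard form $S(x,y) = (\sigma_x(y),\tau_y(x))$, we have here $\sigma_x(y) = y$, so $\sigma_x = \id$ is automatically a bijection for every $x$. Hence $S$ is non-degenerate if and only if each $\tau_y$ is bijective. Since $\tau_y(x) = x*y = I_y(x)$, this is precisely axiom~(r1). Thus the non-degeneracy of~$S$ and axiom~(r1) are equivalent, regardless of the braid relation.

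Next I would compute the braid relation. The building blocks are $(S\times\id)(x,y,z) = (y,\,x*y,\,z)$ and $(\id\times S)(x,y,z) = (x,\,z,\,y*z)$. Applying the three factors of the left-hand side of~\eqref{SYBE} to $(x,y,z)$ in turn yields successively $(y,x*y,z)$, then $(y,z,(x*y)*z)$, then $(z,\,y*z,\,(x*y)*z)$; applying the three factors of the right-hand side yields successively $(x,z,y*z)$, then $(z,x*z,y*z)$, then $(z,\,y*z,\,(x*z)*(y*z))$. The first two coordinates coincide automatically, so~\eqref{SYBE} holds on every triple if and only if $(x*y)*z = (x*z)*(y*z)$ for all $x,y,z\in X$, which is exactly axiom~(r2).

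Combining the two equivalences, $S$ is a non-degenerate solution to the Yang--Baxter equation if and only if both (r1) and (r2) hold, that is, if and only if $(X,*)$ is a rack. The only point requiring care is the bookkeeping of which pair of coordinates each factor $S\times\id$ and $\id\times S$ acts upon; once the intermediate images are recorded correctly, the equivalence with~(r2) is immediate, so I do not expect any genuine obstacle here.
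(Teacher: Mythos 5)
Your proof is correct. The paper in fact gives no proof of this proposition at all --- it is stated as a known result with a reference to Joyce --- and your direct coordinate computation is exactly the standard argument: the two sides of~\eqref{SYBE} applied to $(x,y,z)$ agree in the first two coordinates automatically, and equality in the third coordinate is precisely $(x*y)*z=(x*z)*(y*z)$, i.e.\ axiom~(r2), while non-degeneracy reduces to axiom~(r1) since $\sigma_x=\id$. One small point worth making explicit: by the paper's definition a set-theoretic solution is required to be a \emph{bijective} map $S\colon X\times X\to X\times X$, and you only verify bijectivity of the component maps $\sigma_x$ and $\tau_y$. For a map of the triangular form $S(x,y)=(y,\tau_y(x))$ this costs nothing extra --- $S$ is bijective if and only if every $\tau_y$ is, with inverse given by $S^{-1}(u,v)=(\tau_u^{-1}(v),u)$ --- so bijectivity of $S$ is again equivalent to axiom~(r1); adding this one-line remark makes the argument fully complete.
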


\begin{definition}
Two non-degenerate solutions $S$ and $S'$ to YBE on~$X$
are called conjugate if there exists an invertable map
$T\colon X \times X \to X \times X$ such that $S' = TST^{-1}$.
\end{definition}

\begin{remark}
If $S(x,y) = (y, \tau_y(x))$ is a~solution of the rack type
on a~set~$X$, then $PSP(x,y) = (\tau_x(y), x)$ gives
a~non-degenerate solution on~$X$.
\end{remark}

\begin{proposition}[{\cite[Proposition 3.7]{GV2017}}] \label{GV}
Any solution to YBE constructed by a~skew left brace $(G,\cdot,\circ)$
by the formula~\eqref{BraceToYBE} is conjugate to the solution
$S'(a, b) = (b, a^b)$, $a, b \in G$, of the quandle type.
\end{proposition}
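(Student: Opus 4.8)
The plan is to exhibit an explicit invertible map $T\colon G\times G\to G\times G$ conjugating the brace solution~\eqref{BraceToYBE} to $S'$, and then verify the intertwining relation $T\circ S = S'\circ T$ by a short computation. Since $S'(a,b) = (b,a^b)$ is precisely the solution attached to the conjugation quandle $\Conj(G^{(\cdot)})$ (Example~\ref{conjugationex}), establishing this conjugacy simultaneously confirms that $S'$ is of the quandle type, as required.

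First I would take
$$
T\colon G\times G\to G\times G,\qquad T(a,b) = (a,\lambda_a(b)).
$$
This map is bijective: its inverse is $(a,c)\mapsto (a,\lambda_a^{-1}(c))$, using that each $\lambda_a$ is an automorphism of $G^{(\cdot)}$ with inverse given by~\eqref{lambda-inverse}. It therefore suffices to check $T\circ S = S'\circ T$, which yields $S' = TST^{-1}$ and hence the desired conjugacy. Writing $S(a,b) = (\lambda_a(b), w)$ with $w = \lambda^{-1}_{\lambda_a(b)}((a\circ b)^{-1}a(a\circ b))$, applying $T$ and using $\lambda_{\lambda_a(b)}(w) = (a\circ b)^{-1}a(a\circ b)$ gives
$$
T(S(a,b)) = \big(\lambda_a(b),\,(a\circ b)^{-1}a(a\circ b)\big).
$$
On the other side, $S'(T(a,b)) = S'(a,\lambda_a(b)) = \big(\lambda_a(b),\, a^{\lambda_a(b)}\big)$.

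To conclude, the two second coordinates must agree. Here I would use $a\circ b = a\lambda_a(b)$ together with the identity $a^a = a$ for conjugation in $G^{(\cdot)}$, so that
$$
(a\circ b)^{-1}a(a\circ b) = a^{\,a\lambda_a(b)} = (a^a)^{\lambda_a(b)} = a^{\lambda_a(b)}.
$$
Thus $T\circ S = S'\circ T$, completing the argument.

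The computation is essentially routine; the only genuine step is spotting the correct conjugating map $T(a,b)=(a,\lambda_a(b))$, which ``untwists'' the $\lambda$-factor built into the second coordinate of~\eqref{BraceToYBE}, and then observing that $a^a=a$ collapses the residual $(a\circ b)$-conjugation into a $\lambda_a(b)$-conjugation. I expect no serious obstacle beyond this: $T$ is manifestly invertible, so no separate injectivity or surjectivity discussion is needed, and the verification that $S'$ is of quandle type is immediate from Example~\ref{conjugationex}.
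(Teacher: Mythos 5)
Your proof is correct and follows essentially the same route as the paper: the paper cites \cite[Proposition 3.7]{GV2017} and immediately afterwards proves the more general statement for left non-degenerate solutions using exactly your conjugating map $T(x,y) = (x,\sigma_x(y))$ (here $\sigma_a = \lambda_a$), of which your argument is the specialization to the brace solution. Your verification, including the key collapse $(a\circ b)^{-1}a(a\circ b) = a^{a\lambda_a(b)} = a^{\lambda_a(b)}$ via $a\circ b = a\lambda_a(b)$, matches the intended computation and correctly yields $S' = TST^{-1}$ in the paper's sense of conjugacy.
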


Since we have constructed the solution to YBE by an RB-group via skew left braces,
we have stated the following result too.

\begin{corollary}
The solution~\eqref{RBE} is conjugate to the solution of the quandle type, $S'(a, b) = (b, a^b)$.
\end{corollary}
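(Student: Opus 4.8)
The plan is to recognize the solution~\eqref{RBE} as a special instance of the general skew left brace solution~\eqref{BraceToYBE} and then to apply Proposition~\ref{GV} directly. First I would recall that, by Proposition~\ref{RBToSLB}, the Rota---Baxter group $(G,\cdot,B)$ induces the skew left brace $G(B) = (G,\cdot,\circ_B)$ with $a\circ b = aB(a)bB(a)^{-1}$, whose lambda-map is $\lambda_a(b) = B(a)bB(a)^{-1}$, i.e.\ conjugation by $B(a)^{-1}$. The computation performed in the proof of the preceding theorem shows precisely that formula~\eqref{BraceToYBE}, when specialized to the brace $G(B)$, collapses to formula~\eqref{RBE}: the second coordinate $\lambda^{-1}_{\lambda_a(b)}((a\circ b)^{-1}a(a\circ b))$ is rewritten, using $a\circ b = aB(a)bB(a)^{-1}$ and the fact that $\lambda^{-1}_{c}$ is conjugation by $B(c)$, into $a^{\lambda_a(b)B(\lambda_a(b))}$. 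In other words, \eqref{RBE} is nothing but the Guarnieri---Vendramin solution attached to the induced brace $G(B)$.

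Once this identification is in hand, the conclusion is immediate. Proposition~\ref{GV} asserts that every solution of the form~\eqref{BraceToYBE} arising from a skew left brace is conjugate to the quandle-type solution $S'(a,b) = (b,a^b)$. Applying this statement to the brace $G(B)$ therefore yields exactly the assertion of the corollary, since the conjugating map $T$ provided by Proposition~\ref{GV} transforms~\eqref{RBE} into $S'$.

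There is essentially no genuine obstacle to overcome here, as the corollary follows by combining two facts already established in the excerpt: the theorem's explicit rewriting of~\eqref{RBE} and the general conjugacy result of Proposition~\ref{GV}. The only point demanding care is the first step---confirming that the second coordinate of~\eqref{RBE} really coincides with the second coordinate of~\eqref{BraceToYBE} for the brace $G(B)$---but this chain of equalities is precisely what the theorem's proof records, so it may be cited rather than redone. I would thus present the proof in two sentences: identify~\eqref{RBE} as the brace solution for $G(B)$, then invoke Proposition~\ref{GV}.
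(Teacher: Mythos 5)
Your proposal is correct and matches the paper's reasoning exactly: the paper derives the corollary by noting that the solution~\eqref{RBE} was obtained from the Rota---Baxter group via the induced skew left brace $G(B)$ in the form~\eqref{BraceToYBE}, so Proposition~\ref{GV} applies directly. The identification of the second coordinate that you cite is indeed the content of the preceding theorem's proof, so nothing further is needed.
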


In fact, A.~Soloviev~\cite[Theorem 2.3]{Solovev}
(see also L.~Guarnieri, L.~Vendramin~\cite[Proposition 3.7]{GV2017}
and D.~Bachiller \cite[Proposition 5.2]{Bach}) proved that any
non-degenerate solution is conjugate to a solution of the rack type.

A solution $S(x,y) = (\sigma_x(y), \tau_y(x))$ to YBE is called left non-degenerate if
the map $\sigma_x$ is a~bijection for every $x \in X$.

\begin{proposition}
If $S(x,y) = (\sigma_x(y), \tau_y(x))$, $x, y \in X$, gives
a~left non-degenerate solution to YBE on~$X$, then it conjugates
to a~solution of the form
$S'(x,y) = (y, \sigma_y(\tau_{\sigma^{-1}_x(y)}(x))$.
If for all $a, b \in X$ there exists a unique $x \in X$ such that
\begin{equation}\label{cond}
\tau_{\sigma^{-1}_x(a)}(x) = \sigma^{-1}_a(b),
\end{equation}
then this solution is of the rack type.
\end{proposition}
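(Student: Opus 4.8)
The plan is to build the conjugating map from the left action of~$S$ itself and then recognize the second component as a rack operation. First I would set $T(x,y) = (x,\sigma_x(y))$. Left non-degeneracy means every $\sigma_x$ is a bijection, so $T$ is invertible with $T^{-1}(x,y) = (x,\sigma_x^{-1}(y))$. A one-line computation then gives
$$
TST^{-1}(x,y)
 = T\bigl(S(x,\sigma_x^{-1}(y))\bigr)
 = T\bigl(y,\tau_{\sigma_x^{-1}(y)}(x)\bigr)
 = \bigl(y,\sigma_y(\tau_{\sigma_x^{-1}(y)}(x))\bigr),
$$
which is exactly the announced map~$S'$. Hence $S' = TST^{-1}$ with $T$ invertible, which yields the asserted conjugacy and the explicit form of~$S'$.

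Writing $x*y := \sigma_y(\tau_{\sigma_x^{-1}(y)}(x))$, so that $S'(x,y) = (y,x*y)$, the next step is to check that $S'$ solves the Yang---Baxter equation. For a map of this left-trivial shape the braid relation~\eqref{SYBE} simplifies considerably: evaluating both sides on a triple $(x,y,z)$ one gets $(z,\,y*z,\,(x*y)*z)$ on the left and $(z,\,y*z,\,(x*z)*(y*z))$ on the right, so~\eqref{SYBE} for~$S'$ is equivalent to the single self-distributivity identity $(x*y)*z = (x*z)*(y*z)$, i.e.\ to axiom~(r2). The hard part will be to deduce this identity for~$*$ from the braid relation satisfied by~$S$; note that conjugation by the triangular map~$T$ does \emph{not} transport the braid relation formally, since the $T$-factors fail to cancel on $X\times X\times X$, so one is forced to use that $T$ is assembled from the left action~$\sigma$ of~$S$ and that left non-degeneracy lets one invert the $\sigma$-factors. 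Concretely I would expand both sides of~(r2) by the definition of~$*$ and reduce them, using the component identities equivalent to~\eqref{SYBE} for~$S$, to one and the same expression; this is precisely the passage to the derived solution of Lu---Yan---Zhu and Soloviev~\cite{LYZ00,Solovev}, and it is the only genuinely computational ingredient of the argument.

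Finally I would determine when $S'$ is of rack type. Since the first component of $S'(x,y) = (y,x*y)$ is the identity in~$y$, the $\sigma$-part of~$S'$ is automatically bijective, so $S'$ is non-degenerate exactly when each map $x\mapsto x*a$ is a bijection. Now $x*a = b$ means $\sigma_a(\tau_{\sigma_x^{-1}(a)}(x)) = b$, i.e.\ $\tau_{\sigma_x^{-1}(a)}(x) = \sigma_a^{-1}(b)$, so the uniqueness hypothesis~\eqref{cond} says precisely that each $x\mapsto x*a$ is a bijection; that is,~\eqref{cond} is equivalent to axiom~(r1). Under~\eqref{cond} the solution~$S'$ is therefore non-degenerate, and by the characterization of non-degenerate left-trivial solutions recalled earlier, the groupoid $(X,*)$ is then a rack. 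Consequently $S'$ is a solution of the rack type, as claimed.
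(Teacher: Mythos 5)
Your proposal is correct and follows essentially the same route as the paper's proof: the same conjugating map $T(x,y)=(x,\sigma_x(y))$, the same one-line computation of $TST^{-1}$, the same appeal to Soloviev's Theorem~2.3 (the derived solution of Lu--Yan--Zhu and Soloviev) to guarantee that $S'$ is a solution satisfying axiom~(r2), and the same translation of axiom~(r1) into condition~\eqref{cond}. Your explicit remark that conjugation by a triangular $T$ does not formally transport the braid relation is a subtlety the paper silently resolves by the same citation, so it sharpens the exposition without changing the argument.
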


\begin{proof}
Take the map $T\colon X \times X \to X \times X$, that is defined by the rule
$T(x, y) = (x, \sigma_x(y))$, $x, y \in X$.
It has inverse $T^{-1}(x, y) = (x, \sigma^{-1}_x(y))$. Then
$$
T S T^{-1}(x, y)
 = TS(x, \sigma^{-1}_x(y))
 = T ( \sigma_x(\sigma^{-1}_x(y)),\tau_{\sigma^{-1}_x(y)}(x))
 = (y, \sigma_y(\tau_{\sigma^{-1}_x(y)}(x)).
$$
By \cite[Theorem 2.3]{Solovev}, this is a~solution and axiom~(r2) holds.
Further, this solution is of the rack type if and only if
the groupoid $(X,*)$ with the operation
$$
x * y = \sigma_y(\tau_{\sigma^{-1}_x(y)}(x))
$$
is a~rack. This operation satisfies~(r1) if and only if
for every $a, b \in X$ there exists a~unique $x \in X$ such that
$\sigma_a(\tau_{\sigma^{-1}_x(a)}(x)) = b$.
This condition is equivalent to~\eqref{cond}.
\end{proof}

\begin{proposition}
Let $(G, \cdot, \circ)$ be a~skew left brace and let $S_G$ be
the corresponding solution to YBE.
Suppose that $S_G$~is of the rack type,
then $G$ is the trivial skew left brace, and so
$S_G(x, y) = (y, x^y)$, $x, y \in G$.
\end{proposition}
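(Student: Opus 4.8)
The plan is to read the rack type hypothesis directly off the explicit form of the skew left brace solution~\eqref{BraceToYBE} and show that it forces the $\lambda$-map to be trivial. Recall that the solution $S_G$ attached to $(G,\cdot,\circ)$ has first component $\sigma_a(b) = \lambda_a(b)$, where $\lambda_a(b) = a^{-1}(a\circ b)$. By definition a non-degenerate solution is of the rack type exactly when it can be written in the form $S(x,y) = (y,\tau_y(x))$; that is, when its first coordinate map is trivial, $\sigma_a(b) = b$ for all $a,b\in G$. Hence the first step is simply to impose $\lambda_a(b) = b$ for all $a,b\in G$.

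Next I would translate this into a statement about the two group operations. Since $\lambda_a(b) = a^{-1}(a\circ b)$, the identity $\lambda_a(b) = b$ holding for all $a,b$ is equivalent to $a\circ b = a\cdot b$ for all $a,b\in G$, i.\,e.\ $\circ = \cdot$. This is precisely the first case in the definition of a trivial skew left brace, so $G$ is trivial.

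Finally, I would substitute $\lambda_a = \id$ and $\circ = \cdot$ back into~\eqref{BraceToYBE} to compute $S_G$ explicitly. Because $\lambda_{\lambda_a(b)} = \lambda_b = \id$, the second component reduces to
$$
(a\circ b)^{-1}a(a\circ b) = (ab)^{-1}a(ab) = b^{-1}ab = a^b,
$$
so that $S_G(x,y) = (y, x^y)$, as claimed.

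Each of these steps is a direct unwinding of definitions followed by a one-line group computation, so I do not expect any serious difficulty. The only point deserving care is the correct interpretation of the rack type condition: one must recognize that it amounts exactly to the vanishing of the nontrivial part of the first coordinate map $\sigma_a$, equivalently $\lambda_a = \id$ for every $a\in G$.
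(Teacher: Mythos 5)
Your proposal is correct and follows essentially the same route as the paper: compare the rack-type form $S_G(x,y)=(y,\tau_y(x))$ with the explicit formula~\eqref{BraceToYBE}, deduce $\lambda_a=\id$ for all $a$, hence $\circ=\cdot$, and then compute the second component to get $S_G(x,y)=(y,x^y)$. One tiny caveat on phrasing: the rack type condition is not \emph{exactly} that the first coordinate map is trivial (the definition also demands that $(X,*)$ with $x*y=\tau_y(x)$ be a rack), but your argument only uses the implication from rack type to $\sigma_a=\id$, so this does not affect correctness.
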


\begin{proof}
Since $S_G$ is of the rack type, then
$S_G(x, y) = (y, \tau_y(x))$, $x, y \in G$,
where $\tau_y \colon G \to G$ is a bijection for all $y \in G$.
Comparing this formula with~\eqref{BraceToYBE},
we deduce that $\lambda_x = \id$ for all $x \in G$.
Hence, $x \cdot y = x \circ y$ for all $x, y \in G$,
and $G$ is trivial. Moreover,
$S_G(x, y) = (y, x^y)$ for $x, y \in G$. \qedhere
\end{proof}

\begin{example}
The groupoid $(X,*)$, where $X = \mathbb{Z}$ and $y * x = y+1$, is a~rack.
Put $S\colon X \times X \to X \times X$ by $S(x, y) = (y+1, x)$.
Then the pair $(X, S)$ is a~non-degenerate non-involutory solution to YBE.
This solution is conjugate to the solution of the rack type.
Let us find a~skew left brace corresponding to~$(X, S)$.
For this we have to construct the structure group $G(X, S)$~\cite{GV2017}.
This group is generated by elements $x_i$, $i \in \mathbb{Z}$,
and is defined by the relations
$$
x _i \circ x_j = x_{j+1}\circ  x_i, \quad i,j \in \mathbb{Z}.
$$
These relations imply  that $x_i = x_j$ for all $i, j$.
In particular $G(X, S) = \langle x_0 \rangle$
is the infinite cyclic group but the natural map $X\to G(X,S)$ is not an embedding.
Hence, $\lambda_x = \id$ for all $x \in G(X, S)$,
and so the additive operation on $G(X, S)$ is defined by the rule
$$
x \cdot y = x \circ \lambda_x^{-1}(y) = x \circ y,
$$
Thus, we get the trivial brace which defines the trivial solution $S_G = P$ on $G(X,S)$.
\end{example}

For an arbitrary Rota---Baxter group $(G,\cdot,B)$ we can prove

\begin{proposition}
Let $(G,\cdot,B)$ be a~Rota---Baxter group. Then the map
$$
S(x,y) = (y,B(y)xB(y)^{-1}), \quad x,y\in G,
$$
defines a~non-degenerate solution to YBE if and only if
$(B(b)^{-1})^{B(c)} = B(b^{B(c)})$
holds for all $b, c \in G$.
\end{proposition}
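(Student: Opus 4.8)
The plan is to verify directly that the proposed map $S(x,y) = (y, B(y)xB(y)^{-1})$ satisfies the Yang---Baxter equation \eqref{SYBE}, and to track exactly which conditions on $B$ are forced. First I would observe that $S$ has the form $S(x,y) = (y, \tau_y(x))$ with $\tau_y(x) = B(y)xB(y)^{-1}$, so this is precisely a candidate solution of the rack type in the sense defined above. By the proposition on racks already stated in the excerpt, $S$ is a non-degenerate solution to YBE if and only if the groupoid $(G,*)$ with $x*y = \tau_y(x) = B(y)xB(y)^{-1}$ is a rack. Thus the whole problem reduces to checking the rack axioms (r1) and (r2) for this operation.

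Next I would dispose of axiom (r1): the map $I_y(x) = x*y = B(y)xB(y)^{-1}$ is conjugation by $B(y)$, which is always a bijection of $G$, so (r1) holds unconditionally and contributes no constraint. The content therefore lies entirely in the self-distributivity axiom (r2), namely $(x*y)*z = (x*z)*(y*z)$ for all $x,y,z \in G$. I would expand both sides using $u*v = B(v)uB(v)^{-1}$. The left-hand side becomes
$$
(x*y)*z = B(z)\bigl(B(y)xB(y)^{-1}\bigr)B(z)^{-1},
$$
while the right-hand side is
$$
(x*z)*(y*z) = B(y*z)\bigl(B(z)xB(z)^{-1}\bigr)B(y*z)^{-1},
$$
where $y*z = B(z)yB(z)^{-1} = y^{B(z)^{-1}}$ in the conjugation notation of the paper. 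Equating these for all $x$ reduces, after cancelling the common $x$-dependence, to an identity in $B$ alone: the two conjugating elements $B(z)B(y)$ and $B(y*z)B(z)$ must agree up to the centralizer of every $x$, i.e. they must be equal. This is where the stated condition $(B(b)^{-1})^{B(c)} = B(b^{B(c)})$ should emerge.

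The main obstacle, and the step requiring care, is the algebraic manipulation that rewrites the equality $B(z)B(y) = B(y*z)B(z)$ into the clean form claimed in the statement. Writing $b = y$, $c = z$ and using $y*z = y^{B(z)^{-1}} = B(z)yB(z)^{-1}$, the condition $B(z)B(y) = B\bigl(B(z)yB(z)^{-1}\bigr)B(z)$ rearranges to $B(z)B(y)B(z)^{-1} = B\bigl(y^{B(z)^{-1}}\bigr)$, i.e. $B(b)^{B(c)^{-1}}$ compared with $B(b^{B(c)})$; I would then use the Rota---Baxter identity \eqref{RB} to convert the left side into the desired expression involving $(B(b)^{-1})^{B(c)}$, taking care with inverses and with the direction of conjugation. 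I expect the bookkeeping between $B(c)$ and $B(c)^{-1}$ to be the only genuinely delicate point, and I would double-check it by specializing to the trivial operators $B_0$ and $B_{-1}$, where the condition should hold automatically and confirm the signs. Once this single identity is shown equivalent to (r2), the two directions of the ``if and only if'' follow immediately from the rack proposition, completing the proof.
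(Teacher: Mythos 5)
Your reduction to the rack axioms via the paper's rack--YBE proposition is the natural route (the paper itself states this proposition without proof, so there is nothing to compare against on that side), and your expansion of (r2) is correct: the left side is conjugation by $B(z)B(y)$ and the right side is conjugation by $B(y*z)B(z)$ with $y*z = B(z)yB(z)^{-1}$. But your claim that the two conjugating elements ``must be equal'' is too strong: equality of the two inner automorphisms for all $x$ forces only
$$
B(z)B(y) \equiv B\bigl(B(z)yB(z)^{-1}\bigr)B(z) \pmod{Z(G)},
$$
so the ``only if'' direction of your argument yields the identity only modulo the center, a point that genuinely matters in groups with nontrivial center and is silently skipped.

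The more serious problem is the step you defer as ``bookkeeping.'' Exact equality of the conjugators gives $B(y)^{B(z)^{-1}} = B\bigl(y^{B(z)^{-1}}\bigr)$ for all $y,z$ (note both sides carry the exponent $B(z)^{-1}$; your sketch already garbles one side to $B(b^{B(c)})$), and since a map commutes with an automorphism if and only if it commutes with its inverse, this is equivalent to $B(b)^{B(c)} = B\bigl(b^{B(c)}\bigr)$ for all $b,c$ --- that is, the statement's condition \emph{without} the inverse on $B(b)$. No application of \eqref{RB} can insert that inverse, because the two conditions are inequivalent within the class of Rota---Baxter operators: for $B_{-1}(g)=g^{-1}$ the map $S(x,y)=(y,y^{-1}xy)$ is the conjugation-quandle solution and $B(b)^{B(c)} = B(b^{B(c)})$ holds, whereas $(B(b)^{-1})^{B(c)} = B(b^{B(c)})$ reduces to $b \equiv b^{-1} \pmod{Z(G)}$, which fails already in $\mathbb{Z}_3$ or $S_3$. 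Ironically, the sanity check you propose (specializing to $B_{-1}$) would have exposed exactly this: the printed condition does not ``hold automatically'' there. So your plan cannot be completed toward the literal statement; what it can prove (modulo the center caveat above) is the version with $B(b)^{B(c)}$, and the discrepancy with the printed condition --- apparently a misplaced inverse in the statement --- has to be confronted head-on rather than treated as a delicate sign to be fixed by \eqref{RB}.
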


\section{Skew left multibraces}

The next definition gives a~generalization of skew left brace.

\begin{definition}
Let $k$ be a~natural number.
By {\it skew left $k$-brace} we call a $(k+1)$-groupoid
$(G, \circ_0, \circ_1, \ldots,\circ_k)$, i.\,e.,
a~non-empty set~$G$ with $k+1$ binary algebraic operations, such that

1) $(G, \circ_i)$ is a~group for all $i=0,1,\ldots,k$;

2) for $0 < i \leq k$
$$
a \circ_{i} (b \circ_{i-1} c)
 = (a \circ_{i} b) \circ_{i-1} a^{\circ_{i-1}(-1)} \circ_{i-1} (a \circ_{i} c),
$$
where $a^{\circ_{i-1}(-1)}$ is the inverse to $a$ in the group $(G, \circ_{i-1})$.
\end{definition}

A~skew left 1-brace is just the skew left brace.
By skew left multibrace we call a~skew left $k$-brace for some $k>1$.

\begin{proposition}
Let $(G, \cdot, B)$ be an RB-group and $k$ be a~natural number.
Define on the set~$G$ binary operations $\circ_1,\circ_2,\ldots,\circ_k$ as follows,
$$
x \circ_{i+1} y = x \circ_i B(x) \circ_i y \circ_i (B(x))^{\circ_i(-1)},
$$
where $\circ_0 = \cdot$. Then
$(G,\cdot,\circ_1, \circ_2,\ldots,\circ_k)$ is a~skew left $k$-brace.
\end{proposition}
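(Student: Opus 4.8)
The plan is to proceed by induction on the index $i$, showing that at each stage the pair of consecutive operations $(\circ_{i-1},\circ_i)$ forms a skew left brace, which immediately yields identity~2) of the definition. The key observation is that the construction is \emph{iterative} in exactly the way Proposition~\ref{prop:Derived} allows. First I would record the base case: by Proposition~\ref{prop:Derived}(b), $B$~remains a Rota---Baxter operator on the group $(G,\circ_1)$, where $\circ_1$ is defined from $\circ_0=\cdot$ by the Rota---Baxter product~\eqref{R-product}. This is the crucial structural fact that makes the induction run.

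For the inductive step, suppose we have shown that $B$ is a Rota---Baxter operator on $(G,\circ_{i-1})$ and that $(G,\circ_{i-1})$ is a group. The defining formula
$$
x \circ_{i} y = x \circ_{i-1} B(x) \circ_{i-1} y \circ_{i-1} (B(x))^{\circ_{i-1}(-1)}
$$
is precisely the Rota---Baxter product~\eqref{R-product} of the operator $B$ on the group $(G,\circ_{i-1})$. Hence by Proposition~\ref{prop:Derived}(a), $(G,\circ_i)$ is again a group, and by Proposition~\ref{RBToSLB} applied to the Rota---Baxter group $(G,\circ_{i-1},B)$, the bigroupoid $(G,\circ_{i-1},\circ_i)$ is a skew left brace. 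Rewriting the skew left brace identity~\eqref{LSbrace} with $\cdot$ replaced by $\circ_{i-1}$ and $\circ$ replaced by $\circ_i$ gives exactly identity~2) for that value of~$i$. To close the induction and pass to the next index, I invoke Proposition~\ref{prop:Derived}(b) once more: $B$ is a Rota---Baxter operator on the new group $(G,\circ_i)$, so the hypotheses are restored with $i$ increased by one.

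I expect the main obstacle to be purely bookkeeping rather than conceptual: one must verify that the single operator $B$, unchanged throughout, genuinely satisfies the Rota---Baxter identity~\eqref{RB} with respect to \emph{each} group structure $(G,\circ_i)$, and that the product $\circ_i$ built from $\circ_{i-1}$ by the brace formula coincides verbatim with the product~\eqref{R-product} for the pair $(\circ_{i-1},B)$. Both facts are delivered by Proposition~\ref{prop:Derived}, so the argument is a clean iteration of that proposition together with Proposition~\ref{RBToSLB}; the content of the proof is simply recognizing that the $k$-brace axioms decompose into $k$ independent two-consecutive-operation brace conditions, each of which is a prior result. Finally I would note that the case $k=1$ recovers Proposition~\ref{RBToSLB}, confirming that the statement specializes correctly.
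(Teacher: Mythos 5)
Your proof is correct and is essentially the paper's own argument: the paper's proof consists of the single remark that the claim follows by Proposition~\ref{prop:Derived}, which is precisely the iteration you spell out, with $B$ remaining a Rota---Baxter operator on each $(G,\circ_i)$ by Proposition~\ref{prop:Derived}(b) and Proposition~\ref{RBToSLB} supplying the brace identity for each consecutive pair $(\circ_{i-1},\circ_i)$. Your version simply makes the induction explicit.
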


\begin{proof}
It follows by Proposition~\ref{prop:Derived}.
\end{proof}

Given a skew left brace~$G$, denote by $a^{\circ(n)}$ the $n$th power of~$a$
under the product~$\circ$.

In~\cite{SV}, a~skew left brace $(S_3,\cdot,\circ)$ such that $(S_3,\cdot)$
is the symmetric group of order 6 and $(S_3,\circ)$ is the cyclic group of order 6
was constructed. The group $S_3$ is generated by two transpositions $s_1$ and $s_2$.
The following example shows that this skew left brace is
defined by a~Rota---Baxter group.

\begin{example}
a) Consider the skew left brace
$S_3(B_1) = (S_3, \cdot, \circ)$ that we can construct on~$S_3$
applying the splitting Rota---Baxter operator $B_1 \colon S_3 \to A_3$, which comes
from the decomposition $S_3 = \langle s_2 \rangle A_3$ if we put
$B_1(c a) = a^{-1}$, $c\in \langle s_2 \rangle$, $a \in A_3$, i.\,e.,
$$
B_1(s_1) = s_1 s_2,\quad
B_1(s_2) = e,\quad
B_1(s_1 s_2) = s_2 s_1, \quad
B_1(s_2 s_1) = s_1s_2, \quad
B_1(s_1 s_2 s_1) = s_2 s_1.
$$
Then the group $S_3^{(\circ)}$ is the cyclic group of order~6 with the generator $s_1$,
$$
s_1^{\circ (2)} = s_1 \circ s_1 = s_1 s_2,\quad
s_1^{\circ (3)} = s_2, \quad
s_1^{\circ (4)} = s_2 s_1, \quad
s_1^{\circ (5)} = s_1 s_2 s_1, \quad
s_1^{\circ (6)} = e.
$$
Thus, the operator $B_1$ is an endomorphism of $\mathbb{Z}_6$ acting as
$s_1\to s_1^{\circ (2)}$.

b) Consider the skew left brace $S_3(B_2) = (S_3, \cdot, \circ)$
that we can construct on $S_3$ applying the Rota---Baxter operator
$B_2 \colon S_3 \to \langle s_1 \rangle$:
$$
B_2(s_1) = s_1, \quad
B_2(s_2) = s_1, \quad
B_2(s_1 s_2 s_1) = s_1, \quad
B_2(s_2 s_1) = e, \quad
B_2(s_1 s_2) = e.
$$
Actually $B_2$ is the homomorphism from $S_3$ to its abelian subgroup $\langle s_1\rangle$,
see Proposition~\ref{prop:RB-Hom}b.
Note that the group $S_3^{(\circ)}$ is the cyclic group of order~6 with the generator~$s_2$,
$$
s_2^{\circ (2)} = s_1 s_2, \quad
s_3^{\circ (3)} = s_1, \quad
s_2^{\circ (4)} = s_2 s_1, \quad
s_5^{\circ (5)} = s_1 s_2 s_1, \quad
s_2^{\circ (6)} = e.
$$
\end{example}

This example shows that different Rota---Baxter operators
that are defined on the same group can give isomorphic skew left braces.

\begin{question}
Let $B$ and $B'$ are two Rota---Baxter operators on a~group~$G$.
Under which conditions skew left braces $G(B)$ and $G(B')$ are isomorphic?
\end{question}

\begin{example}
Above, we have considered the skew left brace $S_3(B_1) = (S_3, \cdot, \circ)$.
Also, we can define the skew left 2-brace $(S_3, \cdot, \circ_1, \circ_2)$
using the same operator $B_1$ and take $\circ_1 = \circ$.
Since $(S_3, \circ_1)$ is abelian, we have $\circ_2 = \circ_1$.
\end{example}

\section*{Acknowledgments}
Authors are grateful to I. Colazzo and L. Vendramin for their corrections and remarks about the results from~\S5.2.
Authors are also grateful to participants of the seminar ``\'{E}variste Galois'' 
at Novosibirsk State University for fruitful discussions 
and to the anonymous referee for the helpful remarks.

Valery G. Bardakov is supported by Ministry of Science and Higher Education of Russia
(agreement No. 075-02-2020-1479/1).

Vsevolod Gubarev is supported by Russian Science Foundation (project 21-11-00286).

The results of~\S5.1,~\S5.3,~\S6, and~\S7 are supported by Ministry of Science
and Higher Education of Russia (agreement No.~075-02-2020-1479/1),
while the results of~\S3,~\S4, and~\S5.2 are supported by Russian Science Foundation
(project 21-11-00286).

\bigskip

\noindent Valeriy G. Bardakov \\
Sobolev Institute of Mathematics \\
Acad. Koptyug ave. 4, 630090 Novosibirsk, Russia \\
Novosibirsk State University \\
Pirogova str. 2, 630090 Novosibirsk, Russia \\
Novosibirsk State Agrarian University \\
Dobrolyubova str., 160, 630039 Novosibirsk, Russia \\
Regional Scientific and Educational Mathematical Center of Tomsk State University \\
Lenin ave. 36, 634009 Tomsk, Russia \\
email: bardakov@math.nsc.ru

\medskip
\noindent Vsevolod Gubarev \\
Sobolev Institute of Mathematics \\
Novosibirsk State University \\
e-mail: wsewolod89@gmail.com

\end{document}